\newtheorem{theorem}{Theorem}[section]
\newtheorem{lemma}[theorem]{Lemma}
\newtheorem{proposition}[theorem]{Proposition}
\theoremstyle{definition}
\newtheorem{definition}[theorem]{Definition}
\newtheorem{assumption}[theorem]{Assumption}
\theoremstyle{remark}
\newtheorem{remark}[theorem]{Remark}
\numberwithin{equation}{section}
\title[A Markov process for an age-structured population]{A Markov process for an infinite age-structured population}
\author{Dominika Jasi\'nska}
\address{Instytut Matematyki, Uniwersytet Marii Curie-Sk{\l}odowskiej, 20-031 Lublin, Poland}
\email{jasdominika@wp.pl}
\author{Yuri Kozitsky}
\address{Instytut Matematyki, Uniwersytet Marii Curie-Sk{\l}odowskiej, 20-031 Lublin, Poland}
\email{jkozi@hektor.umcs.lublin.pl}
\keywords{Age-structured population; Fokker-Planck equation;
martingale problem; Markov processes; resolvent} \subjclass{60J25;
60J85; 92D25}
\begin{document}

\begin{abstract}
For an infinite system of particles arriving in and departing from a
habitat $X$ -- a locally compact Polish space with a positive Radon
measure $\chi$ -- a Markov process is constructed in an explicit
way. Along with its location $x\in X$, each particle is
characterized by age $\alpha\geq 0$ -- time since arriving. As the
state space one takes the set of marked configurations
$\widehat{\Gamma}$, equipped with a metric that makes it a complete
and separable metric space. The stochastic evolution of the system
is described by a Kolmogorov operator $L$, expressed through the
measure $\chi$ and a departure rate $m(x,\alpha)\geq 0$, and acting
on bounded continuous functions $F:\widehat{\Gamma}\to \mathds{R}$.
For this operator, we pose the martingale problem and show that it
has a unique solution, explicitly constructed in the paper. We also
prove that the corresponding process has a unique stationary state
and is temporarily egrodic if the rate of departure is separated
away from zero.
\end{abstract}
\maketitle

\section{Introduction}

In recent  years, the stochastic dynamics of age-structured
populations attract considerable attention, mostly due to their
various applications, see \cite{He,Fima,Koz1,bib7,Wang}. Finite
populations of this kind are much more mathematically tractable,
 in contrast to infinite ones
where only few results are known.  In the present work, we construct
a Markov process for  an individual-based model of an infinite
population of point entities that arrive in and depart from a
continuous habitat $X\subseteq\mathds{R}^d$.

Let $X$ be a locally compact Polish space and $\chi$ a positive
Radon measure defined thereon. $X$ will serve as the habitat for
possibly infinite populations of entities (particles) that arrive in
and depart from $X$ at random. Along with the location $x\in X$,
each entity is characterized by age $\alpha\in
[0+\infty)=:\mathds{R}_{+}$ -- time of its presence in the
population.  The probability that an entity appears at time $t$ in a
given compact $\Lambda \subset X$ is set to be $1 - \exp( - t
\chi(\Lambda))$. Similarly, the probability that the entity located
at $x$ departs at time $t$ is $1- \exp(-t m(x,\alpha))$, where
$\alpha$ is its age at the moment of departure. Since we do not
assume that $\chi(X)$ be finite, the total population can get
instantly infinite, even if it is finite in the initial state. That
is, our model is characterized by an arriving measure $\chi$ and a
departing rate $m(x,\alpha)$.

To describe microscopic states of the population we use so called
\emph{marked configurations} $\hat{\gamma}$, cf. \cite[Definition
9.1.II, page 3]{DV2}. Set
\begin{equation*}
  \Gamma = \{ \gamma \subset X: |\gamma \cap \Lambda |<\infty, \
  {\rm for} \ {\rm each} \ {\rm compact} \ \Lambda \subset X\},
\end{equation*}
where $|\gamma \cap \Lambda |$ is the number of the elements of
$\gamma$ contained in $\Lambda$. Let $\mathcal{A}$ be the set of
finite configurations $a =\{\alpha_1 , \dots , \alpha_n\}\subset
[0,+\infty)$, $n\in \mathds{N}_0$. This set -- equipped with a
metric that makes it a complete and locally compact metric space --
serves as the space of marks for our model. That is, for $x\in X$,
by $a(x)\in \mathcal{A}$ we mean the ages of particles located at
$x$. Then a marked configuration $\hat{\gamma}$ is the pair
$(\gamma, a)$ with $\gamma\in \Gamma$ and $a:X\to \mathcal{A}$. In
this case, we also write $\gamma=p(\hat{\gamma})$ and call $\gamma$
ground configuration for $\hat{\gamma}$. The set of all marked
configurations $\widehat{\Gamma}$ is equipped with a metric, in
which it is complete and separable. Elements of $\hat{\gamma}$ are
pairs $(x, \alpha)=:\hat{x}$. They can be enumerated in
$\hat{\gamma}$, which allows one to consider sums $\sum_{\hat{x}\in
\hat{\gamma}} g(\hat{x})$ for continuous compactly supported
functions $g(\hat{x}) = g(x,\alpha)$.

The evolution of the population is governed by the (backward)
Kolmogorov equation
\begin{equation}
  \label{KE}
  \frac{d}{dt} F_t = L F_t, \qquad F_{t}|_{t=0} = F_0,
\end{equation}
where $t$ is time, $F_t$ is a suitable (test) function of
$\hat{\gamma}$ and the Kolmogorov operator $L$  is given by the
following expression
\begin{eqnarray}
  \label{S1}
 (LF)(\hat{\gamma}) & = & \sum_{(x,\alpha)\in \hat{ \gamma}} \frac{\partial}{\partial \alpha} F(\hat{\gamma})
  + \sum_{(x,\alpha) \in\hat{\gamma}} m(x, \alpha) \left[F(\hat{\gamma} \setminus (x,\alpha)) - F(\hat{\gamma}) \right]
  \\[.2cm] \nonumber
 & + & \int_{X}  \left[ F(\hat{\gamma} \cup (x,0)) - F(\hat{\gamma}) \right] \chi( d x).
 \nonumber
\end{eqnarray}
Here the first term describes aging, the second one corresponds to
departing, whereas the last term describes the appearance of new
entities.  In (\ref{S1}) and and in the sequel, in expressions like
$\hat{\gamma}\cup \hat{x}$ we consider $\hat{x}=(x,\alpha)$ as a
singleton configuration. The model parameters are subject to the
following
\begin{assumption}
  \label{Jass}
The departure rate $X\times \mathds{R}_{+}\ni (x,\alpha)\mapsto
m(x,\alpha)\in \mathds{R}_{+}$ is continuous and bounded, i.e., such
that $m(x,\alpha) \leq m_*$ for some $m_*>0$ and all $(x,\alpha)$.
Moreover, there exits $\varkappa: [0,1]\to \mathds{R}_{+}$  such
that $\varkappa(\epsilon) \to 0$ as $\epsilon \to 0$ and the
following holds
\begin{equation}
  \label{vkpa}
  \forall x\in X \qquad |m(x, \alpha) - m(x,\alpha')|\leq \varkappa
  (|\alpha-\alpha'|), \quad |\alpha-\alpha'|\in (0,1).
\end{equation}
The arriving measure $\chi$ is just a positive Radon measure.
\end{assumption}
The result of the present work can be outlined as follows. We
introduce a Banach space $\mathcal{C}$  of bounded continuous
functions $F:\widehat{\Gamma} \to \mathds{R}$, in which we define
$L$ as a closed and densely defined linear operator that satisfies
the conditions of the Hille-Yosida theorem, and hence is the
generator of a $C_0$-semigroup $\{S(t)\}_{t\geq 0}$. Then the
solution of (\ref{KE}) is obtained in the form $F_t = S(t) F_0$. For
a class of functions $\mathcal{F}_\Theta$, $F_t$ corresponding to
$F_0 \in \mathcal{F}_\Theta$ is obtained in an explicit form. This
allows us to explicitly construct the corresponding Markov
transition function $p^{\hat{\gamma}}_t$ and obtain
finite-dimensional laws of a Markov process $\mathcal{X}$ with
values in $\widehat{\Gamma}$, which describes the stochastic
evolution of our model. Possible objects of this kind are specified
as stochastic processes that solve the martingale problem for $L$.
Then we show that this problem is well-posed, i.e., uniqueness
holds. The main ingredient of the proof here is showing that the
corresponding Fokker-Planck equation for $L$ has a unique solution,
which we do by employing the resolvent of $L$. Assuming that
$m(x,\alpha ) \geq m_0
>0$, we also show that the process $\mathcal{X}$ has a unique
stationary state, explicitly constructed in the paper, such that the
laws of $\mathcal{X}(t)$ weakly converge to this state as $t\to
+\infty$.

\section{Preliminaries}
In this work, we use the following standard notions and notations.
For a Polish (=separable and completely metrizable) space $E$, by
$\mathcal{B}(E)$ we denote the corresponding Borel $\sigma$-field;
$C_{\rm b}(E)$ (resp. $B_{\rm b}(E)$) stands for the set of all
bounded and continuous (resp. bounded and measurable) functions
$f:E\to \mathds{R}$. By $C_{\rm b}^{+}(E)$ we denote the set of
positive elements of $C_{\rm b}(E)$. A subset $C_{\rm cs}(E) \subset
C_{\rm b}(E)$ consists of continuous compactly supported functions.
A family of functions $\mathcal{F}$ is said to \emph{separate} the
points of $E$ if for each distinct $x,y\in E$, one finds $f\in
\mathcal{F}$ such that $f(x)\neq f(y)$. By $\sigma\mathcal{F}$ we
denote the smallest sub-field of $\mathcal{B}(E)$ such that each
$f\in \mathcal{F}$ is $\sigma\mathcal{F}$-measurable; by
$\mathcal{P}(E)$ we mean the set of all probability Borel measures
on $(E, \mathcal{B}(E))$. For a given measure $\mu$ and a suitable
function $f$, we write $\mu(f) = \int f d\mu$. For a sequence
$\{\mu_n\}_{n\in \mathds{N}}\subset \mathcal{P}(E)$, by writing
$\mu_n \Rightarrow \mu \in \mathcal{P}(E)$ we mean its weak
convergence, i.e., $\mu_n (f) \to \mu(f)$ for all $f\in C_{\rm
b}(E)$. A family of functions $\mathcal{F}$ is said to be
\emph{separating} if $\mu_1 (f) = \mu_2(f)$ holding for all $f\in
\mathcal{F}$ implies $\mu_1=\mu_2$ for any $\mu_1,\mu_2\in
\mathcal{P}(E)$. If $\mathcal{F}$ separates the points of $E$ and
its linear span is an algebra with respect to pointwise operations,
then it is separating, see \cite[Theorem 4.5, page 113]{EK}. A
family of functions $\mathcal{F}$ is said to be \emph{convergence
determining} if $\mu_n (f) \to \mu(f)$ holding for all $f\in
\mathcal{F}$ implies $\mu_n \Rightarrow \mu$. For a suitable
$A\subset E$, by $\mathds{1}_A$ we denote the indicator of $A$,
i.e., the functions such that $\mathds{1}_A(x)=1$ if $x\in A$ and
$\mathds{1}_A(x)=0$ otherwise.

\subsection{The space of marks}

As mentioned above, our populations dwell in a locally compact
Polish space $X$. By $\Lambda$ we always denote a compact subset of
$X$. Each population member is characterized by its compound trait
$\hat{x} = (x,\alpha)\in \widehat{X}=X \times \mathds{R}_{+}$. For a
function, $g:\widehat{X}\to \mathds{R}$, we use interchangeable
writings $g(\hat{x})$ and $g(x,\alpha)$. The space $\widehat{X}$ is
equipped with the product topology assuming that the topology of
$\mathds{R}_{+}$ be defined by the metric which we introduce now.
For $\alpha \geq 0$, set $\omega(\alpha) = \min\{\alpha;
1/\alpha\}$, and then
\begin{equation}
  \label{Qh}
{r}(\alpha,0) = \omega(\alpha), \quad   {r}(\alpha, \alpha') =
\min\{|\alpha-\alpha'|; \omega(\alpha) + \omega(\alpha')\},
\end{equation}
where $|\beta|$ is the usual absolute value of $\beta\in
\mathds{R}$.
\begin{proposition}
  \label{Qh1pn}
The above introduced ${r}$  is a metric and $(\mathds{R}_{+}, {r})$
is a compact metric space.
\end{proposition}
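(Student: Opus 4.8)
The plan is to verify the three metric axioms for $r$ and then to deduce compactness from sequential compactness. Nonnegativity and symmetry are immediate, since both $|\alpha-\alpha'|$ and $\omega(\alpha)+\omega(\alpha')$ are symmetric and nonnegative. For the identity of indiscernibles I would first record the elementary fact that $\omega(\alpha)=\min\{\alpha,1/\alpha\}$ vanishes precisely at $\alpha=0$; then $r(\alpha,\alpha')=0$ forces either $|\alpha-\alpha'|=0$ or $\omega(\alpha)=\omega(\alpha')=0$, and in either case $\alpha=\alpha'$.

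The crux is the triangle inequality $r(\alpha,\gamma)\le r(\alpha,\beta)+r(\beta,\gamma)$, and the auxiliary fact I would isolate first is that $\omega$ is $1$-Lipschitz: it equals $\alpha$ on $[0,1]$ and $1/\alpha$ on $[1,+\infty)$, with the two branches matching continuously at $\alpha=1$ and each having derivative of modulus at most $1$ (the modulus is exactly $1$ on the first branch and $1/\alpha^2\le 1$ on the second), so $|\omega(\alpha)-\omega(\beta)|\le|\alpha-\beta|$ for all $\alpha,\beta$. Granting this, I would split the proof according to which of the two competing expressions realises each of the minima defining $r(\alpha,\beta)$ and $r(\beta,\gamma)$. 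In the two \emph{pure} cases the estimate is direct: if both right-hand minima are realised by the difference terms, their sum dominates $|\alpha-\gamma|$ by the ordinary triangle inequality on $\mathds{R}$; if both are realised by the $\omega$-terms, their sum is $\omega(\alpha)+2\omega(\beta)+\omega(\gamma)\ge\omega(\alpha)+\omega(\gamma)$. In the two \emph{mixed} cases, say $r(\alpha,\beta)=|\alpha-\beta|$ and $r(\beta,\gamma)=\omega(\beta)+\omega(\gamma)$, the Lipschitz bound $\omega(\alpha)\le\omega(\beta)+|\alpha-\beta|$ is exactly what is needed to convert the sum $|\alpha-\beta|+\omega(\beta)+\omega(\gamma)$ into a lower bound by $\omega(\alpha)+\omega(\gamma)\ge r(\alpha,\gamma)$. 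I expect this mixed case to be the only genuine obstacle, since it is the one place where the interaction between the direct metric and the ``through the origin'' quantity $\omega(\alpha)+\omega(\gamma)$ must be controlled.

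For compactness the governing observation is that $\omega(\alpha)=1/\alpha\to 0$ as $\alpha\to+\infty$, so that $r(\alpha,0)=\omega(\alpha)\to 0$; thus in the $r$-topology arbitrarily large ages accumulate at the single point $0$. Using this I would prove sequential compactness directly. Given any sequence $\{\alpha_n\}\subset\mathds{R}_{+}$, either it has a subsequence bounded in the usual sense, in which case the Bolzano--Weierstrass theorem yields an ordinary limit $\alpha^*\in\mathds{R}_{+}$, and the pointwise bound $r(\cdot,\alpha^*)\le|\cdot-\alpha^*|$ upgrades this to $r$-convergence; or $\alpha_n\to+\infty$, in which case $r(\alpha_n,0)=\omega(\alpha_n)\to 0$ and the sequence $r$-converges to $0\in\mathds{R}_{+}$. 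Either way a convergent subsequence exists, so $(\mathds{R}_{+},r)$ is sequentially compact and hence, being metric, compact. An alternative route I could take is to verify total boundedness together with completeness: for given $\epsilon>0$ the tail $\{\alpha>1/\epsilon\}$ lies in a single $r$-ball of radius $\epsilon$ about $0$, while the remaining usual-compact interval $[0,1/\epsilon]$ is covered by finitely many $\epsilon$-intervals, each contained in an $r$-ball since $r\le|\cdot-\cdot|$ there.
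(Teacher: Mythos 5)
Your proposal is correct, and for the triangle inequality it takes a genuinely different and considerably shorter route than the paper. The paper relegates the verification of $r(\alpha_1,\alpha_2)\le r(\alpha_1,\alpha_3)+r(\alpha_2,\alpha_3)$ to a multi-page Appendix consisting of an exhaustive case analysis over the relative positions of the three points with respect to $1$ and to each other, repeatedly comparing quantities such as $\alpha-1/\alpha$ and $\ell(\alpha)=\alpha+1/\alpha$. You instead isolate the single structural fact that $\omega$ is $1$-Lipschitz for the ordinary metric (its two branches have slopes $1$ and $-1/\alpha^2$ with $\alpha\ge 1$, matching at $\alpha=1$), and then the four-way split according to which term realises each minimum disposes of everything: the two pure cases are immediate, and in the mixed case the bound $\omega(\alpha)\le\omega(\beta)+|\alpha-\beta|$ is precisely what converts $|\alpha-\beta|+\omega(\beta)+\omega(\gamma)$ into an upper bound for $\omega(\alpha)+\omega(\gamma)\ge r(\alpha,\gamma)$. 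This is the standard argument showing that $\min\{d(x,y),f(x)+f(y)\}$ is a metric whenever $f$ is $1$-Lipschitz for $d$, and it buys a proof that is both shorter and more transparent about why the construction works (large ages are being glued to the origin). For compactness you prove sequential compactness directly via the dichotomy bounded subsequence versus divergence to $+\infty$, while the paper proves completeness plus total boundedness; the content is essentially the same (the paper's completeness argument uses the identical dichotomy, and your closing remark reproduces its total-boundedness cover), so both routes are fine.
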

\begin{proof}
To prove the first part we just have to check the validity of the
triangle inequality
\begin{equation}
  \label{QH}
{r}(\alpha_1, \alpha_2) \leq {r}(\alpha_1, \alpha_3)+ {r}(\alpha_2,
\alpha_3).
\end{equation}
This technical exercise is made in Appendix. To prove the
compactness, we have to show that: (a) ${r}$ is complete; (b) the
space $(\mathds{R}_{+}, {r})$ is totally bounded. Assume that
$\{\alpha_n\}_{n\in \mathds{N}}\subset \mathds{R}_{+}$ is an
${r}$-Cauchy sequence. Here one may have the following
possibilities: (i) there exists $\bar{\alpha}< \infty$ such that
$\alpha_n \leq\bar{ \alpha}$, $n\in \mathds{N}$; (ii) the considered
sequence contains a subsequence that diverges in the usual sense. In
case (i), $\{\alpha_n\}_{n\in \mathds{N}}$ contains a subsequence,
say $\{\alpha_{n_k}\}_{k\in \mathds{N}}$, such that $|\alpha_{n_k} -
\alpha_*|\to 0$ as $k\to +\infty$ for some $\alpha_* \leq
\bar{\alpha}$. At the same time, for $\varepsilon < 2/\bar{\alpha}$,
${r}(\alpha_n,\alpha_m) < \varepsilon$ implies $|\alpha_n-\alpha_m|
< \varepsilon$, see (\ref{Qh}), which means that ${r}(\alpha_n,
\alpha_*) \to 0$ as $n\to +\infty$. In case (ii), the divergent
subsequence converges in ${r}$ to zero, which implies that the whole
sequence converges to zero in ${r}$. Hence, the latter metric is
complete. To prove (b), we set $\tilde{B}_\varepsilon
(\alpha)=\{\alpha' \in \mathds{R}_{+}:
{r}(\alpha,\alpha')<\varepsilon\}$. Fix $\varepsilon\in(0,1)$ and
take the least $k\in \mathds{N}$ such that $k+1> 1/\varepsilon^2$.
Then $\mathds{R}_{+} =\cup_{j=0}^k \tilde{B}_\varepsilon
(j\varepsilon)$, which yields the property in question.
\end{proof}
Let us now compare $r$ with the absolute-value metrics of
$\mathds{R}_{+}$. By $C(\mathds{R}_{+},\mathcal{T}_r)$ we will mean
the sets of all bounded $r$-continuous functions, whereas $C_{\rm
b}(\mathds{R}_{+},\mathcal{T}_{|\cdot|})$ is going to stand for the
set of all bounded $|\cdot|$-continuous functions.
\begin{proposition}
  \label{Qh2pn}
$\mathcal{T}_{{r}}$ is coarser than $\mathcal{T}_{|\cdot|}$, and
hence the embedding $(\mathds{R}_{+},\mathcal{T}_{|\cdot|})
\hookrightarrow (\mathds{R}_{+},\mathcal{T}_{{r}})$ is continuous,
whereas both latter topological spaces are Borel isomorphic.
Moreover,
\begin{equation}
  \label{Th0}
C(\mathds{R}_{+},\mathcal{T}_{{r}})=\{u\in C_{\rm
b}(\mathds{R}_{+},\mathcal{T}_{|\cdot|}): \lim_{\alpha\to +\infty}
u(\alpha) = u(0)\}.
\end{equation}
\end{proposition}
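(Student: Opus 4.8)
The plan is to dispatch the three claims separately: the comparison of topologies is immediate, the Borel isomorphism is a soft consequence of descriptive set theory, and the description \eqref{Th0} of the $r$-continuous functions carries the actual work. To see that $\mathcal{T}_{r}$ is coarser than $\mathcal{T}_{|\cdot|}$ I would only record the trivial bound $r(\alpha,\alpha') \le |\alpha-\alpha'|$, which holds because in \eqref{Qh} the quantity $r(\alpha,\alpha')$ is a minimum one of whose entries is $|\alpha-\alpha'|$. Hence the identity map from $(\mathds{R}_{+},\mathcal{T}_{|\cdot|})$ to $(\mathds{R}_{+},\mathcal{T}_{r})$ is $1$-Lipschitz, so continuous; equivalently, every $r$-open set is $|\cdot|$-open, which is exactly $\mathcal{T}_{r}\subseteq\mathcal{T}_{|\cdot|}$ together with continuity of the stated embedding.

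For the Borel isomorphism I would argue as follows. The space $(\mathds{R}_{+},\mathcal{T}_{|\cdot|})$ is Polish, and $(\mathds{R}_{+},\mathcal{T}_{r})$ is a compact metric space by Proposition \ref{Qh1pn}, hence Polish as well. The identity is a continuous bijection between these two Polish spaces, so by the Lusin--Souslin theorem (a continuous injection between Polish spaces maps Borel sets to Borel sets) it carries $\mathcal{B}(\mathcal{T}_{|\cdot|})$ into $\mathcal{B}(\mathcal{T}_{r})$; combined with the reverse inclusion already granted by continuity, this gives $\mathcal{B}(\mathcal{T}_{r})=\mathcal{B}(\mathcal{T}_{|\cdot|})$ and shows the identity to be a Borel isomorphism. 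One may instead verify the equality of $\sigma$-fields by hand, using that $r$ and $|\cdot|$ induce the same topology locally on $(0,+\infty)$.

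For \eqref{Th0} I would prove two inclusions, working with sequential continuity since every space in sight is metric. For $\subseteq$: if $u$ is $r$-continuous, it is $|\cdot|$-continuous by the first part and bounded by compactness of $(\mathds{R}_{+},r)$; moreover $r(\alpha,0)=\omega(\alpha)=1/\alpha\to 0$ as $\alpha\to+\infty$, so continuity at $0$ in $\mathcal{T}_{r}$ forces $u(\alpha)\to u(0)$, which is the limit condition. For $\supseteq$: given a bounded $|\cdot|$-continuous $u$ with $\lim_{\alpha\to+\infty}u(\alpha)=u(0)$, I would check $r$-continuity at each $\alpha_{0}$. At $\alpha_{0}>0$ one has $\omega(\alpha_{0})>0$, so as soon as $r(\alpha_{n},\alpha_{0})<\omega(\alpha_{0})$ the minimum in \eqref{Qh} must be realized by $|\alpha_{n}-\alpha_{0}|$; thus $r(\alpha_{n},\alpha_{0})\to 0$ yields $|\alpha_{n}-\alpha_{0}|\to 0$ and $|\cdot|$-continuity concludes. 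At $\alpha_{0}=0$ one has $r(\alpha_{n},0)=\omega(\alpha_{n})=\min\{\alpha_{n},1/\alpha_{n}\}\to 0$, so each $\alpha_{n}$ is eventually either near $0$ or large, and the two hypotheses combine to give $u(\alpha_{n})\to u(0)$.

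The main obstacle is this last point, namely $r$-continuity at $\alpha_{0}=0$: an $r$-neighborhood of $0$ simultaneously collects the small and the large values of $\alpha$, so one cannot conclude from $|\cdot|$-continuity alone. This is precisely where the boundary condition $\lim_{\alpha\to+\infty}u(\alpha)=u(0)$ is indispensable, and the care lies in choosing a threshold $\eta$ so that $\omega(\alpha)<\eta$ forces $\alpha$ into one of the two controlled regimes, i.e. either $\alpha<\eta$ or $\alpha>1/\eta$.
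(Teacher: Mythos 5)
Your proposal is correct and follows essentially the same route as the paper: the coarseness via $r(\alpha,\alpha')\leq|\alpha-\alpha'|$ is the paper's observation that $|\cdot|$-convergence implies $r$-convergence, your case analysis at $\alpha_0>0$ versus $\alpha_0=0$ is exactly the paper's dichotomy that an $r$-convergent sequence either converges in $|\cdot|$ to the same nonzero limit or accumulates at $0$ and $\infty$, and your appeal to Lusin--Souslin is the paper's appeal to Kuratowski's theorem. The only difference is that you spell out in full what the paper leaves as a sketch.
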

\begin{proof}
The validity of the first statement and (\ref{Th0}) readily follows
by the fact that each $|\cdot|$-convergent sequence is also
${r}$-convergent, and each ${r}$-convergent sequence either
 converges in $|\cdot|$ to the same limit $\alpha \neq 0$, or has two
$|\cdot|$-accumulation points: $0$ and $\infty$. Since the mentioned
embedding is continuous and injective, it is also Borel-measurable.
By Kuratowski's theorem \cite{Part}, its inverse is also measurable
and thus is the isomorphism in question. This, in particular, means
that the corresponding Borel $\sigma$-fields coincide.
\end{proof}
\begin{definition}
  \label{Ju1df}
For a suitable $u \in C (\mathds{R}_{+}, \mathcal{T}_{r})$, we
introduce the map $\mathds{R}_{+} \ni \alpha \mapsto u'(\alpha)$
meaning the usual derivative if $\alpha >0$, and the right-hand side
one if $\alpha=0$. A given $u$ is said to be continuously
differentiable (on $\mathds{R}_{+}$) if $u'\in C (\mathds{R}_{+},
\mathcal{T}_{r})$.
\end{definition}
Let us consider the following functions
\begin{equation}
  \label{uM1}
u_n(\alpha) = \frac{\alpha^2}{1+n \alpha^3}, \qquad \alpha\in
\mathds{R}_{+}, \quad n\in \mathds{N}.
\end{equation}
It is clear that: (a) each $u_n$ is continuously differentiable, see
Definition \ref{Ju1df}; (b) $u_n$ is decreasing for
$\alpha>(2/n)^{1/3}$. Moreover, $u_n(\alpha) \leq 2^{2/3}/3 n^{2/3}$
and
\begin{equation}
  \label{uM1a}
u_n'(\alpha) = \frac{2\alpha - n\alpha^4}{(1+n \alpha^3)^2}, \qquad
\left|u_n'(\alpha) \right| \leq c/n^{1/3},
\end{equation}
the latter holding for some $c>0$ and all $\alpha\geq 0$.

Now let $\{\sigma_k\}_{k\in \mathds{N}}=:\varSigma\subset
[0,+\infty)$ be countable and such that: (i) $\sigma_1 =0$; (ii)
$\sigma_k< \sigma_{k+1}$ for all $k\in \mathds{N}$; $\sigma_k \to
\bar{\sigma}<\infty$ as $n\to +\infty$. Next, for $k,n\in
\mathds{N}$, we set
\begin{equation}
  \label{uM2}
  w_{k,n}(\alpha) = e^{-\sigma_k u_n(\alpha)}.
\end{equation}
Then $w_{k,n}$ is continuously differentiable and the following
holds
\begin{equation}
  \label{uM2z}
  \left|w'_{k,n}(\alpha)\right| \leq \frac{\bar{\sigma} c}{n^{1/3}}
  w_{k,n}(\alpha),
\end{equation}
where $c$ is the same as in (\ref{uM1a}).

Now let $a$ be a finite collection of points $\alpha_l\in
\mathds{R}_{+}$. That is, $a=\{\alpha_l\}_{1\leq l\leq m}$,
$\alpha_l\leq \alpha_{l+1}$ for all $l$. For $\alpha\in a$, by
$n_a(\alpha)\in \mathds{N}$ we will denote the multiplicity of
$\alpha$ in $a$, i.e., the number of elements of $a$ coinciding with
this $\alpha$. We extend it to all $\alpha\geq 0$ by setting
$n_a(\alpha)=0$ whenever $\alpha$ is not in $a$. Two such $a$ and
$a'$ are equal if they consist of exactly the same elements, with
the same multiplicities.
\begin{proposition}
  \label{uM1pn}
 Let $a$ and $a'$ be as just described. Then they are equal
if
\begin{equation}
   \label{uM3}
 \sum_{\alpha \in a} w_{k,n}(\alpha) = \sum_{\alpha \in a'}
 w_{k,n}(\alpha),
 \end{equation}
 holding for all $k,n\in \mathds{N}$.
 \end{proposition}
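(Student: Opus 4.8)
The plan is to show that the two-parameter family $\{w_{k,n}\}$ separates finite multisets, by first reducing the identity (\ref{uM3}) to a statement about linear independence of exponentials and then exploiting the shape of $u_n$. Let $\beta_1 < \beta_2 < \cdots < \beta_p$ enumerate the distinct points occurring in $a\cup a'$, and let $m_i$ and $m_i'$ be the multiplicities of $\beta_i$ in $a$ and $a'$, respectively. Setting $d_i = m_i - m_i' \in \mathds{Z}$, the hypothesis (\ref{uM3}) becomes
\begin{equation*}
  \sum_{i=1}^p d_i\, e^{-\sigma_k u_n(\beta_i)} = 0, \qquad \text{for all } k,n\in\mathds{N}.
\end{equation*}
It then suffices to prove that all $d_i=0$, since this says precisely that $a=a'$.

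First I would fix $n$ and regard the left-hand side as a function of a continuous parameter $\sigma\geq 0$, namely $g_n(\sigma) = \sum_{i=1}^p d_i\, e^{-\sigma u_n(\beta_i)}$. As a finite combination of exponentials with real exponents $u_n(\beta_i)\geq 0$, the function $g_n$ extends to an entire function of $\sigma$, and by hypothesis it vanishes at every $\sigma_k$. Since by construction the $\sigma_k$ are distinct and $\sigma_k\to\bar\sigma<\infty$, their set of values has an accumulation point, so the identity theorem forces $g_n\equiv 0$. Hence, for each fixed $n$,
\begin{equation*}
  \sum_{i=1}^p d_i\, e^{-\sigma u_n(\beta_i)} = 0, \qquad \text{for all } \sigma\geq 0.
\end{equation*}

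The remaining point is that the exponents $u_n(\beta_i)$ need not be distinct, because $u_n$ is only unimodal: by property (b) following (\ref{uM1}) it increases on $[0,(2/n)^{1/3}]$ and decreases afterwards. This is exactly where the freedom in $n$ is used. If some $\beta_i$ is positive, let $\beta_{j_0}$ be the smallest such and choose $n$ so large that $(2/n)^{1/3}<\beta_{j_0}$ (if all $\beta_i$ vanish, then $p=1$ and the claim is immediate). Then $\beta_{j_0}<\cdots<\beta_p$ all lie in the region where $u_n$ is strictly decreasing, so $u_n(\beta_{j_0})>\cdots>u_n(\beta_p)>0$, while $u_n(0)=0$ in case $\beta_1=0$; in every case the numbers $u_n(\beta_1),\dots,u_n(\beta_p)$ are pairwise distinct. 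The functions $\sigma\mapsto e^{-\sigma u_n(\beta_i)}$ with distinct exponents are linearly independent over $\mathds{R}$, so $g_n\equiv 0$ yields $d_i=0$ for every $i$, completing the argument.

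The step I expect to be the main obstacle is precisely handling the non-injectivity of $u_n$: a single $w_{k,n}$ cannot distinguish a point on the increasing branch of $u_n$ from its partner on the decreasing branch, so the two-parameter family is genuinely needed. The index $k$ (through analyticity in $\sigma$ and the accumulation of $\{\sigma_k\}$) reduces matters to an equality of exponential sums, and only then does choosing $n$ large restore injectivity of $u_n$ on the finite support $\{\beta_i\}$.
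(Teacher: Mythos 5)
Your proof is correct and follows essentially the same route as the paper: analyticity in $\sigma$ together with the accumulation of $\{\sigma_k\}$ at $\bar\sigma$ upgrades the hypothesis from $\varSigma$ to all real $\sigma$, and taking $n$ large enough makes $u_n$ injective on the finite support so that the exponents become pairwise distinct. The only (cosmetic) difference is the final step, where you invoke linear independence of exponentials with distinct exponents, while the paper extracts the multiplicities one at a time via the limits $\zeta\to+\infty$ and $\zeta\to-\infty$.
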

\begin{proof}
For $a$ as above and $\zeta \in \mathds{C}$, consider
\begin{equation*}
  f_{n,a}(\zeta) = \sum_{\alpha\in a} \exp\left( - \zeta
  u_n(\alpha)\right), \qquad n\in \mathds{N}.
\end{equation*}
Each such $f$ is an exponential type entire function. By (\ref{uM2})
and (\ref{uM3}) we have that $(f_{n,a}- f_{n,a'})|_{\varSigma}=0$,
holding for all $n\in \mathds{N}$. Since $\varSigma$ has a limiting
point, this implies $f_{n,a}(\zeta)= f_{n,a'}(\zeta)$ for all $\zeta
\in \mathds{R}$ and $n\in  \mathds{N}$. Obviously, $\lim_{\zeta \to
+\infty} f_{n,a}(\zeta)=n_a(0)$, where $n_a(0)\geq 0$ is the
multiplicity of $\alpha =0$ in $a$. Then the just mentioned equality
yields $n_a(0)=n_{a'}(0)$, and also
\begin{equation}
  \label{uM5}
   \sum_{\alpha\in a\setminus \{0\}} \exp\left( - \zeta
  u_n(\alpha)\right)= \sum_{\alpha\in a'\setminus \{0\}} \exp\left( - \zeta
  u_n(\alpha)\right).
\end{equation}
Let $\alpha_*$ and $\alpha'_*$ be the least positive elements of $a$
and $a'$, respectively. Take $n> 2/\alpha^3_{\flat}$,
$\alpha_\flat:= \min\{\alpha_*; \alpha'_*\}$. Then, for such $n$ and
all $\alpha>\alpha_\flat$, one has $u_n(\alpha_\flat) >
u_n(\alpha)$. Now we multiply both sides of (\ref{uM5}) by $e^{\zeta
u_n(\alpha_\flat)}$ and pass to the limit $\zeta \to -\infty$. This
yields that $\alpha_* = \alpha'_*$ and $n_a(\alpha_*) =
n_{a'}(\alpha'_*)$. Thereafter, we subtract the coinciding terms
from both sides of (\ref{uM5}) and proceed to comparing the
remaining least elements of $a$ and $a'$. This eventually yields the
equality to be proved.
\end{proof}
Let $\mathcal{A}$ be the set of all $a=\{\alpha_l\}_{1\leq l \leq
m}$, $m\in \mathds{N}_0$, $0\leq \alpha_1 \leq \alpha_2\leq \cdots
\leq \alpha_m$. Define
\begin{gather}
  \label{uM6}
  \rho(a, a') = \sum_{k, n\in
  \mathds{N}}\frac{2^{-k-n} \rho_{k,n}(a, a')}{1+ \rho_{k,n}(a,
  a')}, \\[.2cm] \nonumber \rho_{k,n}(a, a'):=
  \left|\sum_{\alpha \in a} w_{k,n}(\alpha) - \sum_{\alpha \in a'} w_{k,n}(\alpha)
  \right|.
\end{gather}
By Proposition \ref{uM1pn} it follows that $\rho$ is a metric on
$\mathcal{A}$. Each $a\in \mathcal{A}$ can be considered as a finite
counting measure defined on the compact space $(\mathds{R}_{+}, r)$,
for which $a(\Delta) = \sum_{\alpha\in a} \mathds{1}_\Delta (a)=
|a\cap\Delta|$, holding for all Borel subsets $\Delta$. The weak
topology of $\mathcal{A}$ is defined as the coarsest topology that
makes continuous all the maps $a\mapsto \sum_{\alpha\in a}w(a)$,
$w\in C(\mathds{R}_{+},\mathcal{T}_r)$. In the weak topology,
$\mathcal{A}$ is a closed subset of the space of all finite positive
measures on $(\mathds{R}_{+},\mathcal{T}_r)$.
\begin{proposition}
  \label{uM2pn}
$(\mathcal{A},\rho)$ is a complete metric space. The corresponding
metric topology coincides with the weak topology that turns
$\mathcal{A}$ into a locally compact Polish space.
\end{proposition}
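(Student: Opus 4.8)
The plan is to break the statement into three tasks: (i) $\rho$ is a metric; (ii) $\mathcal{A}$ splits into compact clopen ``layers'' indexed by total mass, which already yields local compactness; (iii) the metric topology $\mathcal{T}_\rho$ coincides with the weak topology $\mathcal{T}_w$, and $(\mathcal{A},\rho)$ is complete and separable. Throughout I would exploit the structural fact that, by condition (i) on $\varSigma$, one has $\sigma_1=0$, so that $w_{1,n}\equiv 1$ and hence $\sum_{\alpha\in a}w_{1,n}(\alpha)=|a|$, the total number of points of $a$ counted with multiplicity; consequently $\rho_{k,n}(a,a')=\bigl||a|-|a'|\bigr|$ for $k=1$ and every $n$.

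For (i), symmetry of $\rho$ and the fact that each $\rho_{k,n}$ is a pseudometric are immediate from (\ref{uM6}) (each $\rho_{k,n}$ is the pullback of the metric of $\mathds{R}$ under $a\mapsto\sum_{\alpha\in a}w_{k,n}(\alpha)$); since $t\mapsto t/(1+t)$ is nondecreasing, concave and vanishes at $0$, it is subadditive, so the weighted series in (\ref{uM6}) is again a pseudometric. Its non-degeneracy, $\rho(a,a')=0\Rightarrow a=a'$, is exactly Proposition \ref{uM1pn}. Thus $\rho$ is a metric. For (ii), I set $\mathcal{A}_m:=\{a\in\mathcal{A}:|a|=m\}$. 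The map $a\mapsto|a|$ is $\rho$-continuous by the remark above, and also $\mathcal{T}_w$-continuous (test against the constant $1\in C(\mathds{R}_{+},\mathcal{T}_r)$); being integer-valued, it makes each $\mathcal{A}_m$ clopen in both topologies, with $\mathcal{A}=\bigsqcup_{m\geq 0}\mathcal{A}_m$. By Proposition \ref{Qh1pn} the product $(\mathds{R}_{+},r)^m$ is compact, and the symmetrization map $(\alpha_1,\dots,\alpha_m)\mapsto\{\alpha_1,\dots,\alpha_m\}$ onto $\mathcal{A}_m$ is continuous for both $\mathcal{T}_\rho$ and $\mathcal{T}_w$, since each $w_{k,n}$, respectively each $w\in C(\mathds{R}_{+},\mathcal{T}_r)$, is $r$-continuous. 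Hence $\mathcal{A}_m$ is compact in both topologies, and every point of $\mathcal{A}$ lies in the compact clopen neighbourhood $\mathcal{A}_m$; this gives local compactness.

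For (iii), I first note $\mathcal{T}_\rho\subseteq\mathcal{T}_w$, because the maps $a\mapsto\sum_{\alpha\in a}w_{k,n}(\alpha)$ generating $\mathcal{T}_\rho$ are among those defining $\mathcal{T}_w$ (recall $w_{k,n}\in C(\mathds{R}_{+},\mathcal{T}_r)$ by (\ref{Th0})). The reverse inclusion I would obtain layer by layer via the compact-to-Hausdorff argument: the identity map $(\mathcal{A}_m,\mathcal{T}_w)\to(\mathcal{A}_m,\mathcal{T}_\rho)$ is a continuous bijection (continuous since $\mathcal{T}_\rho\subseteq\mathcal{T}_w$) from a compact space onto a metric, hence Hausdorff, space, and is therefore a homeomorphism. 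Thus $\mathcal{T}_w$ and $\mathcal{T}_\rho$ agree on each clopen $\mathcal{A}_m$, and since a subset of $\mathcal{A}$ is open in either topology iff its trace on every $\mathcal{A}_m$ is open there, the two topologies coincide on all of $\mathcal{A}$. Completeness then follows from the layered structure: the $k=1$ part of (\ref{uM6}) bounds a strictly increasing function of $\bigl||a|-|a'|\bigr|$ from below, so a $\rho$-Cauchy sequence has eventually constant (integer) mass and hence lies ultimately in a single compact, thus complete, $\mathcal{A}_m$, where it converges. Separability follows since each compact metric $\mathcal{A}_m$ is separable and $\mathcal{A}$ is their countable union. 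Together with local compactness this shows $(\mathcal{A},\rho)$ is a locally compact Polish space whose topology is the weak one.

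The step I expect to be the main obstacle is (iii), namely showing that the countable family $\{w_{k,n}\}$ already generates the full weak topology. The compactness of $(\mathds{R}_{+},r)$ from Proposition \ref{Qh1pn} and the point-separating property encoded in Proposition \ref{uM1pn} are exactly what make the compact-to-Hausdorff trick applicable on each layer, bypassing a more delicate direct verification that $\rho$-convergence forces convergence of $\sum_{\alpha\in a}w(\alpha)$ against every $w\in C(\mathds{R}_{+},\mathcal{T}_r)$.
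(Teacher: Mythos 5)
Your proof is correct, but it reaches the conclusion by a genuinely different route than the paper. The paper's proof first observes that weak convergence implies $\rho$-convergence (since each $w_{k,n}\in C(\mathds{R}_{+},\mathcal{T}_r)$), then takes a $\rho$-Cauchy sequence, uses $\sigma_1=0$ to stabilize the cardinalities, and invokes Prohorov's theorem on the compact space $(\mathds{R}_{+},\mathcal{T}_r)$ to extract a weakly convergent subsequence whose limit lies in $\mathcal{A}$ because $\mathcal{A}$ is weakly closed in the space of finite positive measures (a fact stated just before the proposition); completeness, the coincidence of the topologies, separability and local compactness are then asserted rather compactly. You share the crucial $\sigma_1=0$ cardinality-stabilization step, but replace the measure-theoretic compactness argument by a structural one: the clopen decomposition $\mathcal{A}=\bigsqcup_m\mathcal{A}_m$, compactness of each layer as a continuous image of $(\mathds{R}_{+},r)^m$ under symmetrization, and the compact-to-Hausdorff lemma to identify $\mathcal{T}_w$ with $\mathcal{T}_\rho$ layer by layer. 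What your route buys is self-containedness and precision: it avoids Prohorov's theorem and the appeal to closedness of $\mathcal{A}$ among finite measures, it makes the identification of the two topologies an actual argument rather than an assertion, and it additionally records why $\rho$ is a metric in the first place (subadditivity of $t\mapsto t/(1+t)$ plus Proposition \ref{uM1pn} for non-degeneracy), which the paper handles only in the sentence preceding the proposition. What the paper's route buys is brevity and a formulation that generalizes more readily to non-compact mark spaces, where tightness rather than layer compactness is the natural tool. One small point worth stating explicitly in your write-up: the containment $\mathcal{T}_\rho\subseteq\mathcal{T}_w$ requires that $\rho(\cdot,a')$ be $\mathcal{T}_w$-continuous, which follows from the uniform convergence of the series in (\ref{uM6}); this is implicit in your appeal to the generating maps but deserves a sentence.
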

\begin{proof}
As each $w_{k,n}$ is in $C(\mathds{R}_{+},\mathcal{T}_r)$, the weak
convergence of a sequence $\{a_m\}_{m\in \mathds{N}}\subset
\mathcal{A}$ to a certain $a\in \mathcal{A}$ yields $\rho(a,a_m) \to
0$, $m\to +\infty$. Assume now that $\{a_m\}_{m\in \mathds{N}}$ is a
$\rho$-Cauchy sequence. By taking $\sigma=0$ we then get from the
latter that, for some $m_*\in \mathds{N}$, the cardinalities of all
$a_m$, $m>m_*$, coincide. By Prohorov's theorem this yields that
$\{a_m\}_{m\in \mathds{N}}$ contains a subsequence that weakly
converges to some $a$. Hence, the whole sequence converges in $\rho$
to this $a$. Then the metric is complete and the corresponding
metric topology is exactly the weak topology of $\mathcal{A}$. The
separability and local compactness follow by the fact that
$(\mathds{R}_{+},\mathcal{T}_r)$ is compact.
\end{proof}

\subsection{Marked configuration spaces}

Let ${\Gamma}$ be the set of all locally finite simple
configurations on $X$. That is, each ${\gamma}\in {\Gamma}$ is a
subset of $X$ such that each compact $\Lambda \subset X$ contains a
finite number of the elements of ${\gamma}$. Let now
$\breve{\gamma}$ be the pair $({\gamma},n)$, ${\gamma}\in {\Gamma}$
and $n:{\gamma}\to \mathds{N}$. The value of $n$ at a given $x\in
{\gamma}$ can be considered as the multiplicity of $x$ in
$\breve{\gamma}$. That is, $\breve{\gamma}$ is a configuration with
multiple locations, for which ${\gamma}$ is the ground
configuration. Sometimes, we will write $n_{\breve{\gamma}}(x)$ to
explicitly indicate that we mean the multiplicity of $x$ in the
mentioned $\breve{\gamma}$. By $\breve{\Gamma}$ we denote the set of
all such multiple configurations. For $\breve{\gamma}=(\gamma,n)$,
we write ${\gamma}= p(\breve{\gamma})$. The weak-hash (vague)
topology of $\breve{\Gamma}$ is defined as the coarsest topology
that makes continuous all the maps $\breve{\gamma}\mapsto \sum_{x\in
p(\breve{\gamma})} n(x) g(x)$, $g\in C_{\rm cs}(X)$. It is
well-known, see e.g., \cite[Lemma 1.2]{Zessin}, that with this
topology $\breve{\Gamma}$ is a Polish space, whereas $\Gamma$ is a
$G_\delta$ subset of $\breve{\Gamma}$, by which it is also Polish.
Following \cite{Lenard} we will also consider $\breve{\gamma}$ as
configurations of point particles, in which distinct particles may
have the same location. Such particles can be enumerated, which
allows one to write
\begin{equation}
  \label{uM7}
  \sum_{x\in p(\breve{\gamma})} n(x) g(x) = \sum_{x\in
  \breve{\gamma}} g(x),
\end{equation}
where in the second sum we mean a certain enumeration of this sort.
In the same sense, we will write
\begin{equation*}
  \sum_{{x}_1\in \breve{\gamma}} \sum_{{x}_2\in \breve{\gamma}\setminus
  x_1} \cdots \sum_{x_m\in \breve{\gamma}\setminus \{x_1, \dots ,
  x_{m-1}\}} g(x_1, \dots , x_m), \qquad m\in \mathds{N},
\end{equation*}
where in expressions like $\breve{\gamma}\setminus x$ we treat $x$
the singleton $\{x\}$.

It is known, see, e.g., \cite[page 397]{Zessin}, that there exists a
collection $\{v_s\}_{s\in\mathds{N}} =:\mathcal{V}\subset C^{+}_{\rm
cs}(X)$ of suitable functions such that the metric
\begin{equation}
  \label{uM9}
  d(\breve{\gamma}, \breve{\gamma}') = \sum_{s\in \mathds{N}}
 \frac{2^{-s}d_s(\breve{\gamma}, \breve{\gamma}')}{1+d_s(\breve{\gamma},
 \breve{\gamma}')}, \quad d_s(\breve{\gamma},
 \breve{\gamma}'):= \left|\sum_{x\in \breve{\gamma}} v_s (x) - \sum_{x\in \breve{\gamma}'} v_s (x)
 \right|,
\end{equation}
is complete and consistent with the weak-hash topology of
$\breve{\Gamma}$. In the sequel, we will always mean this topology
of $\breve{\Gamma}$. Obviously, we can and will assume that
$\mathcal{V}$ contains also the following functions. Let $\delta$ be
a complete metric of $X$ and $X'$ a countable dense subset of $X$.
Each $x'\in X'$ has a countable base of compact neighborhoods, which
we denote by $D(x')$. Each $\Delta \in D(x')$ contains balls $B_q
(x')=\{x\in X: \delta (x,x') < q\}$ with compact closures, where $q$
is a rational number satisfying $q\leq q'$ for a $\Delta$-specific
$q'\in \mathds{Q}$. For $x'\in X'$, $\Delta \in D (x')$, $q\leq q'$
and $\varsigma \in (0,1)\cap\mathds{Q}$, let $v\in C^{+}_{\rm
cs}(X)$ be such that: (a) $v(x) \equiv \varsigma$ for $x\in B_q
(x')$; (b) $v(x) =0$ for $x\in X \setminus \Delta$. The countable
set of all such functions is supposed to be a part of $\mathcal{V}$,
and hence they are taken into account in (\ref{uM9}). Since each
$v_s$ has compact support, for each compact $\Lambda\subset X$ and
any two configurations, $d_s(\breve{\gamma}\cap\Lambda,
 \breve{\gamma}'\cap\Lambda)> 0$ only for finitely many $s$. Here
$\breve{\gamma}\cap \Lambda := (p(\breve{\gamma})\cap \Lambda, n)$.

For $\gamma\in \Gamma$, let $a:\gamma\to \mathcal{A}$ be a map, for
which we denote
\begin{equation}
  \label{uM10}
  |a(x)| = \sum_{\alpha\in a(x)} n_{a(x)} (\alpha).
\end{equation}
Then the pair $\hat{\gamma}=(\gamma, a)$ is a marked configuration
whose ground configuration is $\gamma$ and the mark map is $a$. By
writing $\hat{x} = (x,\alpha)\in \hat{\gamma}$ we will mean that
$x\in\gamma$ and $\alpha \in a(x)$.
 The configuration of marks $a(x) = \{ \alpha_1 , \dots ,
\alpha_{|a(x)|}\}$ yields the ages of the particles located at $x\in
\gamma$, whereas $|a(x)|$ is the total number of such particles. In
some cases, we write $a_{\hat{\gamma}}$ to indicate that $a$ is
defined on a given $\hat{\gamma}$. Let $\widehat{\Gamma}$ denote the
set of all marked configurations $\hat{\gamma}$. Let also
$\breve{p}:\widehat{\Gamma}\to \breve{\Gamma}$ be the map such that
$\breve{p}(\gamma, a)= (\gamma, |a|)$, where $|a|(x) = |a(x)|$ see
(\ref{uM10}). Then $p\circ \breve{p}$ maps $\hat{\gamma}=(\gamma,a)$
into its ground configuration $\gamma$. For brevity, by writing
$p(\hat{\gamma})$ we will mean $(p\circ \breve{p})(\hat{\gamma})$.
Our aim now is to equip $\widehat{\Gamma}$ with a complete metric.
 Define
\begin{gather}
  \label{uM12}
  \kappa(\hat{\gamma}, \hat{\gamma}') = \sum_{s,k,n\in \mathds{N}}
  \frac{2^{-(s+k+n)} \kappa_{s,k,n}(\hat{\gamma}, \hat{\gamma}')}{1+\kappa_{s,k,n}(\hat{\gamma},
  \hat{\gamma}')}, \\[.2cm] \nonumber \kappa_{s,k,n}(\hat{\gamma}, \hat{\gamma}'):=
  \left|\sum_{x\in p(\hat{\gamma})}v_s(x) \sum_{\alpha \in a_{\hat{\gamma}}(x)} w_{k,n}(\alpha) -
  \sum_{x\in  p(\hat{\gamma}')} v_s(x) \sum_{\alpha \in a_{\hat{\gamma}'}(x)} w_{k,n}(\alpha)
  \right|.
\end{gather}
Note that the latter can also be written as, cf. (\ref{uM7}),
\begin{gather}
  \label{uM12z}
\kappa_{s,k,n}(\hat{\gamma}, \hat{\gamma}') = \left|\sum_{\hat{x}\in
\hat{\gamma}} g_{s,k,n} (\hat{x}) - \sum_{\hat{x}\in \hat{\gamma}'}
g_{s,k,n} (\hat{x})  \right|, \\[.2cm] \nonumber g_{s,k,n} (x,\alpha):= v_s(x)
w_{k,n}(\alpha).
\end{gather}
For a compact $\Lambda \subset X$, we write $\hat{\gamma}\cap\Lambda
= (p(\hat{\gamma})\cap \Lambda, a)$, where $a$ is the restriction of
$a$ from $\hat{\gamma}$ to $p(\hat{\gamma})\cap \Lambda$.
\begin{proposition}
  \label{uM3pn}
For each $\varepsilon>0$, one may find a compact
$\Lambda_\varepsilon \subset X$ such that, for any two
configurations, the following holds
\begin{equation}
  \label{uM12a}
  \left| \kappa(\hat{\gamma},
\hat{\gamma}') - \kappa(\hat{\gamma}\cap\Lambda_\varepsilon,
\hat{\gamma}'\cap\Lambda_\varepsilon)\right| < \varepsilon.
\end{equation}
\end{proposition}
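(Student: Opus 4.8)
The plan is to exploit the fact that each $v_s$ has compact support, so that $g_{s,k,n}(x,\alpha)=v_s(x)w_{k,n}(\alpha)$ vanishes whenever $x\notin \mathrm{supp}\, v_s$, together with the rapid decay of the weights $2^{-(s+k+n)}$ in (\ref{uM12}). First I would fix $\varepsilon>0$, choose $N\in\mathds{N}$ with $2^{1-N}<\varepsilon$, and set $\Lambda_\varepsilon = \bigcup_{s=1}^{N}\mathrm{supp}\, v_s$, which is compact as a finite union of compact sets. Note that $N$ and $\Lambda_\varepsilon$ depend on $\varepsilon$ alone, which is exactly what secures the uniformity over all pairs of configurations asserted in the statement.

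Next I would split the series (\ref{uM12}) into the part with $s\leq N$ and the tail with $s>N$, and compare it term by term with the analogous splitting of $\kappa(\hat{\gamma}\cap\Lambda_\varepsilon, \hat{\gamma}'\cap\Lambda_\varepsilon)$. For $s\leq N$ the terms coincide. Indeed, since $\mathrm{supp}\, v_s\subset \Lambda_\varepsilon$ and $\hat{\gamma}\cap\Lambda_\varepsilon$ retains precisely those $\hat{x}=(x,\alpha)\in\hat{\gamma}$ with $x\in\Lambda_\varepsilon$ together with their marks, the summand $g_{s,k,n}(\hat{x})$ is unaffected by the restriction, so that $\sum_{\hat{x}\in\hat{\gamma}}g_{s,k,n}(\hat{x})=\sum_{\hat{x}\in\hat{\gamma}\cap\Lambda_\varepsilon}g_{s,k,n}(\hat{x})$, and likewise for $\hat{\gamma}'$. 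Hence $\kappa_{s,k,n}(\hat{\gamma},\hat{\gamma}')=\kappa_{s,k,n}(\hat{\gamma}\cap\Lambda_\varepsilon,\hat{\gamma}'\cap\Lambda_\varepsilon)$ for all $k,n$ and all $s\leq N$, so these terms cancel in the difference on the left-hand side of (\ref{uM12a}). The sums here are finite because $v_s$ is compactly supported and $\gamma$ is locally finite, while each mark $a(x)$ is a finite configuration, so no convergence issue arises.

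It then remains to bound the two tails. Using $t/(1+t)\leq 1$ for $t\geq 0$, each summand in (\ref{uM12}) is at most $2^{-(s+k+n)}$, whence
\begin{equation*}
\sum_{s>N}\sum_{k,n\in\mathds{N}}\frac{2^{-(s+k+n)}\kappa_{s,k,n}(\hat{\gamma},\hat{\gamma}')}{1+\kappa_{s,k,n}(\hat{\gamma},\hat{\gamma}')} \leq \sum_{s>N}2^{-s}\sum_{k\in\mathds{N}}2^{-k}\sum_{n\in\mathds{N}}2^{-n} = 2^{-N},
\end{equation*}
and the identical bound holds for the restricted configurations $\hat{\gamma}\cap\Lambda_\varepsilon$, $\hat{\gamma}'\cap\Lambda_\varepsilon$. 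By the triangle inequality the left-hand side of (\ref{uM12a}) is therefore at most $2\cdot 2^{-N}=2^{1-N}<\varepsilon$, which proves the claim.

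There is no serious obstacle here: the statement is a direct consequence of the compact support of the $v_s$ and the summability of the weights $2^{-(s+k+n)}$. The only point requiring a little care is verifying that restricting to $\Lambda_\varepsilon$ leaves the low-$s$ terms literally unchanged; this is where the precise definition of $\hat{\gamma}\cap\Lambda$ — keeping the full marks of the retained ground points — is used. The decisive structural feature is that $N$ and $\Lambda_\varepsilon$ are chosen independently of $\hat{\gamma}$ and $\hat{\gamma}'$, which is what upgrades the pointwise estimate into the claimed uniform one.
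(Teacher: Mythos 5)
Your proof is correct and follows essentially the same route as the paper: take $\Lambda_\varepsilon$ to be the union of the supports of $v_s$ for $s$ up to a threshold chosen from $\varepsilon$, observe that the low-$s$ terms of (\ref{uM12}) are unchanged by the restriction, and bound the tails by the geometric weights. The only cosmetic difference is that the paper absorbs the factor $2$ by estimating the difference of the two tails at once (each summand $t/(1+t)$ lies in $[0,1)$, so the difference of corresponding terms is bounded by $2^{-(s+k+n)}$), whereas you bound the two tails separately and adjust the threshold accordingly.
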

\begin{proof}
Fix $\varepsilon>0$ and then pick $s_*\in \mathds{N}$ such that
$2^{s_*}> 1/\varepsilon$. Now let $\Lambda_\varepsilon$ be covered
by the supports of $v_s$ with $s\leq s_*$. For such $s$ and all
$k,n\in \mathds{N}$, we have $\kappa_{s,k,n}(\hat{\gamma},
\hat{\gamma}')=\kappa_{s,k,n}(\hat{\gamma}\cap\Lambda_\varepsilon,
\hat{\gamma}'\cap\Lambda_\varepsilon)$, see (\ref{uM12}).  This
clearly yields (\ref{uM12a}).
\end{proof}
Since $\sigma=0$ is in $\varSigma$,  by (\ref{uM2}) and (\ref{uM9})
we have that
\begin{equation}
  \label{uM13}
  d (\breve{p}(\hat{\gamma}), \breve{p}(\hat{\gamma}')) \leq \kappa(\hat{\gamma},
  \hat{\gamma}').
\end{equation}
\begin{proposition}
\label{Qq1pn} The metric space $(\widehat{\Gamma}, \kappa)$ is
complete.
\end{proposition}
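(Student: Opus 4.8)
The plan is to take a $\kappa$-Cauchy sequence $\{\hat{\gamma}_m\}_{m\in\mathds{N}}\subset\widehat{\Gamma}$ and build its limit in two stages: first recover the ground configuration together with the total multiplicities, and then recover, at each limiting location, the multiset of ages.

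First, by (\ref{uM13}) the map $\breve{p}$ is $1$-Lipschitz from $(\widehat{\Gamma},\kappa)$ to $(\breve{\Gamma},d)$; hence $\{\breve{p}(\hat{\gamma}_m)\}_{m\in\mathds{N}}$ is $d$-Cauchy. Since $d$ from (\ref{uM9}) is complete and consistent with the weak-hash topology, this sequence converges to some $\breve{\gamma}^*=(\gamma^*,N^*)\in\breve{\Gamma}$. This already yields the ground configuration $\gamma^*$ of the prospective limit and the multiplicities $N^*(x)=|a^*(x)|$ that the mark map of the limit must carry.

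Second, I would localize by means of Proposition \ref{uM3pn}. Fix a compact $\Lambda\subset X$; then $\gamma^*\cap\Lambda=\{x_1,\dots,x_p\}$ is finite, and I can enclose the $x_i$ in pairwise disjoint compact neighborhoods $B_i$. Vague convergence of $\breve{p}(\hat{\gamma}_m)$ guarantees that, for all large $m$, every point of $p(\hat{\gamma}_m)$ lying in $\Lambda$ sits in some $B_i$ and that the total $B_i$-multiplicity equals $N^*(x_i)$. The key device is the special family of functions that were placed into $\mathcal{V}$ (those equal to a constant $\varsigma$ on a ball $B_q(x')$ and vanishing outside a chosen $\Delta$): choosing such a $v_s$ adapted to $B_i$ turns $\kappa_{s,k,n}(\hat{\gamma}_m,\hat{\gamma}_{m'})$ into $\varsigma\,\rho_{k,n}(A_i^{(m)},A_i^{(m')})$ up to the already-controlled ground part, where $A_i^{(m)}\in\mathcal{A}$ is the multiset collecting all ages of all particles of $\hat{\gamma}_m$ located in $B_i$. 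Thus each $\{A_i^{(m)}\}_{m\in\mathds{N}}$ is $\rho$-Cauchy, and by Proposition \ref{uM2pn} it converges to some $A_i^*\in\mathcal{A}$; taking $\sigma_1=0$ in (\ref{uM2}), so that $w_{1,n}\equiv1$, forces $|A_i^*|=N^*(x_i)$. Setting $a^*(x_i)=A_i^*$ and exhausting $X$ by compacts defines a mark map $a^*$ on all of $\gamma^*$, so that $\hat{\gamma}^*:=(\gamma^*,a^*)\in\widehat{\Gamma}$.

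Finally, I would verify $\kappa(\hat{\gamma}_m,\hat{\gamma}^*)\to0$: given $\varepsilon>0$, Proposition \ref{uM3pn} replaces $\kappa(\hat{\gamma}_m,\hat{\gamma}^*)$ by $\kappa(\hat{\gamma}_m\cap\Lambda_\varepsilon,\hat{\gamma}^*\cap\Lambda_\varepsilon)$ up to $\varepsilon$ uniformly in $m$, and on the fixed compact $\Lambda_\varepsilon$ the stabilization of the ground points together with the $\rho$-convergence $A_i^{(m)}\to A_i^*$ drives this truncated distance to $0$. The step I expect to be the main obstacle is the second one: cleanly separating the age information from the intertwined sum $\sum_{x}v_s(x)\sum_{\alpha}w_{k,n}(\alpha)$ at each limiting location, and in particular handling the case where several distinct ground points of $\hat{\gamma}_m$ (or a single point of high multiplicity) collapse onto one $x_i$. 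This is exactly where the indicator-like members of $\mathcal{V}$ and the local finiteness of $\gamma^*$ (which makes the $x_i$ genuinely separated) are needed.
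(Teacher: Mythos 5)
Your proposal is correct and follows essentially the same route as the paper's proof: project via $\breve{p}$ using (\ref{uM13}) to obtain the limiting ground configuration with multiplicities, then use the indicator-like members of $\mathcal{V}$ on small disjoint neighborhoods of each limit point to show the local age-multisets form $\rho$-Cauchy sequences in $(\mathcal{A},\rho)$, and finally verify convergence through the localization of Proposition \ref{uM3pn}. The "main obstacle" you flag is resolved in the paper exactly as you anticipate, by introducing an intermediate configuration whose ground points come from the limit and whose marks come from $\hat{\gamma}_m$, and splitting the truncated distance accordingly.
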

\begin{proof}
We begin by pointing out the following evident fact
\begin{equation}
  \label{uJ}
\tilde{\kappa}_s (\hat{\gamma}, \hat{\gamma}') := \sum_{k,n\in
\mathds{N}}
  \frac{2^{-(s+k+n)} \kappa_{s,k,n}(\hat{\gamma}, \hat{\gamma}')}{1+\kappa_{s,k,n}(\hat{\gamma},
  \hat{\gamma}')}\leq {\kappa} (\hat{\gamma}, \hat{\gamma}'),
\end{equation}
holding for all $s\in \mathds{N}$ and $\hat{\gamma}, \hat{\gamma}'$.
Let now $\{\hat{\gamma}_m=(\gamma_m, a_m)\}_{m\in \mathds{N}}\subset
\widehat{\Gamma}$ be a $\kappa$-Cauchy sequence. By (\ref{uM13}) the
sequence $\{\breve{p}(\hat{\gamma}_m)\}_{m\in \mathds{N}}\subset
\breve{\Gamma}$ converges to some $\breve{\gamma}$. Take now $x\in
p(\breve{\gamma})$ and then pick a compact $\Delta\subset X$ such
that $\Delta \cap p(\breve{\gamma})= \{x\}$. For this $\Delta$, we
then set $n_m(x)=\sum_{y\in
p(\breve{\gamma}_m)\cap\Delta}n_{\breve{\gamma}_m}(y)$, $m\in
\mathds{N}$. From the convergence of
$\{\breve{p}(\hat{\gamma}_m)\}_{m\in \mathds{N}}$ to
$\breve{\gamma}$, it follows that $n_m(x)\to n(x)$; hence, there
exists $m_*\in \mathds{N}$ such that $n_m(x)= n(x)$ for all $m>m_*$.
Now we pick $x'\in X'$ and $q \in Q$ such that $x\in B_{q/2}(x')$
and the closure of $B_q(x')$ lies in $\Delta$. Let now $v_s\in
\mathcal{V}$ be such that $v_s(y) =\varsigma \in (0,1)\cap
\mathds{Q}$, $y\in B_{q/2}(x')$, and $v_s(y) =0$ for $y\in
X\setminus B_{q}(x')$. For these $m_*$ and $s$,
$\{\hat{\gamma}_m\}_{m\geq m_*+1}$ is also a
$\tilde{\kappa}_s$-Cauchy sequence, see (\ref{uJ}), for which we
have
\begin{equation*}
  \kappa_{s,k,n}(\hat{\gamma}_{m},\hat{\gamma}_{m+l}) = \varsigma
  \left|\sum_{y\in p(\hat{\gamma}_{m}) \cap\Delta}\sum_{\alpha \in \hat{a}_m (y)} w_{\sigma_{k},n}(\alpha) -
   \sum_{y\in p(\hat{\gamma}_{m+l}) \cap\Delta}\sum_{\alpha \in \hat{a}_{m+l}
   (y)}
   w_{\sigma_{k},n}(\alpha)\right|,
\end{equation*}
holding for all $k,n\in \mathds{N}$, $m>m_*$ and $l\in \mathds{N}$.
Here $\hat{a}_{m}:=a_{\hat{\gamma}_m}$,
$\hat{a}_{m+l}:=a_{\hat{\gamma}_{m+l}}$. Let us enumerate
$\hat{x}=(x, \alpha)\in \hat{\gamma}_m\cap\Delta$ in such a way that
$\alpha_{p,m}\leq \alpha_{p+1,m}$ for all $p$. This yields
$\hat{\gamma}_m\cap\Delta=\{(x_{1,m} , \alpha_{1,m}), \dots
(x_{n,m}, \alpha_{n,m})\}$ with $n = n_{m}(x)=n(x)$. Similarly, we
have $\hat{\gamma}_{m+l}\cap\Delta=\{(x_{1,m+l} , \alpha_{1,m+l}),
\dots (x_{n,m+l}, \alpha_{n,m+l})\}$ with the same $n$. Then
$\{\alpha_{1,m}, \dots \alpha_{n,m}\} =:a_m \in \mathcal{A}$, and
also $\{\alpha_{1,m+l}, \dots \alpha_{n,m+l}\} =:a_{m+l}\in
\mathcal{A}$, and the latter equality can be rewritten as follows
\begin{equation*}
\kappa_{s,k,n}(\hat{\gamma}_{m},\hat{\gamma}_{m+l}) = \varsigma
\left| \sum_{p=1}^n w_{\sigma_{k},n}(\alpha_{p,m}) -\sum_{p=1}^n
w_{\sigma_{k},n}(\alpha_{p,m+l})\right| = \varsigma \rho_{k,n} (a_m,
a_{m+l}),
\end{equation*}
see (\ref{uM6}). By (\ref{uJ}) and (\ref{uM6}) we then get
\[
\rho(a_m, a_{m+l}) \leq (2^s/\varsigma) \tilde{\kappa}_s
(\hat{\gamma}_m, \hat{\gamma}_{m+l}).
\]
By Proposition \ref{uM2pn} this yields the convergence of
$\{a_m\}_{m>m_*}$ to some $a(x)\in \mathcal{A}$, which holds for
each $x\in \breve{\gamma}$. This defines the map
$a:p(\breve{\gamma})\to \mathcal{A}$, and hence the configuration
$\hat{\gamma}=(p(\breve{\gamma}),a)$. Our aim now is to prove that
$\kappa (\hat{\gamma}_m, \hat{\gamma})\to 0$ as $m\to +\infty$.

Fix $\varepsilon >0$ and then pick a compact $\Lambda_\varepsilon
\subset X$ such that (\ref{uM12a}) holds with $\varepsilon /3$ in
the right-hand side. Let $\Lambda_\varepsilon^o$ be its interior.
Then pick compact mutually disjoint $\Delta_x\subset \Lambda^o$,
$x\in p(\breve{\gamma})\cap \Lambda^o$ such that
$p(\breve{\gamma})\cap \Delta_x =\{x\}$.  As $\Lambda_\varepsilon$
is compact, $p(\breve{\gamma})\cap \Lambda_\varepsilon^o$ is finite.
Let $\{x_j\}_{j\leq J}$ be an enumeration of it. For brevity, we
will write $\Delta_j$ in place of $\Delta_{x_j}$, $j=1, \dots , J$.
Similarly as above, by the convergence of
$\{{\breve{p}(\hat{\gamma}_m)}\}_{m\in \mathds{N}}$ to
$\breve{\gamma}$, one finds $m_*$ such that $p(\hat{\gamma}_m)\cap
\Delta_j$ is a singleton and $|\breve{p}(\hat{\gamma}_m)\cap
\Delta_j| =: n_m(x_j)  = n(x_j)$, holding for all $m>m_*$ and $j\leq
J$.  Now we repeat the construction just made in each of
$\Delta_{j}$. That is, we enumerate $\hat{\gamma}_m \cap\Delta_{j}
=\{(x^j_{1,m}, \alpha^j_{1,m}), \dots ,
(x^j_{n(x_j),m},\alpha^j_{n(x_j),m})\}$, and then set $a_m^j
=\{\alpha^j_{1,m}, \dots , \alpha^j_{n(x_j),m}\}$. Then we set
$\hat{\gamma}_{*,m}=(\gamma_{*,m}, a_{*,m})$, where $\gamma_{*,m} =
p(\breve{\gamma})\cap\Lambda_\varepsilon^o=\{x_1, \dots  , x_J\}$
and $a_{*,m}(x_j) = a^j_m$. In other words, the ground configuration
of $\hat{\gamma}_{*,m}$ is the part of the limiting configurations
$p(\hat{\gamma})$ contained in $\Lambda_\varepsilon^o$, whereas the
marks are taken from the corresponding part of $\hat{\gamma}_m$. By
the triangle inequality we then have
\begin{equation}
  \label{Qq3}
 \kappa(\hat{\gamma}, \hat{\gamma}_{m}) \leq \kappa(\hat{\gamma}\cap\Lambda^o_\varepsilon, \hat{\gamma}_{*,m})
+\kappa(\hat{\gamma}_{m}\cap\Lambda_\varepsilon^o,
\hat{\gamma}_{*,m})+\varepsilon/3.
\end{equation}
By (\ref{uM12}), for each $s,k,n\in \mathds{N}$, we have
\begin{eqnarray*}
\kappa_{s,k,n} (\hat{\gamma}\cap\Lambda^o_\varepsilon,
\hat{\gamma}_{*,m})& = & \left|\sum_{j=1}^J v_s(x_j)
\left[\sum_{\alpha \in a^j_m} w_{\sigma_k,n}(\alpha) - \sum_{\alpha
\in a(x_j)}
w_{\sigma_k,n}(\alpha)\right] \right|\\[.2cm]& \leq & J \max_{j\leq J}
\rho_{k,n} (a_m^j, a(x_j)). \nonumber
\end{eqnarray*}
Likewise,
\begin{eqnarray*}
\kappa_{s,k,n} (\hat{\gamma}_m\cap\Lambda^o_\varepsilon,
\hat{\gamma}_{*,m})& = & \bigg{|}\sum_{j=1}^J
\left(\sum_{p=1}^{n(x_j)} v_s(x^j_{p,m}) \right) \sum_{\alpha \in
a^j_m} w_{\sigma_k,n}(\alpha) \\[.2cm] \nonumber& - & \sum_{j=1}^J v_s(x_j)
\sum_{\alpha \in a^j_m)}
w_{\sigma_k,n}(\alpha) \bigg{|}\\[.2cm]& \leq &  d_s (\breve{\gamma}_m\cap\Lambda^o_\varepsilon,
\breve{\gamma}\cap\Lambda^o_\varepsilon). \nonumber
\end{eqnarray*}
Both latter estimates yield
\[
\kappa(\hat{\gamma}\cap\Lambda^o_\varepsilon, \hat{\gamma}_{*,m})
\leq J\max_{j\leq J} \rho(a^j_m, a(x_j)),
\]
\[
\kappa(\hat{\gamma}_{m}\cap\Lambda_\varepsilon^o,
\hat{\gamma}_{*,m}) \leq d(\breve{\gamma}_m, \breve{\gamma}).
\]
By the aforementioned convergence $\breve{\gamma}_m \to
\breve{\gamma}$ and $a^j_m \to a(x_j)$, one can find $m_\varepsilon
> m_*$ such that the first two summands in (\ref{Qq3}) are smaller
than $\varepsilon$ for $m>m_\varepsilon$, which completes the proof.
\end{proof}

\subsection{Measures and functions on configuration spaces}

For $v_s$ and $w_{k,n}$ as in (\ref{uM12}) we set
\begin{equation*}
  \theta_{s,k,n} (x,\alpha ) = \exp\left( - v_s(x) w_{k,n} (\alpha)\right) -1
  = \exp\left( - g_{s,k,n}(x,\alpha)\right) -1,
\end{equation*}
see (\ref{uM12z}). Then $\theta_{s,k,n} (x,\alpha ) \in C_{\rm
cs}(\widehat{X})$ and $\theta_{s,k,n} (x,\alpha )\in (-1, 0]$. Let
$\Theta$ be the subset of $C_{\rm cs}(\widehat{X})$ consisting of
\begin{equation}
  \label{JJ}
  \theta (\hat{x}) = e^{- g(\hat{x})} - 1, \qquad g(x,\alpha) =
  \sum_{j} v_{s_j} (x) w_{k_j,n_j}(\alpha),
\end{equation}
where the latter sum runs over a finite subset of $\mathds{N}^3$.
That is, each such $g$ is a finite sum of $g_{s,k,n}$ defined in
(\ref{uM12z}). Note that $\Theta$ is countable and closed under the
map $(\theta,\theta') \mapsto (\theta \ast \theta')$, where
\begin{equation}
  \label{J1}
  (\theta \ast \theta')(\hat{x}) = \theta (\hat{x}) +
  \theta'(\hat{x}) + \theta (\hat{x})
  \theta'(\hat{x}) = -1 + (1+\theta(x)) (1+\theta'(x)).
\end{equation}
Moreover, by (\ref{uM1}), (\ref{uM2}) and (\ref{JJ}) it follows that
\begin{equation}
  \label{N11jj}
  g(x,\alpha) \leq g(x,0),
\end{equation}
holding for all $\alpha \geq 0$ and $x\in X$.
 Now for $\theta\in \Theta$, we set
\begin{equation}
  \label{N10j}
  F^\theta(\hat{\gamma}) = \prod_{\hat{x}\in \hat{\gamma}} (1 +
  \theta(\hat{x})) = \exp\left(- \sum_{\hat{x}\in \hat{\gamma}} g(\hat{x}) \right), \qquad \hat{\gamma}\in \widehat{\Gamma}.
\end{equation}
Then $F^\theta(\hat{\gamma})\in (0, 1]$ for all $\hat{\gamma}\in
\widehat{\Gamma}$, and hence $F^\theta\in C_{\rm
b}(\widehat{\Gamma})$. The set of all such functions will be denoted
by $\mathcal{F}_\Theta$. For $\mu \in
\mathcal{P}(\widehat{\Gamma})$, we then have
\[
\mu(F^\theta) = \int_{\widehat{\Gamma}} F^\theta (\hat{\gamma}) \mu
( d \hat{\gamma}) \leq 1.
\]
The Poisson measure $\pi_\varrho$ with intensity measure $\varrho$
satisfies
\begin{equation}
  \label{N7j}
  \pi_\varrho(F^\theta) = \exp\left( \varrho(\theta)\right) =
  \exp\left( \int_{\widehat{X}} \theta (\hat{x})
  \varrho(d\hat{x})\right).
\end{equation}
For $\mu_1,\mu_2\in\mathcal{P}(\widehat{\Gamma})$, their convolution
is defined by the expression
\begin{equation}
 \label{Conv}
  (\mu_1 \star \mu_2) (F) = \int_{\widehat{\Gamma}^2}
  F(\hat{\gamma}_1\cup \hat{\gamma}_2) \mu_1 (d \hat{\gamma}_1) \mu_2 (d
  \hat{\gamma}_2),
\end{equation}
that ought to hold for all $F\in B_{\rm b}(\widehat{\Gamma})$. For
$F^\theta$ as in (\ref{N10j}), it takes the form
\begin{equation}
  \label{N11j}
(\mu_1 \star \mu_2) (F^\theta) = \mu_1 (F^\theta) \mu_2 (F^\theta).
\end{equation}
Recall that a set $\mathcal{F}\subset C_{\rm b}(\widehat{\Gamma})$
is called convergence determining if $\mu_n(F)\to \mu(F)$, $n\to
+\infty$, implies $\mu_n\Rightarrow \mu$, holding for each
$\{\mu_n\}_{n\in \mathds{N}}\subset \mathcal{P}(\widehat{\Gamma})$.
It is known, see \cite[Theorem 4.5, page 113]{EK}, that such
$\mathcal{F}$ has this property if it is closed under pointwise
multiplication and is strongly separating. The latter means that,
for each $\hat{\gamma}\in \widehat{\Gamma}$ and $\epsilon
>0$, there exists a finite family $\{F_j\}\subset \mathcal{F}$ such
that
\begin{equation}
  \label{N11ju}
  \inf_{\hat{\gamma}' \in \widehat{C}_\epsilon} \max_{j} \left|F_j
  (\hat{\gamma}) - F_j(\hat{\gamma}') \right| >0, \qquad
  \widehat{C}_\epsilon := \widehat{\Gamma}\setminus
  \widehat{B}_\epsilon (\hat{\gamma}).
\end{equation}
Here $\widehat{B}_\epsilon (\hat{\gamma})=\{\hat{\gamma}':
\kappa(\hat{\gamma}, \hat{\gamma}') < \epsilon\}$, see (\ref{uM12}).
Note that taking $\epsilon \geq 1$ does not make sense as
$\kappa(\hat{\gamma}, \hat{\gamma}')< 1$ for all configurations.
\begin{proposition}
  \label{N2jpn}
The set $\mathcal{F}_\Theta$ is strongly separating and thus
convergence determining.
\end{proposition}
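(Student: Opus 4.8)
The plan is to apply the criterion recalled just before the statement, \cite[Theorem 4.5, page 113]{EK}: a family $\mathcal{F}\subset C_{\rm b}(\widehat{\Gamma})$ that is closed under pointwise multiplication and strongly separating is convergence determining. The closure under pointwise multiplication is immediate here: by (\ref{N10j}) and (\ref{J1}) one has $F^\theta F^{\theta'} = F^{\theta\ast\theta'}$, and $\Theta$ is closed under $\ast$ as noted after (\ref{J1}); hence $\mathcal{F}_\Theta$ is an algebra under pointwise multiplication, and the entire burden falls on establishing the strong separation property (\ref{N11ju}).

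To verify (\ref{N11ju}), I would fix $\hat{\gamma}\in\widehat{\Gamma}$ and $\epsilon\in(0,1)$ (for $\epsilon\geq 1$ the set $\widehat{C}_\epsilon$ is empty, since $\kappa<1$, and there is nothing to prove). The first step is to reduce from the full metric $\kappa$ to finitely many of its components $\kappa_{s,k,n}$. Using $\sum_{s,k,n\in\mathds{N}}2^{-(s+k+n)}=1$, choose a finite $I\subset\mathds{N}^3$ with $\sum_{(s,k,n)\notin I}2^{-(s+k+n)}<\epsilon/2$. For any $\hat{\gamma}'\in\widehat{C}_\epsilon$ we have $\kappa(\hat{\gamma},\hat{\gamma}')\geq\epsilon$; discarding the tail in (\ref{uM12}) (each summand there is at most $2^{-(s+k+n)}$) and using $t/(1+t)\leq t$, this forces $\sum_{(s,k,n)\in I}2^{-(s+k+n)}\kappa_{s,k,n}(\hat{\gamma},\hat{\gamma}')\geq\epsilon/2$, whence $\kappa_{s_0,k_0,n_0}(\hat{\gamma},\hat{\gamma}')\geq\epsilon/2$ for at least one index $(s_0,k_0,n_0)\in I$ (depending on $\hat{\gamma}'$).

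The last step turns this into a separation of the values of functions in $\mathcal{F}_\Theta$. The single-term functions $\theta_{s,k,n}$ belong to $\Theta$, so $F^{\theta_{s,k,n}}\in\mathcal{F}_\Theta$, and by (\ref{N10j}) and (\ref{uM12z}) one has $F^{\theta_{s,k,n}}(\hat{\gamma})=\exp(-\sum_{\hat{x}\in\hat{\gamma}}g_{s,k,n}(\hat{x}))$, with $\kappa_{s,k,n}(\hat{\gamma},\hat{\gamma}')$ being exactly the absolute difference of the two exponents. Writing $a$ and $b$ for these exponents at $\hat{\gamma}$ and $\hat{\gamma}'$, the elementary identity $|e^{-a}-e^{-b}|=e^{-\min\{a,b\}}(1-e^{-|a-b|})\geq(1-e^{-\epsilon/2})e^{-a}$ gives, for the index $(s_0,k_0,n_0)$ above, $|F^{\theta_{s_0,k_0,n_0}}(\hat{\gamma})-F^{\theta_{s_0,k_0,n_0}}(\hat{\gamma}')|\geq(1-e^{-\epsilon/2})F^{\theta_{s_0,k_0,n_0}}(\hat{\gamma})$. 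Taking the finite family $\{F^{\theta_{s,k,n}}:(s,k,n)\in I\}$ and putting $\delta:=(1-e^{-\epsilon/2})\min_{(s,k,n)\in I}F^{\theta_{s,k,n}}(\hat{\gamma})$, which is strictly positive because $I$ is finite and $F^\theta(\hat{\gamma})\in(0,1]$ for every $\theta$, I obtain $\max_{(s,k,n)\in I}|F^{\theta_{s,k,n}}(\hat{\gamma})-F^{\theta_{s,k,n}}(\hat{\gamma}')|\geq\delta$ uniformly over $\hat{\gamma}'\in\widehat{C}_\epsilon$, which is exactly (\ref{N11ju}).

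The step requiring the most care is this final conversion. A priori one might worry that, since $t\mapsto e^{-t}$ is a contraction, a lower bound on $\kappa_{s,k,n}=|a-b|$ need not survive under the exponential when both exponents are large (e.g.\ when $\hat{\gamma}'$ carries much more mass than $\hat{\gamma}$). This is precisely why one fixes $\hat{\gamma}$ first: then $F^{\theta_{s,k,n}}(\hat{\gamma})=e^{-a}$ is a fixed strictly positive constant, and the bound $e^{-\min\{a,b\}}\geq e^{-a}$ guarantees the separation constant stays bounded away from zero regardless of $b$. Finiteness of $I$ together with the strict positivity of $F^\theta$ on all of $\widehat{\Gamma}$ then yields a single uniform $\delta>0$, completing the verification.
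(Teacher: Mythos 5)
Your proof is correct and follows essentially the same route as the paper's: reduce $\kappa$ to finitely many components $\kappa_{s,k,n}$, use the single-term functions $F^{\theta_{s,k,n}}$, and bound $|F^{\theta_{s,k,n}}(\hat{\gamma})-F^{\theta_{s,k,n}}(\hat{\gamma}')|$ from below via the exponent difference. If anything, your version is slightly sharper at the last step: the paper's inequality (\ref{N6j}) involves $\min\{F^{\theta_{s,k,n}}(\hat{\gamma});F^{\theta_{s,k,n}}(\hat{\gamma}')\}$ and the paper only records that the relevant $\kappa_{s,k,n}$ is positive, whereas your quantitative bound $\kappa_{s_0,k_0,n_0}\geq\epsilon/2$ combined with $e^{-\min\{a,b\}}\geq e^{-a}$ makes the required uniformity of the infimum over $\hat{\gamma}'\in\widehat{C}_\epsilon$ explicit.
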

\begin{proof}
By the very definition of $\Theta$, cf. (\ref{J1}),
$\mathcal{F}_\Theta$ is closed under pointwise multiplication. To
prove (\ref{N11ju}), we note that, see (\ref{N10j}) and
(\ref{uM12z}),
\begin{equation}
  \label{N6j}
  \left|F^{\theta_{s,k,n}}(\hat{\gamma}) - F^{\theta_{s,k,n}}(\hat{\gamma}')
  \right| \geq \min\left\{F^{\theta_{s,k,n}}(\hat{\gamma});
  F^{\theta_{s,k,n}}(\hat{\gamma}')\right\}
  \kappa_{s,k,n}(\hat{\gamma}; \hat{\gamma}')
\end{equation}
holding for all $\hat{\gamma}, \hat{\gamma}'\in \widehat{\Gamma}$.
Now we fix $\hat{\gamma}$ and $\epsilon\in (0,1)$ and then take $m$
such that $2^{-m}< \epsilon /2$. For this $m$ and any $\hat{\gamma'}
\in \widehat{C}_\epsilon$, by (\ref{uM12}) we readily conclude that
\[
\max_{(s,k,n): s+k+n \leq m} \kappa_{s,k,n} (\hat{\gamma},
\hat{\gamma}') >0,
\]
which by (\ref{N6j}) yields the proof.
\end{proof}
The important properties of the family $\mathcal{F}_\Theta$ are
summarized in the following statement.
\begin{proposition}
  \label{N2pn}
The following is true:
\begin{itemize}
  \item[(i)] $\mathcal{B}(\widehat{\Gamma}) = \sigma \{\mathcal{F}_\Theta\}$;
  \item[(ii)] $B_{\rm b}(\widehat{\Gamma})$ is the $bp$-closure of
  the linear span of
  $\mathcal{F}_\Theta$;
\item[(iii)] $\mathcal{F}_\Theta$ is separating; \item[(iv)] $\mathcal{F}_\Theta$ is convergence determining.
\end{itemize}
\end{proposition}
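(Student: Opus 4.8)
The plan is to dispatch the four items in an order that exploits their logical dependence, treating (i) as the substantive step and obtaining the rest from it together with Proposition \ref{N2jpn}. Item (iv) is nothing but the conclusion of Proposition \ref{N2jpn}, which already asserts that $\mathcal{F}_\Theta$ is convergence determining, so there is nothing to add. Item (iii) then follows formally: if $\mu_1(F^\theta)=\mu_2(F^\theta)$ for every $\theta\in\Theta$, apply the convergence-determining property to the constant sequence $\mu_n:=\mu_1$, whose integrals against each $F^\theta$ equal $\mu_2(F^\theta)$; this forces $\mu_1\Rightarrow\mu_2$, and uniqueness of weak limits gives $\mu_1=\mu_2$. (Equivalently, one invokes the strong separation of Proposition \ref{N2jpn}, which in particular separates the points of $\widehat{\Gamma}$, together with the fact that the linear span of $\mathcal{F}_\Theta$ is an algebra, and cites \cite[Theorem 4.5, page 113]{EK}.)

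The heart of the matter is (i). The inclusion $\sigma\{\mathcal{F}_\Theta\}\subseteq\mathcal{B}(\widehat{\Gamma})$ is immediate, since each $F^\theta\in C_{\rm b}(\widehat{\Gamma})$ is Borel. For the reverse inclusion I would show that the metric $\kappa$ is $\sigma\{\mathcal{F}_\Theta\}$-measurable. Fix $\hat{\gamma}_0$. Since $F^{\theta_{s,k,n}}(\hat{\gamma})=\exp(-\sum_{\hat{x}\in\hat{\gamma}}g_{s,k,n}(\hat{x}))\in(0,1]$, the map $\hat{\gamma}\mapsto \sum_{\hat{x}\in\hat{\gamma}}g_{s,k,n}(\hat{x})=-\log F^{\theta_{s,k,n}}(\hat{\gamma})$ is $\sigma\{\mathcal{F}_\Theta\}$-measurable, being the composition of $F^{\theta_{s,k,n}}\in\mathcal{F}_\Theta$ with a Borel function. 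Hence each $\kappa_{s,k,n}(\cdot,\hat{\gamma}_0)$, and then $\kappa(\cdot,\hat{\gamma}_0)$ as the absolutely convergent series in (\ref{uM12}), is $\sigma\{\mathcal{F}_\Theta\}$-measurable. Consequently every ball $\widehat{B}_\epsilon(\hat{\gamma}_0)$ lies in $\sigma\{\mathcal{F}_\Theta\}$; invoking separability of $(\widehat{\Gamma},\kappa)$ to write an arbitrary open set as a countable union of such balls centered at a dense set yields $\mathcal{B}(\widehat{\Gamma})\subseteq\sigma\{\mathcal{F}_\Theta\}$. An alternative, in the spirit of Proposition \ref{Qh2pn}, is to note that $\Theta$ is countable and $\mathcal{F}_\Theta$ separates points, so $T:\hat{\gamma}\mapsto(F^\theta(\hat{\gamma}))_{\theta\in\Theta}$ is a continuous injection of the Polish space $\widehat{\Gamma}$ into $\mathds{R}^{\mathds{N}}$, whence by Kuratowski's theorem \cite{Part} $T$ is a Borel isomorphism onto its image and $\sigma\{\mathcal{F}_\Theta\}=T^{-1}(\mathcal{B}(\mathds{R}^{\mathds{N}}))=\mathcal{B}(\widehat{\Gamma})$.

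Finally, (ii) follows from (i) by the functional (multiplicative system) form of the monotone class theorem. The linear span $\mathcal{A}$ of $\mathcal{F}_\Theta$ is an algebra of bounded functions, because $\mathcal{F}_\Theta$ is closed under pointwise multiplication, see (\ref{J1}), and contains the constant $1$ (the empty choice of indices in (\ref{JJ}) gives $g\equiv0$, hence $F^\theta\equiv1$). Its $bp$-closure $\mathcal{H}$ is a $bp$-closed vector space of bounded functions containing the multiplicative family $\mathcal{F}_\Theta$ and the constants, so by the multiplicative system theorem it contains every bounded $\sigma\{\mathcal{F}_\Theta\}$-measurable function; by (i) this is exactly $B_{\rm b}(\widehat{\Gamma})$. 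The opposite inclusion $\mathcal{H}\subseteq B_{\rm b}(\widehat{\Gamma})$ is clear, since bounded pointwise limits of bounded Borel functions are bounded and Borel. Hence $\mathcal{H}=B_{\rm b}(\widehat{\Gamma})$.

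The main obstacle is item (i), and within it the reverse inclusion: one must recover the full Borel structure from the \emph{countable} family $\mathcal{F}_\Theta$. The device that makes this work is the explicit representation of the metric through the very functions $F^{\theta_{s,k,n}}$ (via $-\log$), which converts measurability of $\kappa$ into measurability of members of $\mathcal{F}_\Theta$; once (i) is in hand, items (ii)--(iv) are essentially formal, resting on Proposition \ref{N2jpn} and standard measure-theoretic machinery.
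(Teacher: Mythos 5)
Your proof is correct and follows essentially the same route as the paper: (iv) is just Proposition \ref{N2jpn}, (iii) is deduced from it, and for (i)--(ii) you supply in full the standard arguments that the paper merely cites from Dawson (a countable point-separating family generates the Borel $\sigma$-field on a Polish space via the Kuratowski argument, and the multiplicative-system form of the monotone class theorem for the $bp$-closure). Your first argument for (i) --- recovering measurability of $\kappa(\cdot,\hat{\gamma}_0)$ from $-\log F^{\theta_{s,k,n}}$ and the series (\ref{uM12}) --- is a nice self-contained instantiation of that cited machinery exploiting the specific link between $\mathcal{F}_\Theta$ and the metric.
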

The proof of (i) and (ii) is standard, see \cite[Lemma 3.2.5 and
Theorem 3.2.6, page 43]{Dawson}. The proof of (iv) was done above,
whereas (iii) is a direct consequence of (iv), cf. \cite[page
113]{EK}.
\begin{proposition}
  \label{Ju1pn}
Under Assumption \ref{Jass}, the Kolmogorov operator introduced in
(\ref{S1}) has the property $L:\mathcal{F}_\Theta \to C_{\rm
b}(\widehat{\Gamma})$.
\end{proposition}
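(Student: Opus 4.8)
The plan is to compute $(LF^\theta)(\hat\gamma)$ explicitly for a generic $F^\theta\in\mathcal{F}_\Theta$, to observe that it factorizes as $F^\theta(\hat\gamma)$ times a prefactor, and then to prove that this prefactor (and hence the product) is bounded and $\kappa$-continuous. Throughout I would write $1+\theta(\hat x)=e^{-g(\hat x)}$, $F^\theta(\hat\gamma)=e^{-G(\hat\gamma)}$ with $G(\hat\gamma):=\sum_{\hat x\in\hat\gamma}g(\hat x)$, and denote by $g'(\hat x)$ the age-derivative $\tfrac{\partial}{\partial\alpha}g(x,\alpha)$. Treating the three terms of (\ref{S1}) separately: differentiating $\prod_{\hat y}(1+\theta(\hat y))$ in the age of a single particle $(x,\alpha)$ gives the factor $-g'(x,\alpha)e^{-g(x,\alpha)}$, which recombines with the other factors into $-F^\theta(\hat\gamma)\sum_{\hat x}g'(\hat x)$; since $F^\theta(\hat\gamma\setminus\hat x)=F^\theta(\hat\gamma)e^{g(\hat x)}$ and $F^\theta(\hat\gamma\cup(x,0))=e^{-g(x,0)}F^\theta(\hat\gamma)$, collecting everything yields
\[
(LF^\theta)(\hat\gamma)=F^\theta(\hat\gamma)\left[-\sum_{\hat x\in\hat\gamma}g'(\hat x)+\sum_{\hat x\in\hat\gamma}m(\hat x)\left(e^{g(\hat x)}-1\right)+\int_X\left(e^{-g(x,0)}-1\right)\chi(dx)\right].
\]
Because $g$ is a finite sum of the $g_{s,k,n}$, each compactly supported in $x$, all three expressions are finite for every $\hat\gamma$ (the ground configuration is locally finite and each mark set is finite), so $LF^\theta$ is well defined pointwise.

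For boundedness I would use $g\ge 0$ together with $(\ref{uM2z})$, which gives $|g'(\hat x)|\le C_0\,g(\hat x)$ with $C_0$ the largest of the constants $\bar\sigma c/n_j^{1/3}$ appearing in $g$. Hence the aging contribution is dominated by $C_0\,G(\hat\gamma)e^{-G(\hat\gamma)}$. For the departure term I would use $m\le m_*$ and $e^{g(\hat x)}-1\le g(\hat x)e^{\|g\|_\infty}$, so that it is dominated by $m_*e^{\|g\|_\infty}\,G(\hat\gamma)e^{-G(\hat\gamma)}$. Since $t\mapsto te^{-t}\le 1/e$ on $[0,\infty)$, both are uniformly bounded: this is exactly where the exponential weight $F^\theta=e^{-G}$ tames the a priori unbounded number of marked points. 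Finally the arrival integral is bounded by $\chi(K)<\infty$, where $K:=\bigcup_j\operatorname{supp}v_{s_j}$, because $e^{-g(x,0)}-1$ vanishes off $K$ and $\chi$ is Radon; as $F^\theta\le 1$, boundedness of $LF^\theta$ follows.

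For continuity, $F^\theta\in C_{\mathrm b}(\widehat\Gamma)$ is already known and the arrival term is a constant multiple of $F^\theta$, so it remains to show that the two configuration sums $\hat\gamma\mapsto\sum_{\hat x}\psi_i(\hat x)$, with $\psi_1=g'$ and $\psi_2(x,\alpha)=m(x,\alpha)(e^{g(x,\alpha)}-1)$, are $\kappa$-continuous. I would first check that $\psi_1,\psi_2\in C_{\rm cs}(\widehat X)$: $\psi_1$ is continuous and compactly supported in $x$ because each $w'_{k,n}\in C(\mathds{R}_{+},\mathcal{T}_r)$ (Definition \ref{Ju1df}) and $v_s\in C_{\rm cs}(X)$; $\psi_2$ likewise because $g(x,\cdot)\in C(\mathds{R}_{+},\mathcal{T}_r)$ and $m$ is continuous on $\widehat X=X\times(\mathds{R}_{+},\mathcal{T}_r)$, so $\psi_2$ is jointly continuous and supported in the compact set $K\times\mathds{R}_{+}$. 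Continuity of $\hat\gamma\mapsto\sum_{\hat x}\psi(\hat x)$ for $\psi\in C_{\rm cs}(\widehat X)$ then follows by reducing, via Proposition \ref{uM3pn}, to a compact $\Lambda$ and invoking the local description of $\kappa$-convergence obtained in the proof of Proposition \ref{Qq1pn}: eventually the number of points of $\hat\gamma_m$ in $\Lambda$ is constant, their locations converge in $X$, and their mark sets converge in $\rho$, i.e.\ weakly over $(\mathds{R}_{+},\mathcal{T}_r)$ by Proposition \ref{uM2pn}, whence the finite sums converge. Multiplying the continuous prefactor by the continuous $F^\theta$ gives $LF^\theta\in C_{\rm b}(\widehat\Gamma)$.

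The two steps needing genuine care are the uniform bound and the membership $\psi_2\in C_{\rm cs}(\widehat X)$. The first is delicate only because a configuration may carry arbitrarily many points with nonzero mark; it is resolved by the single estimate $G e^{-G}\le 1/e$, which balances the growth of the sums against the weight $F^\theta=e^{-G}$. The second is the main subtlety: it hinges on reading ``$m$ continuous'' in Assumption \ref{Jass} as continuity on $\widehat X$ with the $r$-topology on ages, equivalently $\lim_{\alpha\to+\infty}m(x,\alpha)=m(x,0)$, in line with (\ref{Th0}). Without this, letting a particle's age escape to infinity — a $\kappa$-convergent operation, since $r(\alpha,0)=1/\alpha\to 0$ — would destroy the continuity of the departure sum, so this identification is exactly the point on which the argument turns.
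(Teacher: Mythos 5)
Your proof is correct and follows essentially the same route as the paper: the same splitting of $L$ into aging, departure and arrival parts, the same factorization $LF^\theta=F^\theta\cdot(\text{prefactor})$, the bound $|g'(\hat{x})|\le\bar{\sigma}c\,g(\hat{x})$ from (\ref{uM2z}) combined with $Ge^{-G}\le 1/e$ to tame the configuration sums, and continuity via $\psi_1,\psi_2\in C_{\rm cs}(\widehat{X})$. Your closing observation that the continuity of $m$ must be read in the $r$-topology on ages (so that $m(x,\alpha)\to m(x,0)$ as $\alpha\to+\infty$) makes explicit a point the paper leaves implicit when asserting $\psi_2\in C_{\rm cs}(\widehat{X})$, and it is indeed needed for the departure sum to be $\kappa$-continuous.
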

\begin{proof}
We consider each of the summands in (\ref{S1}) -- denoted by $L_i$,
$i=1,2,3$ -- separately. Thus,
\begin{gather}
  \label{JJ1}
  (L_3 F^\theta) (\hat{\gamma}) = F^\theta (\hat{\gamma}) \int_X
  \theta (x,0) \chi(d x),
\end{gather}
i.e., it is just the multiplication operator by a $\theta$-dependent
constant. Next,
\begin{equation}
  \label{JJ2}
(L_2 F^\theta) (\hat{\gamma}) = - \sum_{x\in \hat{\gamma}}
m(\hat{x}) \theta (\hat{x}) F^{\theta}(\hat{\gamma}\setminus
\hat{x}) = F^{\theta}(\hat{\gamma})\Psi_2(\hat{\gamma}),
\end{equation}
where
\begin{equation}
  \label{JJ3}
  \Psi_2 (\hat{\gamma}) = \sum_{\hat{x}\in \hat{\gamma}}
\psi_2(\hat{x}) = \sum_{\hat{x}\in \hat{\gamma}} m(\hat{x}) \left(
e^{g(\hat{x})} -1\right).
\end{equation}
Let us consider the following function
\begin{equation*}
  \phi_\tau (\hat{x}) = g(\hat{x}) - \tau m(\hat{x}) \left(
e^{g(\hat{x})} -1\right), \qquad \tau \geq 0.
\end{equation*}
Since each $g_{s,k,n}(\hat{x}) < 1$, see (\ref{uM12z}),  it follows
that $g(\hat{x}) \leq J_\theta$ where $J_\theta$ is just the number
of summands in the sum in (\ref{JJ}). At the same time, $m(\hat{x})
\leq m_*<\infty$, see Assumption \ref{Jass}. Taking this into
account, we set
\begin{equation}
  \label{JJ4a}
\tau_* = 1/ m_* e^{J_\theta}.
\end{equation}
Then
\begin{equation*}
\phi_{\tau_*} (\hat{x}) > 0, \qquad \hat{x}\in \widehat{X}.
\end{equation*}
Now by the simple inequality $\beta e^{-\tau \beta} \leq 1/ e \tau$,
$\tau, \beta >0$, we have, see (\ref{JJ2}) and (\ref{JJ3}),
\begin{gather}
  \label{JJ6}
(L_2 F^\theta) (\hat{\gamma}) \leq  \tau^{-1}_* \exp\left(- 1 -
\sum_{\hat{x}\in \hat{\gamma}} \phi_{\tau_*}(\hat{x})\right) \leq
m_* e^{J_\theta-1},
\end{gather}
which yields the boundedness in question. The continuity of $L_2
F^\theta$ follows by the continuity of $\Psi_2$, which in turn
follows by the fact that $\psi_2\in C_{\rm cs}(\widehat{X})$.
Finally,
\begin{gather*}
  (L_1 F^\theta)(\hat{\gamma}) =\left( - \sum_{\hat{x}\in \hat{\gamma}}
  g'(\hat{x}) \right)F^\theta(\hat{\gamma}) =: \Psi_1
  (\hat{\gamma})F^\theta(\hat{\gamma}).
\end{gather*}
By (\ref{JJ}) we have
\[
g'(x,\alpha) = \sum_{j} v_{s_j}(x) w'_{k_j,n_j} (\alpha),
\]
which yields the continuity of $\Psi_1$. At the same time, by
(\ref{uM2z}) we have
\begin{equation}
  \label{Gt0}|g'(x,\alpha)| \leq \sum_{j} v_{s_j}(x) |w'_{k_j,n_j} (\alpha)|
\leq \bar{\sigma}c g(x,\alpha),
\end{equation}
which yields
\begin{equation}
  \label{Gt}
|g'(x,\alpha)| \leq \bar{\sigma}c J_\theta,
\end{equation}
and also
\[
\left| (L_1 F^\theta)(\hat{\gamma})\right| \leq  \bar{\sigma}c/e.
\]
This completes the proof.
\end{proof}
We summarize the estimates obtained above in the following
\begin{equation}
  \label{Est}
   \sup_{\hat{\gamma}\in \widehat{\Gamma}}|(L F^\theta )(\hat{\gamma})| \leq
   \chi(|\theta(\cdot, 0)|)
    + m_*e^{J_\theta -1} + \bar{\sigma} c/e.
\end{equation}
Note that
\begin{eqnarray}
  \label{Est1}
 \chi(|\theta(x,\alpha)|) & := & \int_{X} |\theta(x,\alpha)| \chi(d x) \\[.2cm] & = & \int_{X} \left(1- e^{-g(x,\alpha)}\right) \chi(d
 x) \leq \int_{X} g(x,0) \chi(d x), \nonumber
\end{eqnarray}
see (\ref{N11jj}).

\section{The Kolmogorov Equation}

\subsection{Notations and useful estimates}
For $\theta \in \Theta$, see (\ref{JJ}), we set
\begin{eqnarray}
  \label{Q8}
  \theta_t (x,\alpha) & = & \theta (x,\alpha + t) \exp\bigg{(}M(x,\alpha) - M(x, \alpha+t)
  \bigg{)}, \\[.2cm] \nonumber
M(x,\alpha) &= & \int_0^{\alpha} m(x, \beta) d \beta.
\end{eqnarray}
Then $\widehat{X}\ni \hat{x} \mapsto \theta_t (\hat{x})$ is
continuous and compactly supported for all $t\geq 0$. Moreover, both
maps $t \mapsto \theta_t (\hat{x})$ and $\alpha \mapsto \theta_t (x,
\alpha)$ are continuously differentiable and the following holds
\begin{equation}
  \label{Di8}
  \frac{\partial}{\partial t} \theta_t (x,\alpha) = \frac{\partial}{\partial \alpha} \theta_t
  (x,\alpha) - m(x,\alpha) \theta_t (x,\alpha).
\end{equation}
Note that
\begin{equation}
  \label{DI}
  (\theta_t)_s (\hat{x}) = \theta_{t+s}(\hat{x}).
\end{equation}
Next, we define, cf. (\ref{JJ}),
\begin{equation}
  \label{JJ8}
  g_t (\hat{x}) = - \log ( 1 + \theta_t(\hat{x})).
\end{equation}
By (\ref{Q8}) it follows that
\begin{equation*}
  |\theta_t(x,\alpha)| \leq |\theta (x, \alpha+t)| = 1 - e^{-g(x,
  \alpha+t)} \leq 1- e^{-J_\theta},
\end{equation*}
where $J_\theta$ is the same as in (\ref{JJ4a}). By (\ref{JJ8}) this
yields
\begin{equation}
  \label{JJ8a}
  g_t (\hat{x}) \leq  J_\theta, \qquad  t\geq 0, \ \ \hat{x} \in
  \widehat{X}.
\end{equation}
By (\ref{Q8}) we also have
\begin{equation*}
  |\theta'_t (x,\alpha) | \leq 2 m_* + |\theta'(x,\alpha+t)| \leq
  2m_* + J_\theta,
\end{equation*}
where $m_*$ is as in (\ref{JJ4a}) and the estimate
$|\theta'(x,\alpha)| \leq |g'(x,\alpha)| \leq J_\theta$ was used,
see (\ref{JJ}) and (\ref{Gt}).

Now we define
\begin{equation}
   \label{Di7}
 F^\theta_t (\hat{\gamma})= \exp\left[\int_0^t \left( \int_X  \theta (x,\alpha) e^{-M(x,\alpha)}  \chi( d x) \right) d
   \alpha \right] F^{\theta_t}(\hat{\gamma}),
 \end{equation}
with $\theta_t$ as in (\ref{Q8}). Clearly, $F^\theta_t\in C_{\rm
b}(\widehat{\Gamma})$ for all $t>0$ and $\theta\in \Theta$, and
\begin{equation}
  \label{Di7y}
0< F^\theta_t (\hat{\gamma}) \leq 1, \qquad \hat{\gamma} \in
\widehat{\Gamma},
\end{equation}
and, for all $t,s\geq 0$, the following holds, see (\ref{DI}),
\begin{equation}
  \label{Di7u}
  F^\theta_{t+s} = \exp\left(\int_0^s \int_X \theta_u(x,0) \chi(d x) d u\right)F^{\theta_s}_t.
\end{equation}
Let us prove that $LF^\theta_t \in C_{\rm b}(\widehat{\Gamma})$ for
all $\theta \in \Theta$ and $t\geq 0$. As in the proof of
Proposition \ref{Ju1pn} we divide $L$ into three parts. Similarly as
in (\ref{JJ1}) we have
\begin{equation*}
  \left|(L_3 F_t^\theta)(\hat{\gamma})  \right| \leq \chi(|\theta
  (\cdot , t)|) \leq \chi(g(\cdot, 0)),
\end{equation*}
see (\ref{Est1}) and (\ref{Di7y}). Since $g_t(\hat{x})$ satisfies
(\ref{JJ8a}) for all $t\geq 0$, it follows that
\[
\left|(L_2 F_t^\theta)(\hat{\gamma})  \right| \leq m_*e^{J_\theta
-1},
\]
holding for all $t>0$, see (\ref{JJ6}). The estimate of $\left|(L_1
F_t^\theta)(\hat{\gamma})  \right|$ is obtained as follows. Denote
\[
q_t(x,\alpha) = \exp\left(-\int_\alpha^{\alpha+t} m(x,\beta) d
\beta\right).
\]
Then by (\ref{Q8}) we have
\begin{equation}
  \label{JJ10}
  \Phi(g_t (x,\alpha)) = q_t(x,\alpha ) \Phi(g(x,\alpha+t)), \qquad
  \Phi (b) := 1-e^{-b}, \ \ b\geq 0,
\end{equation}
by which we get that $g_t (x,\alpha) \leq g (x,\alpha+t)$. Let us
prove that
\begin{equation}
  \label{JJ11}
g_t (x,\alpha) e^{-g_t (x,\alpha)} \geq q_t(x,\alpha )  g
(x,\alpha+t) e^{-g (x,\alpha+t)}.
\end{equation}
By (\ref{JJ10}) this is equivalent to the fact that the function
$b\mapsto b/(e^b -1)$ is decreasing, which is obviously the case.
Now we take the $\alpha$-derivative from both sides of (\ref{JJ10})
and obtain
\begin{gather*}
  g'_t(x,\alpha) = q_t(x,\alpha )  g'
(x,\alpha+t) \exp\bigg{( }g_t(x,\alpha) - g(x, \alpha+t)\bigg{)} \\[.2cm] \nonumber +
m(x,\alpha+t) (e^{g_t(x,\alpha)} -1) - m(x,\alpha)
(e^{g_t(x,\alpha)} -1),
\end{gather*}
that can be estimated as follows
\begin{gather}
  \label{JJ12a}
 | g'_t(x,\alpha)| = \bar{\sigma}c q_t(x,\alpha )
g(x,\alpha+t) \exp\bigg{( }g_t(x,\alpha) - g(x, \alpha+t)\bigg{)} \\[.2cm] \nonumber +
2m_* (e^{g_t(x,\alpha)} -1) \leq g_t(x,\alpha) e^{J_\theta}
(\bar{\sigma}c + 2 m_*),
\end{gather}
where we used (\ref{Gt0}), (\ref{JJ8a}) and (\ref{JJ11}). Now we
proceed as in obtaining (\ref{Est}), which eventually yields
\begin{equation}
  \label{JJ13}
  \left| (LF^\theta_t)(\hat{\gamma})\right| \leq \chi(g(\cdot, 0)) +
  m_*e^{J_\theta-1} + (\bar{\sigma}c + 2m_*)e^{J_\theta}=: \ell_\theta.
\end{equation}
The key property of the latter estimate is that it is uniform in
$t$. However, it does depend on $\theta$.

Along with the estimates derived above, we will use also the
following. For $\theta\in \Theta$, the corresponding $g$ has the
form as in (\ref{JJ}). By (\ref{uM1a}) and (\ref{uM2}) we have that
$$e^{-\bar{\sigma}2^{2/3}/3 }w_{k,n}(0) \leq  w_{k,n}(\alpha)
\leq w_{k,n}(0),$$ which means that, cf. (\ref{N11jj}),
\begin{equation}
  \label{JJ13a}
 \bar{ c} g(x,0) \leq g(x,\alpha) \leq g(x,0), \qquad  \bar{ c} := e^{-\bar{\sigma}2^{2/3}/3
 }.
\end{equation}
For $\Phi$ as in (\ref{JJ10}), we have $b\geq \Phi(b) \geq b -
b^2/2$, $b\geq 0$, which we use together with (\ref{JJ13a}) to
obtain the following
\begin{gather}
  \label{JJ13b}
  g_t(\hat{x}) \geq q_t(\hat{x}) \Phi (\bar{c} g(x,0)) \geq e^{-m_*
  t } \Phi(\bar{c} g(x,0)/J_\theta)\\[.2cm] \nonumber \geq e^{-m_*
  t } \frac{\bar{c} g(x,0)}{J_\theta} \left( 1 - \bar{c}
  g(x,0)/2J_\theta \right)\\[.2cm] \nonumber  \geq e^{-m_*
  t } \bar{c}_\theta g(x,0), \quad \bar{c}_\theta :=
  \bar{c}/2J_\theta,
\end{gather}
where we have used the fact that $g(x,0) \leq J_\theta$, see
(\ref{JJ8a}).
\subsection{The operator}

We fix $\hat{\gamma}\in \widehat{\Gamma}$ and calculate the
$t$-derivative of (\ref{Di8}). This yields
\begin{eqnarray}
  \label{Di9}
\frac{\partial}{\partial t} F^\theta_t (\hat{\gamma}) & = & \left(
\int_X \theta (x,t) e^{-M(x,t)} \chi(d x)\right)F^\theta_t
(\hat{\gamma})  + \sum_{\hat{x}\in \hat{\gamma}} \frac{\partial
\theta (\hat{x})}{\partial t} F^\theta_t (\hat{\gamma}\setminus
\hat{x}) \qquad \\[.2cm] \nonumber & = & \left(
\int_X \theta_t (x,0) \chi(d x)\right)F^\theta_t (\hat{\gamma})  +
\sum_{\hat{x}\in \hat{\gamma}} \frac{\partial }{\partial \alpha_x}
F^\theta_t (\hat{\gamma}) \\[.2cm] \nonumber & + & \sum_{\hat{x}\in
\hat{\gamma}} m(\hat{x}) \left[F^\theta_t (\hat{\gamma}\setminus
\hat{x}) - F^\theta_t (\hat{\gamma})\right] = (L F^\theta_t)
(\hat{\gamma}).
\end{eqnarray}
This means that we have found a solution of the Kolmogorov equation
for (\ref{S1}) in the following sense. It is a map $t\mapsto F_t \in
C_{\rm b}(\widehat{\Gamma})$, which is pointwise in $\hat{\gamma}$
continuously $t$-differentiable and such that the equality in
(\ref{KE}) holds. Our aim now is to solve (\ref{KE}) in a suitable
Banach space.  Recall that the paths $t\mapsto \theta_t$ have the
flow property (\ref{DI}), see also (\ref{Di7u}).

Below, by saying of a property of $\theta_s$, $s\geq 0$, holding for
all $\theta$, we attribute that this property to all $\theta_s$
given in (\ref{Q8}) with $\theta$ taken from $\Theta$.
\begin{proposition}
  \label{Ju2pn}
  For each $\theta\in \Theta$ and $s\geq 0$, it follows that $F_t^{\theta_s} \to
  F^{\theta_s}$ as $t\to 0$ in the norm of $C_{\rm
  b}(\widehat{\Gamma})$.
\end{proposition}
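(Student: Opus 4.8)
The plan is to exploit the explicit form \eqref{Di7} together with the flow property \eqref{DI}. Applying the construction \eqref{Di7} with the base function $\theta_s$ in place of $\theta$ and using $(\theta_s)_t=\theta_{s+t}$, so that $F^{(\theta_s)_t}=F^{\theta_{s+t}}$, I would first record the identity
\begin{equation*}
F^{\theta_s}_t(\hat\gamma)=e^{A_t}\,F^{\theta_{s+t}}(\hat\gamma),\qquad A_t:=\int_0^t\int_X \theta_s(x,\alpha)\,e^{-M(x,\alpha)}\,\chi(dx)\,d\alpha .
\end{equation*}
Since each $\theta_s$ is supported, in the $x$ variable, in the fixed compact set $K:=\bigcup_j\operatorname{supp}v_{s_j}$ and $|\theta_s|\le 1$, the Radon property of $\chi$ gives $|A_t|\le t\,\chi(K)$, whence $e^{A_t}\to 1$ as $t\to 0$. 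Splitting
\begin{equation*}
F^{\theta_s}_t-F^{\theta_s}=e^{A_t}\bigl(F^{\theta_{s+t}}-F^{\theta_s}\bigr)+\bigl(e^{A_t}-1\bigr)F^{\theta_s},
\end{equation*}
the second summand is $O(|A_t|)$ in $\sup$-norm because $0<F^{\theta_s}\le 1$. Thus everything reduces to showing $\sup_{\hat\gamma}\bigl|F^{\theta_{s+t}}(\hat\gamma)-F^{\theta_s}(\hat\gamma)\bigr|\to 0$.

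For this I would set $S_u(\hat\gamma):=\sum_{\hat x\in\hat\gamma}g_u(\hat x)$, so that $F^{\theta_u}(\hat\gamma)=e^{-S_u(\hat\gamma)}$ by \eqref{N10j} and \eqref{JJ8}, and use the elementary estimate $|e^{-a}-e^{-b}|\le e^{-\min\{a,b\}}|a-b|$ to obtain
\begin{equation*}
\bigl|F^{\theta_{s+t}}(\hat\gamma)-F^{\theta_s}(\hat\gamma)\bigr|\le e^{-\min\{S_s,S_{s+t}\}}\,\bigl|S_{s+t}-S_s\bigr|\le e^{-\min\{S_s,S_{s+t}\}}\sum_{\hat x\in\hat\gamma}\bigl|g_{s+t}(\hat x)-g_s(\hat x)\bigr|.
\end{equation*}
Each such sum is finite for fixed $\hat\gamma$ (only $x\in K$ contribute and $\hat\gamma$ is locally finite), but the number of contributing terms is not bounded over $\hat\gamma$; this absence of a uniform cardinality bound is the main obstacle, so the naive triangle inequality alone is not enough.

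The hard part will be resolved by a \emph{proportional} pointwise estimate that lets the factor $e^{-\min\{S_s,S_{s+t}\}}$ absorb the growth in the number of terms. Differentiating $g_u=-\log(1+\theta_u)$ in $u$ and invoking the transport equation \eqref{Di8}, I expect $\partial_u g_u=g_u'-m\,(e^{g_u}-1)$; combining the bound on $g_u'$ from \eqref{JJ12a} with $g_u\le J_\theta$ from \eqref{JJ8a} (and $e^{g_u}-1\le g_u e^{J_\theta}$) then yields $|\partial_u g_u(\hat x)|\le D\,g_u(\hat x)$ with $D:=e^{J_\theta}(\bar{\sigma} c+3m_*)$. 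A Gronwall-type integration over $u\in[s,s+t]$ gives $g_u(\hat x)\le g_s(\hat x)e^{D(u-s)}$ and hence
\begin{equation*}
\bigl|g_{s+t}(\hat x)-g_s(\hat x)\bigr|\le\bigl(e^{Dt}-1\bigr)\,g_s(\hat x),\qquad \hat x\in\widehat{X},
\end{equation*}
which holds for every $\hat x$ (trivially where $g_s(\hat x)=0$, since the structure of $g$ forces $g_u(\hat x)\equiv0$ there). Summing over $\hat x\in\hat\gamma$ gives $|S_{s+t}-S_s|\le(e^{Dt}-1)S_s$ and $\min\{S_s,S_{s+t}\}\ge(2-e^{Dt})S_s$, so that, with $c_t:=2-e^{Dt}\in(0,1]$ for $t$ small,
\begin{equation*}
\sup_{\hat\gamma}\bigl|F^{\theta_{s+t}}(\hat\gamma)-F^{\theta_s}(\hat\gamma)\bigr|\le\bigl(e^{Dt}-1\bigr)\sup_{y\ge 0}y\,e^{-c_t y}=\frac{e^{Dt}-1}{e\,(2-e^{Dt})}\xrightarrow[t\to0]{}0 .
\end{equation*}
Together with $e^{A_t}\to1$ this yields the claimed convergence $F^{\theta_s}_t\to F^{\theta_s}$ in $C_{\mathrm b}(\widehat\Gamma)$.
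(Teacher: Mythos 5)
Your proof is correct, but it takes a genuinely different route from the paper's. The paper's argument is a two-line consequence of machinery it has already set up: by \eqref{Di9} the map $u\mapsto F^\theta_u(\hat\gamma)$ satisfies $\partial_u F^\theta_u=LF^\theta_u$ pointwise, so it writes $F^{\theta_s}_t-F^{\theta_s}$ as a constant times $\int_s^{t+s}(LF^\theta_u)(\hat\gamma)\,du$ and invokes the $t$-uniform bound $\|LF^\theta_u\|\le\ell_\theta$ of \eqref{JJ13}, giving at once $\|F^{\theta_s}_t-F^{\theta_s}\|\le t\,\ell_\theta\exp\bigl(-\int_0^s\int_X\theta_u(x,0)\chi(dx)\,du\bigr)$. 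You instead bypass $L$ entirely: you peel off the scalar prefactor $e^{A_t}$, reduce to $\sup_{\hat\gamma}|e^{-S_{s+t}}-e^{-S_s}|$, and control the exponent by the Gronwall-type proportional bound $|g_{s+t}(\hat x)-g_s(\hat x)|\le(e^{Dt}-1)g_s(\hat x)$ followed by the $y e^{-cy}\le 1/(ec)$ device. The underlying pointwise estimates are the same ones the paper uses to prove \eqref{JJ13} — namely \eqref{JJ12a}, \eqref{JJ8a} and the identity $\partial_u g_u=g'_u-m(e^{g_u}-1)$ coming from \eqref{Di8} — so in substance you are re-deriving the relevant part of \eqref{JJ13} in a repackaged form. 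What your version buys is a self-contained and arguably more transparent explanation of why the bound is uniform in $\hat\gamma$ despite the unbounded number of terms in $S_s$ (the exponential factor absorbs the linear growth); what the paper's version buys is brevity and reusability, since the same integral representation and the bound \eqref{JJ13} are used again in Proposition \ref{Ju3pn} and Lemma \ref{i1lm}. Your final bound is $O(t)$, matching the paper's rate. Two small points worth making explicit if you write this up: the justification that $g_s(\hat x)=0$ forces $g_u(\hat x)\equiv 0$ (which holds because $w_{k,n}>0$ everywhere, so $g(x,\cdot)$ vanishes for one $\alpha$ iff all $v_{s_j}(x)=0$), and the restriction to $t<\ln 2/D$ so that $c_t=2-e^{Dt}>0$ — both of which you do address, if briefly.
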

\begin{proof}
For each $\hat{\gamma}$, $s\geq 0$ and $\theta\in \Theta$, by
(\ref{Di9}) it follows that
\begin{gather}
  \label{Yi}
F_t^{\theta_s} (\hat{\gamma}) - F^{\theta_s}(\hat{\gamma}) =
\exp\left(- \int_0^s \int_X \theta_{u}(x,0) \chi(d x) d u
\right)\left[ F^\theta_{t+s} (\hat{\gamma}) - F^\theta_{s}
(\hat{\gamma}) \right] \\[.2cm] \nonumber
= \exp\left(- \int_0^s \int_X \theta_{u}(x,0) \chi(d x) d u \right)
\int_s^{t+s} (LF_u^{\theta}) (\hat{\gamma}) d u,
\end{gather}
which by (\ref{JJ13}) yields
\begin{equation}
  \label{Yie}
\sup_{\hat{\gamma}\in \widehat{\Gamma}} \left|F_t^{\theta_s}
(\hat{\gamma}) - F^{\theta_s}(\hat{\gamma}) \right| \leq t
\ell_\theta \exp\left(- \int_0^s \int_X \theta_u(x,0) \chi(d x) d u
\right).
\end{equation}
This completes the proof.
\end{proof}
The next statement is a refinement of the one just proved.
\begin{proposition}
  \label{Ju3pn}
  For each $\theta\in \Theta$ and $s\geq 0$, it follows that $LF_t^{\theta_s} \to
  LF^{\theta_s}$ as $t\to 0$ in the norm of $C_{\rm
  b}(\widehat{\Gamma})$.
\end{proposition}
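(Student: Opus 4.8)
The plan is to first strip off the $s$-dependence by means of the flow identities, and then to establish the $t$-continuity of $v\mapsto LF^\theta_v$ by combining single-particle increment estimates with the exponential decay of the products $F^\theta_v$.

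First I would reduce the statement to the right-continuity of $v\mapsto LF^\theta_v$ at $v=s$. By (\ref{Di7u}) one has $F^{\theta_s}_t=a_s\,F^\theta_{t+s}$ with the ($\hat\gamma$- and $t$-independent) constant $a_s=\exp\big(-\int_0^s\int_X\theta_u(x,0)\chi(dx)\,du\big)\in(0,\infty)$; taking $t=0$ (where $\theta_0=\theta$) gives $F^{\theta_s}=a_s F^\theta_s$. Since $L$ is linear, $LF^{\theta_s}_t-LF^{\theta_s}=a_s\big(LF^\theta_{t+s}-LF^\theta_s\big)$, so it suffices to prove $\|LF^\theta_{t+s}-LF^\theta_s\|_\infty\to0$ as $t\to0^+$; as $s\ge0$ is arbitrary this is precisely the right-continuity claimed, and now every estimate of Section~3.1 is available since $\theta\in\Theta$.

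The core of the proof is to isolate two mechanisms that render each term uniformly bounded. Writing $G(\hat\gamma):=\sum_{\hat x\in\hat\gamma}g(x,0)$ and $B_v(\hat\gamma):=\sum_{\hat x\in\hat\gamma}g_v(\hat x)$, the bound $F^\theta_v(\hat\gamma)\le e^{-\lambda G(\hat\gamma)}$ with $\lambda:=e^{-m_*(s+1)}\bar c_\theta$ follows from (\ref{JJ13b}) for $v\in[s,s+1]$, which yields the two workhorse estimates valid for any $h\in C_{\rm cs}(\widehat X)$ with $0\le h(\hat x)\le H\,g(x,0)$:
\begin{gather*}
\sum_{\hat x\in\hat\gamma}h(\hat x)F^\theta_v(\hat\gamma)\le H\,G(\hat\gamma)\,e^{-\lambda G(\hat\gamma)}\le H/(e\lambda),\\
\sum_{\hat x\in\hat\gamma}h(\hat x)F^\theta_v(\hat\gamma\setminus\hat x)\le \tfrac{H}{\lambda}\,F^\theta_v(\hat\gamma)\sum_{\hat x\in\hat\gamma}g_v(\hat x)e^{g_v(\hat x)}\le \tfrac{H}{\lambda}\,e^{J_\theta-1},
\end{gather*}
the last one using $g(x,0)\le g_v(\hat x)/\lambda$ and $\sum_i b_i e^{-\sum_{j\ne i}b_j}\le e^{J_\theta-1}$ for $b_i=g_v(\hat x_i)\le J_\theta$. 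In parallel, from (\ref{Di8}), (\ref{JJ12a}), (\ref{Gt0}) and $|\theta_v|\le g_v\le g(x,0)$ I would derive the \emph{multiplicative} time-increment bounds $|\theta_u-\theta_s|(\hat x)\le K|u-s|\,g(x,0)$, $|g_u-g_s|(\hat x)\le K|u-s|\,g(x,0)$ and $|g_v'|\le K g(x,0)$, and consequently, with $C_v$ the prefactor in (\ref{Di7}), $|C_u-C_s|\to0$ and $|e^{-B_u}-e^{-B_s}|(\hat\gamma)\le K|u-s|\,G(\hat\gamma)e^{-\lambda G(\hat\gamma)}$.

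With these in hand I would treat $L=L_1+L_2+L_3$ separately. For $L_3F^\theta_v(\hat\gamma)=F^\theta_v(\hat\gamma)\int_X\theta_v(x,0)\chi(dx)$ the claim is immediate from $\|F^\theta_u-F^\theta_s\|_\infty\to0$ (Proposition~\ref{Ju2pn}) and $|\int_X(\theta_u-\theta_s)(x,0)\chi(dx)|\le K|u-s|\,\chi(g(\cdot,0))\to0$. For $L_2F^\theta_v=-\sum_{\hat x}m(\hat x)\theta_v(\hat x)F^\theta_v(\hat\gamma\setminus\hat x)$ I would split each summand as $(\theta_u-\theta_s)F^\theta_u(\hat\gamma\setminus\hat x)+\theta_s\big(F^\theta_u(\hat\gamma\setminus\hat x)-F^\theta_s(\hat\gamma\setminus\hat x)\big)$ and, after the further split $F^\theta_u-F^\theta_s=(C_u-C_s)e^{-B_u}+C_s(e^{-B_u}-e^{-B_s})$, recognize every resulting sum as one of the two workhorse forms (with $h=m\,|\theta_u-\theta_s|$, respectively $h=m\,|\theta_s|$) times a factor tending to $0$, the $e^{-B_u}-e^{-B_s}$ contribution being controlled through $G^2e^{-\lambda G}\le 4/(e\lambda)^2$. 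The term $L_1F^\theta_v=-\big(\sum_{\hat x}g_v'(\hat x)\big)F^\theta_v(\hat\gamma)$ is handled identically, splitting $g_u'F^\theta_u-g_s'F^\theta_s=(g_u'-g_s')F^\theta_u+g_s'(F^\theta_u-F^\theta_s)$. The single genuine obstacle is the increment $|g_u'-g_s'|(\hat x)$: differentiating in $v$ the identity for $g_v'$ behind (\ref{JJ12a}) produces the factor $m(x,\alpha+v)$, whose $v$-regularity is by Assumption~\ref{Jass} only the modulus $\varkappa$ of (\ref{vkpa}), not differentiability. I would therefore estimate this increment directly, bounding the $q_v$-, $g(x,\cdot+v)$- and $g_v$-dependent pieces by $K|u-s|\,g(x,0)$ and the $m$-dependent piece by $\big(\varkappa(|u-s|)+K|u-s|\big)e^{J_\theta}g(x,0)$, so that $|g_u'-g_s'|(\hat x)\le\eta(u,s)\,g(x,0)$ with $\eta(u,s)\to0$. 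This gives continuity with no rate — which is all that is claimed — and feeds into the $L_1$ estimate exactly like the other multiplicative bounds. Collecting the three parts yields $\|LF^\theta_{t+s}-LF^\theta_s\|_\infty\to0$, completing the argument.
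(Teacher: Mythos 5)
Your proof is correct and follows essentially the same route as the paper's: the same reduction via (\ref{Di7u}) to the right-continuity of $v\mapsto LF^\theta_v$, the same three-way split of $L$, the same multiplicative increment bounds of the form $Kt\,g(x,0)$ played against the decay of $\Psi_0 e^{-c\Psi_0}$, and the same isolated use of the modulus $\varkappa$ from Assumption \ref{Jass} for the $m$-dependent part of the $g'$-increment in the $L_1$ term. The only cosmetic difference is that you fix a uniform exponent $\lambda$ on $v\in[s,s+1]$ and use $G^2e^{-\lambda G}\le 4/(e\lambda)^2$, where the paper instead keeps the $t$-dependent exponent $\bar c_\theta(t)$ and restricts to $t\le t_\theta$ via a $t^{1/3}$ device; both yield the same uniform bounds.
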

\begin{proof}
First of all we note that the equality in the first line of
(\ref{Yi}) allows one to obtain the property in question by showing
that $LF_{t+s}^{\theta} \to
  LF^{\theta}_s$ in the same sense.

As in the proof of Proposition \ref{Ju1pn}, we split $L$ into three
parts and consider each of them separately. Fix $\theta \in \Theta$
and then denote
\begin{equation}
  \label{JJ14}
  \dot{\eta }(u) = \int_X \theta_u(x,0) \chi(dx), \qquad \eta(t)
  =\int_0^t  \dot{\eta}(u) du.
\end{equation}
Then
\begin{eqnarray}
  \label{JJ15}
 \left|(L_3 F_{t+s}^\theta) (\hat{\gamma})- (L_3 F^\theta_s) (\hat{\gamma}) \right|
 & \leq & \left|\dot{\eta}(t+s) e^{\eta(t+s)} - \dot{\eta}(s) e^{\eta(s)} \right| \\[.2cm] \nonumber &
 + &
 |\dot{\eta}(s)| e^{\eta(s)} \left| F^{\theta_{t+s}}(\hat{\gamma})-
 F^{\theta_s}(\hat{\gamma})\right| \\[.2cm] \nonumber & =: & I_1(t) + I_2(t),
\end{eqnarray}
see (\ref{Di7y}). Then $I_1(t)\to 0$ as $t\to 0$ since
$\dot{\eta}(t) e^{\eta(t)}$ is a continuous function of $t$. At the
same time, $I_2(t)$ can be estimated as in (\ref{Yie}). This yields
the proof for $L_3$.

Now we proceed to $L_2$, for which it follows that
\begin{gather}
  \label{JJ16}
(L_2 F_t^{\theta_s}) (\hat{\gamma})- (L_2 F^{\theta_s})
(\hat{\gamma}) = J_t
(\hat{\gamma}) + K_t (\hat{\gamma}), \\[.2cm]\nonumber J_t
(\hat{\gamma}) = \sum_{\hat{x}\in \hat{\gamma}} \frac{m(\hat{x}) (-
\theta_{t+s}(\hat{x}))}{1+ \theta_{t+s}(\hat{x})}[
F^{\theta_s}_t(\hat{\gamma}) - F^{\theta_s}(\hat{\gamma})]
\\[.2cm] \nonumber K_t (\hat{\gamma}) = F^{\theta_s}(\hat{\gamma}) \sum_{\hat{x}\in \hat{\gamma}} \frac{m(\hat{x})
(\theta_s(\hat{x})- \theta_{t+s}(\hat{x}))}{(1+
\theta_{t+s}(\hat{x}))(1+ \theta_s(\hat{x}))}.
\end{gather}
By (\ref{Di7}) and (\ref{JJ14}) we have
\begin{eqnarray}
  \label{JJ17}
F^{\theta_s}_t(\hat{\gamma}) - F^{\theta_s}(\hat{\gamma}) & = &
(e^{\eta(t+s) -\eta(s)}-1) F^{\theta_{t+s}}(\hat{\gamma}) \\[.2cm]
&+ & F^{\theta_{t+s}}(\hat{\gamma}) - F^{\theta_s} (\hat{\gamma})=:
\Upsilon_1 (t, \hat{\gamma}) + \Upsilon_2 (t, \hat{\gamma}).
\nonumber
\end{eqnarray}
By (\ref{Est1}) and (\ref{Q8}), (\ref{JJ14}) it follows that
$$\left|e^{\eta(t+s) - \eta(s)}-1\right| \leq t\chi(g(\cdot, 0)),$$ which then
yields
\begin{equation}
  \label{JJ18}
\left|\Upsilon_1 (t, \hat{\gamma})\right| \leq t\chi(g(\cdot, 0))
\exp\left( - \sum_{\hat{x}\in \hat{\gamma}} g_{t+s}(\hat{x})\right).
\end{equation}
To estimate $\Upsilon_2$ we write
\begin{equation*}
 h_t(\hat{x}) =\min\{g_{t+s}(\hat{x}); g_s(\hat{x})\}.
\end{equation*}
Then by (\ref{JJ12a}) and (\ref{JJ13b}) we get
\begin{gather}
  \label{JJ18b}
\left|\Upsilon_2(t, \hat{\gamma}) \right| \leq \exp\left( -
\sum_{\hat{x}\in \hat{\gamma}} h_t (\hat{x}) \right)\sum_{\hat{x}\in
\hat{\gamma}} \left|g_{t+s} (\hat{x}) -
g_s(\hat{x})\right|\\[.2cm] \nonumber \leq e^{J_\theta} (\bar{\sigma} c + 2 m_*) \exp\left( -
\sum_{\hat{x}\in \hat{\gamma}} h_t (\hat{x}) \right)\sum_{\hat{x}\in
\hat{\gamma}}\int_s^{t+s} g_u(x,\alpha) du, \\[.2cm] \nonumber \leq e^{J_\theta} (\bar{\sigma} c + 2
m_*) \exp\left( -e^{-m_* t} \bar{c}_\theta \sum_{\hat{x}\in
\hat{\gamma}} g(x,0) \right) \sum_{\hat{x}\in \hat{\gamma}} t g(x,0)
\\[.2cm] \nonumber =:t C_\theta e^{-\bar{c}_\theta (t)
\Psi_0(\hat{\gamma})} \Psi_0(\hat{\gamma}), \qquad \bar{c}_\theta
(t) := e^{- m_*t} \bar{c}_\theta, \quad \  \Psi_0 (\hat{\gamma}):=
\sum_{\hat{x}\in \hat{\gamma}} g(x,0).
\end{gather}
At the same time,
\begin{gather*}
 0\leq  \sum_{\hat{x}\in \hat{\gamma}} \frac{m(\hat{x})
 (-\theta_{t+s}(\hat{x})}{1+  \theta_{t+s}(\hat{x})} \leq m_* e^{J_\theta}
 \sum_{\hat{x}\in \hat{\gamma}} g_{t+s}(\hat{x}) =: m_* e^{J_\theta}\Psi_1(t,
 \hat{\gamma}).
 \end{gather*}
Thereafter, we have
\begin{gather*}
 \left|J_t(\hat{\gamma})\right| \leq t m_* e^{J_\theta}\chi(g(\cdot,0))\Psi_1(t,
 \hat{\gamma})  e^{-\Psi_1(t, \hat{\gamma})}
 \\[.2cm] \nonumber
 + t C_\theta   m_* e^{J_\theta} \Psi_1(t, \hat{\gamma}) \Psi_0 (\hat{\gamma}) e^{- \bar{c}_\theta (t) \Psi_0(\hat{\gamma})} =:
 \Pi_1 (t, \hat{\gamma}) + \Pi_2 (t, \hat{\gamma}),
\end{gather*}
where $C_\theta$, $\bar{c}_\theta (t)$ and $\Psi_0$ are as in
(\ref{JJ18b}). Then
\[
\Pi_1 (t, \hat{\gamma}) \leq t m_* e^{J_\theta-1}\chi(g(\cdot,0))
\to 0 , \qquad t\to +\infty.
\]
Let $t_\theta>0$ be the (unique) solution of $\bar{c}_\theta e^{-
m_* t} = 2 t^{1/3}$. Then for $t\leq t_\theta$, we have
\begin{gather}
  \label{JJ21}
\Pi_2 (t, \hat{\gamma}) \leq t C_\theta m_* e^{J_\theta}
[\Psi_0(\hat{\gamma})]^2 e^{-\bar{c}_\theta (t)
\Psi_0(\hat{\gamma})} \\[.2cm] \nonumber \leq t^{1/3} C_\theta m_* e^{J_\theta-2}
\exp\left( - (\bar{c}_\theta(t) - 2 t^{1/3} )
\Psi_0(\hat{\gamma})\right) \leq t^{1/3} C_\theta m_*
e^{J_\theta-2},
\end{gather}
which yields the convergence
\begin{equation*}
  \sup_{\hat{\gamma}\in \widehat{\Gamma}} |J_t(\hat{\gamma})| \to 0,
  \qquad t\to 0.
\end{equation*}
Now we turn to $K_t(\hat{\gamma})$. First, by (\ref{Di8}) we have
\begin{gather}
  \label{JJ21b}
  \left|\theta_s(\hat{x})- \theta_{t+s}(\hat{x}) \right| =
  \left|\int_s^{t+s}
  \left( \frac{\partial}{\partial \alpha} \theta_u (x,\alpha) - m(x,\alpha) \theta_u(x,\alpha)\right) d u
  \right|.
\end{gather}
Next, by (\ref{JJ8}) and (\ref{JJ12a}) it follows that
\begin{gather*}
\left| \frac{\partial}{\partial \alpha} \theta_u (x,\alpha)\right| =
e^{-g_u(x,\alpha)} \left| g'_u (x, \alpha )\right|  \leq
e^{J_\theta}(\bar{\sigma}c + 2 m_*) g_u(x,\alpha) \\[.2cm]\nonumber \leq
e^{J_\theta}(\bar{\sigma}c + 2 m_*) g(x,0),
\end{gather*}
and also
\begin{gather*}
\left|m(x,\alpha) \theta_u (x,\alpha)\right| \leq m_* g_u(x,\alpha)
\leq m_* g(x,0).
\end{gather*}
The latter two estimates yield
\begin{gather*}
  {\rm LHR}(\ref{JJ21b}) \leq e^{J_\theta}(\bar{c} +3m_*) g(x,0).
\end{gather*}
By (\ref{JJ13a}) this yields
\begin{gather*}
  K_t(\hat{\gamma}) \leq t m_* (\bar{\sigma}+ 3 m_*) e^{3J_\theta}
  \Psi_0 (\hat{\gamma}) e^{- \bar{c} \Psi_0(\hat{\gamma})} \\[.2cm]
  \nonumber \leq t m_* (\bar{\sigma}+ 3 m_*) e^{3J_\theta-1}/\bar{c}
  \to 0 , \qquad t\to +\infty.
  \end{gather*}
By (\ref{JJ16}) this completes the proof for $L_2$.

Next, we write
\begin{gather}
  \label{JJ24}
  (L_1 F^{\theta_s}_t)(\hat{\gamma}) - (L_1 F^{\theta_s})(\hat{\gamma})
  = Q_t(\hat{\gamma}) + R_t (\hat{\gamma}), \\[.2cm] \nonumber
  Q_t (\hat{\gamma}) = \sum_{\hat{x}\in \hat{\gamma}}
  \theta'_{t+s}(\hat{x}) \left[F^{\theta_s}_t(\hat{\gamma}\setminus \hat{x}) - F^{\theta_s}(\hat{\gamma}\setminus \hat{x})
  \right], \\[.2cm] \nonumber R_t(\hat{\gamma}) = \sum_{\hat{x}\in
  \hat{\gamma}}\left[\theta'_{t+s}(\hat{x}) - \theta'_s(\hat{x}) \right] F^{\theta_s}(\hat{\gamma}\setminus
  \hat{x}).
\end{gather}
Then
\begin{gather*}
  Q_t(\hat{\gamma}) = - \sum_{\hat{x}\in\hat{\gamma}} g'_{t+s}(\hat{x}) \left[
  F_t^{\theta_s}(\hat{\gamma}) - F^{\theta_s} (\hat{\gamma})\right]\\[.2cm] \nonumber
  + F^{\theta_s}(\hat{\gamma}) \sum_{\hat{x}\in\hat{\gamma}} g'_{t+s}(\hat{x})
  \left[ e^{g_s(\hat{x}) - g_{t+s}(\hat{x}) } - 1\right]=:
  Q^{(1)}_t(\hat{\gamma}) + Q^{(2)}_t (\hat{\gamma}).
\end{gather*}
By (\ref{JJ12a}) and then by (\ref{JJ17}), (\ref{JJ18}),
(\ref{JJ18b}) we get
\begin{gather*}
\left|Q^{(1)}_t(\hat{\gamma})\right| \leq e^{J_\theta} (\bar{\sigma}
c + 2 m_*) \left(\Upsilon_1 (t, \hat{\gamma}) + \Upsilon_2 (t,
\hat{\gamma})\right) \sum_{\hat{x}\in \hat{\gamma}} g_{t+s}
(\hat{x})\\[.2cm] \nonumber \leq t \chi(g(\cdot, 0)) e^{J_\theta} (\bar{\sigma}
c + 2 m_*)\left( \sum_{\hat{x}\in \hat{\gamma}} g_{t+s}
(\hat{x})\right)
\exp\left( - \sum_{\hat{x}\in \hat{\gamma}} g_{t+s} (\hat{x}) \right)\\[.2cm]
\nonumber +t C_\theta e^{J_\theta} (\bar{\sigma} c + 2 m_*)
e^{-\bar{c}_\theta (t) \Psi_0(\hat{\gamma})}
[\Psi_0(\hat{\gamma})]^2 =: \Xi^{(1)}_t (\hat{\gamma}) + \Xi^{(2)}_t
(\hat{\gamma}).
\end{gather*}
Then
\[
\Xi^{(1)}_t (\hat{\gamma}) \leq  t \chi(g(\cdot, 0)) e^{J_\theta-1}
(\bar{\sigma} c + 2 m_*) \to 0, \qquad t\to 0,
\]
and also, cf. (\ref{JJ21}),
\begin{gather*}
\Xi^{(2)}_t (\hat{\gamma}) \leq t^{1/3} C_\theta e^{J_\theta-2}
(\bar{\sigma} c + 2 m_*) \exp\left( - (\bar{c}_\theta(t) - 2 t^{1/3}
) \Psi_0(\hat{\gamma})\right)\\[.2cm] \nonumber \leq t^{1/3} C_\theta m_*
e^{J_\theta-2}(\bar{\sigma} c + 2 m_*),
\end{gather*}
for $t\leq t_\theta$. The latter two estimates yield
\begin{equation}
  \label{JJ28}
  \sup_{\hat{\gamma}\in \widehat{\Gamma}} |Q^{(1)}_t(\hat{\gamma}) |
  \to 0, \qquad t\to 0.
\end{equation}
Next, by (\ref{JJ8a}), (\ref{JJ12a}) and (\ref{JJ13a}) we have
\begin{gather}
  \label{JJ29}
  \left| Q^{(2)}_t(\hat{\gamma})\right| \leq e^{J_\theta}
  F^{\theta_s}(\hat{\gamma}) \sum_{\hat{x}\in \hat{\gamma}}
  |g'_{t+s}(\hat{x})| \left|g_{t+s}(\hat{x}) - g_s(\hat{x}) \right| \\[.2cm]
\nonumber  \leq e^{3J_\theta} (\bar{\sigma} c + 2 m_*)^2
  F^{\theta_s}(\hat{\gamma}) \sum_{\hat{x}\in \hat{\gamma}} g_{t+s}(\hat{x})
  \int_s^{t+s} g_u(\hat{x}) d u \\[.2cm] \nonumber
  \leq t J_\theta e^{3J_\theta} (\bar{\sigma} c + 2 m_*)^2
  \Psi_0(\hat{\gamma}) \exp\left( - \bar{c} \Psi_0(\hat{\gamma})
  \right)\\[.2cm] \nonumber
  \leq t J_\theta e^{3J_\theta} (\bar{\sigma} c + 2 m_*)^2/\bar{c}
  \to 0 , \qquad t\to 0,
\end{gather}
which together with (\ref{JJ28}) yields
\begin{equation}
  \label{JJ30}
  \sup_{\hat{\gamma}\in \widehat{\Gamma}} |Q_t(\hat{\gamma}| \to 0,
  \qquad t\to 0.
\end{equation}
Now we turn to estimating $R_t$. By \ref{Q8}) we have
\begin{gather}
  \label{JJ31}
 \left| \theta_{t+s}'(x,\alpha) - \theta_{s}'(x, \alpha)\right| \leq |\theta'(x,
 \alpha+t+s)|
  \left|q_{t+s}(x, \alpha) - q_{s}(x, \alpha)\right| \\[.2cm] \nonumber + \left| \theta'(x, \alpha+t+s) - \theta'(x, \alpha+s)
  \right| + \left|m(x, \alpha+t+s) - m(x,\alpha+s) \right||\theta_{t+s}(x,
  \alpha)| \\[.2cm] \nonumber + \left|m(x, \alpha+s) - m(x,\alpha) \right||\theta_{s}(x,
  \alpha)|=: \delta_1 (t,\hat{x}) +  \delta_2
  (t,\hat{x}) +  \delta_3 (t,\hat{x}) + \delta_4 (t,\hat{x}).
\end{gather}
By (\ref{JJ12a}) we have
\begin{gather}
  \label{JJ32}
\delta_1 (t,\hat{x}) \leq e^{-g(x,\alpha+t+s)} |g'(x, \alpha +t+s)|
\int_{\alpha+s}^{\alpha+t+s} m(x, \beta ) d \beta \\[.2cm] \nonumber \leq t m_*
e^{J_\theta} (\bar{\sigma} c + 2 m_*) g(x,0) .
\end{gather}
To estimate $\delta_2$, by (\ref{JJ8}) we first get
\[
\theta'(\hat{x}) = - g' (\hat{x}) e^{- g(\hat{x})},
\]
by which we then obtain
\begin{eqnarray*}
\delta_2(t, \hat{x})& \leq & \left| g'(x, \alpha+t+s) -
g'(x,\alpha+s)
\right|e^{-g(x,\alpha+t+s)}  \\[.2cm] \nonumber & + & |g'(x,\alpha+s)|\left|e^{-g(x,\alpha+t+s)} -
e^{-g(x,\alpha+s)} \right| \\[.2cm] \nonumber & \leq & \left| g'(x, \alpha+t+s) - g'(x,\alpha+s)
\right|\\[.2cm] \nonumber & + & |g'(x,\alpha+s)|\left|g(x,\alpha+t+s) - g(x,\alpha+s) \right| \\[.2cm] \nonumber &
=: & \delta_{2,1}(t, \hat{x}) +\delta_{2,2}(t, \hat{x}).
\end{eqnarray*}
Now we recall that $g(x,\alpha)$ is as in (\ref{JJ}) with $w_{k,n}$
defined in (\ref{uM2}). Thus, we can write
\begin{gather}
  \label{JJ32b}
\delta_{2,1}(t, \hat{x}) \leq \sum_{j} v_{s_j}(x) \int_s^{t+s}
|w''_{k_j, n_j} (\alpha + u)| du.
\end{gather}
For each $k$ and $n$, we have
\begin{gather*}
|w''_{k,n}(\alpha)| = \left|- \sigma_k u_n'' (\alpha) e^{-\sigma_k
u_n(\alpha)} + [\sigma_k u_n' (\alpha)]^2 e^{-\sigma_k
u_n(\alpha)}\right| \\[.2cm] \nonumber \leq \bar{\sigma} |u_n''
(\alpha)| + \left[\bar{\sigma} u_n' (\alpha)\right]^2 \leq \bar{C},
\end{gather*}
holding for some $\bar{C}>0$ that is independent of $k,n$ and
$\alpha$. The latter conclusion follows by (\ref{uM2z}) and the fact
that $|u''_n(\alpha)| \leq 2\phi(n\alpha^3)$ with $\phi(\beta) =
(1+\beta^2)/(1+\beta)^3$, $\beta \geq 0$. Then by (\ref{JJ32b}) we
get
\begin{equation}
  \label{JJ32d}
\delta_{2,1}(t, \hat{x}) \leq t \bar{C} g(x,0).
\end{equation}
At the same time, by (\ref{Gt0}) and (\ref{Gt}) it  follows that
\begin{gather*}
\delta_{2,2}(t, \hat{x}) \leq |g'(x,\alpha+s)| \int_s^{t+s}
|g'(x,\alpha+u)| du \leq t(\bar{\sigma}c)^2 J_\theta g(x,0),
\end{gather*}
which together with (\ref{JJ32d}) yields
\begin{equation}
  \label{JJ32f}
  \delta_{2}(t, \hat{x}) \leq t \left[ \bar{C} + (\bar{\sigma}c)^2
  J_\theta \right] g(x,0).
\end{equation}
Finally, (\ref{vkpa}) and (\ref{JJ8}), (\ref{JJ13a}) we have
\begin{gather*}
\delta_3 (t,\hat{x}) \leq \varkappa (t) g_t(\hat{x}) \leq \varkappa
(t) g(x,0).
\end{gather*}
The same estimate holds true also for $\delta_4 (t,\hat{x})$. Then
by (\ref{JJ31}) and (\ref{JJ32}), (\ref{JJ32f}) we have that
\begin{equation*}
 \left| \theta_t'(x,\alpha) - \theta'(x, \alpha)\right| \leq
 \omega(t) g(x,0), \qquad \omega(t)\to 0, \ \   t\to 0.
\end{equation*}
holding for some continuous function $\omega$ and all $\hat{x}\in
\widehat{X}$. Now we use this in (\ref{JJ24}) and obtain, cf.
(\ref{JJ29})
\begin{gather}
  \label{JJ35}
  |R_t(\hat{\gamma})| \leq \omega(t) e^{J_\theta} F^\theta
  (\hat{\gamma}) \Psi_0(\hat{\gamma}) \leq \omega(t) e^{J_\theta}
  \Psi_0(\hat{\gamma}) e^{- \bar{c}\Psi_0(\hat{\gamma})} \leq \omega(t)
  e^{J_\theta}/ \bar{c},
\end{gather}
which together with (\ref{JJ30}) yields
\[
\sup_{\hat{\gamma}\in \widehat{\Gamma}} \left| (L_1 F^\theta_t)
(\hat{\gamma}) - (L_1 F^\theta) (\hat{\gamma}) \right| \to 0, \qquad
t\to 0.
\]
This completes the whole proof.
\end{proof}

\subsection{The domain}

We recall that $\Theta$ consists of the functions as in (\ref{JJ})
and the countable collection $\mathcal{F}_\Theta \subset C_{\rm
b}(\widehat{\Gamma})$ consists of the functions introduced in
(\ref{N10j}). It has a number of useful properties established in
Propositions \ref{N2jpn} and \ref{N2pn}. Let $\mathcal{C}_0$ be the
linear span of the set $\{F^{\theta_s}: s\in \mathds{Q}_+, \
\theta\in \Theta\}$, i.e., each $F\in \mathcal{C}_0$ is a finite
linear combination of $F^{\theta_s}$, with positive rational $s$ and
$\theta_s$ given in (\ref{Q8}) with all possible choices of
$\theta\in \Theta$.
\begin{remark}
  \label{i1rk}
The set $\mathcal{C}_0$ is countable. It enjoys all the properties
mentioned in Proposition \ref{N2pn}.
\end{remark}
Now we set
\begin{equation}
  \label{i0}
  \mathcal{C}= \overline{\mathcal{C}_0},
\end{equation}
i.e., $\mathcal{C}$ is the closure of $\mathcal{C}_0$ in the norm of
$C_{\rm b}(\widehat{\Gamma})$, which we denote $\|\cdot \|$. With
this norm it is then a separable Banach space.

For $\lambda
>0$ and $\theta\in \Theta$ and $s\geq 0$,  we define, cf. (\ref{Di7}),
\begin{eqnarray}
  \label{Do8}
 F_{\lambda,\theta_s}(\hat{\gamma}) & = & \int_0^{+\infty} e^{-\lambda t} F_t^{\theta_s} (\hat{\gamma}) d t \\[.2cm]
  \nonumber & = &\int_{0}^{+\infty}
  \exp\left[ -\lambda t + \int_0^t \left( \int_X \theta_s (x,\alpha) e^{-M(x,\alpha)}\chi(
  d x)
   \right) d \alpha \right] F^{\theta_{t+s}}(\hat{\gamma}) d t.
\end{eqnarray}
Since $\theta_s(x,\alpha)\leq 0$ and  $F_t^{\theta_s}$ satisfies
(\ref{Di7y}), the above integral converges for each $\hat{\gamma}$.
By the dominated convergence theorem and the boundedness
$F^{\theta_s}_t(\hat{\gamma}) \leq 1$ it follows that
$F_{\lambda,\theta_s}\in C_{\rm b}(\widehat{\Gamma})$. Moreover, it
can also be understood as the Bochner integral in the latter Banach
space. Therefore, $F_{\lambda,\theta}$ can be approximated in
$\|\cdot\|$ by the Riemann integral sums centered at rational $t$,
which means that
\begin{equation}
  \label{i}
  F_{\lambda,\theta_s} \in \mathcal{C}, \qquad {\rm for} \ {\rm all}
  \ s\geq 0 \ {\rm and} \
  \lambda >0.
\end{equation}
At the same time, we also have
\begin{equation}
  \label{Do8a}
0 < F_{\lambda,\theta_s}(\hat{\gamma}) < 1/\lambda, \qquad
\hat{\gamma}\in \widehat{\Gamma}.
\end{equation}
By the dominated convergence theorem one readily proves that the map
$(0,+\infty)\ni \alpha \mapsto F_{\lambda,\theta_s}
  (\hat{\gamma}\setminus \hat{x} \cup (x, \alpha))\in \mathds{R}$ is continuously
differentiable for each $\hat{\gamma}$ and $\hat{x}\in
\hat{\gamma}$.
\begin{lemma}
  \label{Di1lm}
For each $\lambda >0$, $s\geq 0$ and $\theta \in \Theta$, the
following holds
\begin{equation}
  \label{Di25}
  L F_{\lambda, \theta_s} = \lambda F_{\lambda, \theta_s} - F^{\theta_s}.
\end{equation}
\end{lemma}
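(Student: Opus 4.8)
The plan is to recognize $F_{\lambda,\theta_s}$ as the action of the resolvent $\int_0^{\infty} e^{-\lambda t}\,(\cdot)\,dt$ along the flow $t\mapsto F_t^{\theta_s}$, and to derive (\ref{Di25}) as the standard resolvent identity. First I would establish that $L$ may be taken under the $t$-integral, i.e.
\[
(L F_{\lambda,\theta_s})(\hat\gamma) = \int_0^{\infty} e^{-\lambda t}\,(L F_t^{\theta_s})(\hat\gamma)\,dt ,
\]
for each fixed $\hat\gamma$. Next, I would use that $\partial_t F_t^{\theta_s} = L F_t^{\theta_s}$: this follows from (\ref{Di9}) together with the flow property (\ref{DI}) and the relation (\ref{Di7u}), since the prefactor appearing there is independent of both $\hat\gamma$ and $t$, so that $L$ (being linear) commutes with multiplication by it. With this, the right-hand side above becomes $\int_0^{\infty} e^{-\lambda t}\,\partial_t F_t^{\theta_s}(\hat\gamma)\,dt$, and an integration by parts in $t$ gives
\[
\int_0^{\infty} e^{-\lambda t}\,\partial_t F_t^{\theta_s}(\hat\gamma)\,dt = \Big[ e^{-\lambda t} F_t^{\theta_s}(\hat\gamma) \Big]_0^{\infty} + \lambda \int_0^{\infty} e^{-\lambda t} F_t^{\theta_s}(\hat\gamma)\,dt .
\]
The boundary term at $t=\infty$ vanishes because $0< F_t^{\theta_s}\le 1$ by (\ref{Di7y}) while $e^{-\lambda t}\to 0$; the contribution at $t=0$ is $-F_0^{\theta_s}(\hat\gamma) = -F^{\theta_s}(\hat\gamma)$, since at $t=0$ the exponential prefactor in (\ref{Di7}) equals $1$ and $\theta_0=\theta_s$. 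Recognizing the remaining integral as $\lambda F_{\lambda,\theta_s}(\hat\gamma)$ yields (\ref{Di25}).

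The substantive part is the interchange of $L$ with the $t$-integral, which I would carry out separately for the three summands $L_1,L_2,L_3$ of (\ref{S1}). For $L_3$ (the $\chi$-integral over $X$) and $L_2$ (the sum over $\hat x\in\hat\gamma$) I would invoke Fubini's theorem, the needed joint integrability being supplied by the uniform-in-$t$ estimate (\ref{JJ13}) together with the finiteness of $\chi(g(\cdot,0))$, which holds because $g(\cdot,0)$ is bounded and compactly supported and $\chi$ is Radon, see (\ref{Est1}). For the aging term $L_1$ I must instead differentiate under the integral in the age variable: using the continuous $\alpha$-differentiability of $\alpha\mapsto F_{\lambda,\theta_s}(\hat\gamma\setminus\hat x\cup(x,\alpha))$ noted just before the statement, together with the uniform bound on $\partial_\alpha F_t^{\theta_s}$ coming from (\ref{JJ12a}), I would justify $\partial_{\alpha_x} F_{\lambda,\theta_s} = \int_0^{\infty} e^{-\lambda t}\,\partial_{\alpha_x} F_t^{\theta_s}\,dt$ and then interchange the (pointwise convergent, uniformly dominated) sum over $\hat x\in\hat\gamma$ with the integral.

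I expect this interchange to be the main obstacle, since $L$ is an unbounded operator that simultaneously sums over the possibly infinite particles of $\hat\gamma$, differentiates in age, and integrates over the habitat against $\chi$; the delicate point is producing, for each fixed $\hat\gamma$, a single $t$-integrable dominating bound valid for all three operations at once. The estimate (\ref{JJ13}) is tailored precisely for this: it is uniform in $t$, hence integrable against $e^{-\lambda t}\,dt$, and it is exactly this uniformity that legitimizes all three interchanges as well as the integration by parts, after which the identity follows from the elementary boundary evaluation above.
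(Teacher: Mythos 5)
Your proposal is correct and follows essentially the same route as the paper: commute $L$ with the Laplace-transform integral, replace $LF_t^{\theta_s}$ by $\partial_t F_t^{\theta_s}$ via (\ref{Di9}), and integrate by parts using the bounds (\ref{Di7y})--(\ref{Do8a}) to produce $\lambda F_{\lambda,\theta_s}-F^{\theta_s}$. The paper dispatches the interchange with a one-line appeal to dominated convergence, whereas you spell out the term-by-term justification for $L_1,L_2,L_3$ using the uniform-in-$t$ estimate (\ref{JJ13}); that is a faithful elaboration of the same argument, not a different one.
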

\begin{proof}
By (\ref{Di7}) and (\ref{Do8}) we have
\begin{gather}
\label{Di25a} L F_{\lambda, \theta_s} = L \int_0^{+\infty}
e^{-\lambda t} F^{\theta_s}_t  d t = \int_0^{+\infty}
e^{-\lambda t}  L F^\theta_t d t \\[.2cm] \nonumber =
\int_0^{+\infty} e^{-\lambda t} \frac{\partial}{\partial t}
F^{\theta_s}_t  d t = - F^{\theta_s}  + \lambda F_{\lambda,
\theta_s},
\end{gather}
where we have taken into account the upper bound in (\ref{Do8a}).
The commutation $L \int = \int L$ can be justified by means of the
Lebesgue dominated convergence theorem.
\end{proof}
\begin{lemma}
  \label{i1lm}
For each $\theta\in \Theta$ and $s\geq 0$, it follows that
$\|\lambda F_{\lambda,\theta_s}-F^{\theta_s}\|\to 0$ and $\|\lambda
L F_{\lambda,\theta_s}- L F^{\theta_s}\|\to 0$ as $\lambda \to
+\infty$.
\end{lemma}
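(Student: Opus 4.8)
The plan is to treat both convergences as instances of the standard approximate-identity argument for Laplace transforms, the map $F^{\theta_s}\mapsto \lambda F_{\lambda,\theta_s}$ being essentially $\lambda$ times the resolvent at $\lambda$ (recall $(\lambda-L)F_{\lambda,\theta_s}=F^{\theta_s}$ by Lemma \ref{Di1lm}). The starting point is the rescaling $u=\lambda t$ in (\ref{Do8}), which turns the measure $\lambda e^{-\lambda t}\,dt$ into the probability measure $e^{-u}\,du$ on $(0,+\infty)$ and gives
\[
\lambda F_{\lambda,\theta_s}(\hat{\gamma}) = \int_0^{+\infty} e^{-u} F^{\theta_s}_{u/\lambda}(\hat{\gamma})\, du .
\]
Since $\int_0^{+\infty} e^{-u}\,du =1$, I may write $F^{\theta_s}(\hat{\gamma})=\int_0^{+\infty}e^{-u}F^{\theta_s}(\hat{\gamma})\,du$ and subtract, obtaining $\lambda F_{\lambda,\theta_s}-F^{\theta_s}=\int_0^{+\infty}e^{-u}\,[F^{\theta_s}_{u/\lambda}-F^{\theta_s}]\,du$. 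As $\lambda\to+\infty$, the point $u/\lambda\to 0$ for each fixed $u$, so the whole matter reduces to the small-$t$ behaviour of $F^{\theta_s}_t$ and $LF^{\theta_s}_t$ established in Propositions \ref{Ju2pn} and \ref{Ju3pn}.

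For the first convergence I would simply insert the explicit rate (\ref{Yie}), namely $\|F^{\theta_s}_t-F^{\theta_s}\|\le t\,C_{s,\theta}$ with $C_{s,\theta}:=\ell_\theta\exp\!\left(-\int_0^s\int_X\theta_u(x,0)\,\chi(dx)\,du\right)$. This yields
\[
\|\lambda F_{\lambda,\theta_s}-F^{\theta_s}\|\le \int_0^{+\infty} e^{-u}\,\frac{u}{\lambda}\,C_{s,\theta}\,du=\frac{C_{s,\theta}}{\lambda},
\]
because $\int_0^{+\infty}u\,e^{-u}\,du=1$, and the right-hand side tends to $0$. Thus no interchange of limit and integral is even needed here; the explicit linear-in-$t$ bound does everything.

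The second convergence is the delicate one, and it is where I expect the main obstacle. Proposition \ref{Ju3pn} provides only the qualitative statement $\|LF^{\theta_s}_t-LF^{\theta_s}\|\to 0$ as $t\to 0$, with no rate, so I cannot repeat the bare computation above and must instead justify passing to the limit under the integral. Using the commutation $L\!\int=\!\int L$ already carried out in (\ref{Di25a}), I would write $LF_{\lambda,\theta_s}=\int_0^{+\infty}e^{-\lambda t}LF^{\theta_s}_t\,dt$ and rescale as before to get
\[
\lambda LF_{\lambda,\theta_s}-LF^{\theta_s}=\int_0^{+\infty}e^{-u}\,[LF^{\theta_s}_{u/\lambda}-LF^{\theta_s}]\,du .
\]
For each fixed $u>0$ the integrand tends to $0$ in $\|\cdot\|$ by Proposition \ref{Ju3pn}. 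To dominate it, I would note that the flow relation (\ref{Di7u}) gives $F^{\theta_s}_t=\exp\!\left(-\int_0^s\int_X\theta_u(x,0)\,\chi(dx)\,du\right)F^{\theta}_{t+s}$, whence $LF^{\theta_s}_t=\exp\!\left(-\int_0^s\int_X\theta_u(x,0)\,\chi(dx)\,du\right)LF^{\theta}_{t+s}$; combined with the bound (\ref{JJ13}), which is uniform in its time argument, this gives $\|LF^{\theta_s}_t\|\le C_{s,\theta}$ for every $t\ge 0$. Hence the integrand is bounded by $2C_{s,\theta}e^{-u}\in L^{1}(0,+\infty)$ uniformly in $\lambda$, and the dominated convergence theorem applied to the scalar integral $\int_0^{+\infty}e^{-u}\|LF^{\theta_s}_{u/\lambda}-LF^{\theta_s}\|\,du$ yields the claim. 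The one point to be careful about is that the domination must be extracted from the uniform-in-$t$ estimate (\ref{JJ13}) rather than from any decay of $LF^{\theta_s}_t-LF^{\theta_s}$, and this is exactly what the flow identity (\ref{Di7u}) supplies.
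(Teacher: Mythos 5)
Your proof is correct and follows essentially the same route as the paper: the substitution $u=\lambda t$ turns $\lambda F_{\lambda,\theta_s}$ into an average of $F^{\theta_s}_{u/\lambda}$ against the probability measure $e^{-u}\,du$, the first limit then follows from the linear-in-$t$ bound (\ref{Yie}), and the second from Proposition \ref{Ju3pn}. The only (harmless) difference is that for the second limit the paper invokes an explicit modulus $\tilde{\omega}(1/\lambda)$ extracted from the estimates behind Proposition \ref{Ju3pn}, whereas you replace this by dominated convergence using the $t$-uniform bound (\ref{JJ13}) transported by (\ref{Di7u}); both are valid.
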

\begin{proof}
In view of (\ref{Do8a}), $\{\lambda F_{\lambda, \theta_s}:\lambda
>0\}$ is bounded. By (\ref{Do8}) we have
\begin{gather*}
\lambda F_{\lambda , \theta_s} (\hat{\gamma}) =
\int_0^{+\infty}\exp\left( - t + \int_0^{\varepsilon t}\left( \int_X
\theta_{\alpha+s} (x,0) \chi(d x)\right) d \alpha\right)
F^{\theta_{\varepsilon t+s}}(\hat{\gamma}) dt, \quad \varepsilon
:=\lambda^{-1}.
\end{gather*}
Then by (\ref{Yie}) it follows that
\begin{gather*}
  \left| \lambda F_{\lambda , \theta_s} (\hat{\gamma}) -
  F^{\theta_s}
  (\hat{\gamma})\right| \leq \int_0^{+\infty}e^{-t}\left|  F_{\varepsilon t}^ {\theta_s} (\hat{\gamma}) -
  F^{\theta_s}
  (\hat{\gamma})\right| dt \leq \ell_\theta/\lambda,
\end{gather*}
which yields that $\| \lambda F_{\lambda , \theta_s} -
F^{\theta_s}\|\to 0$ as $\lambda \to+\infty$.  In the same way, by
Proposition \ref{Ju3pn} we have, cf. (\ref{JJ35}),  that
\begin{equation*}
\| \lambda L F_{\lambda , \theta_s}  - L F^{\theta_s}
  \| \leq \tilde{\omega}(1/\lambda) \to 0, \quad
  \lambda \to +\infty,
\end{equation*}
holding for an appropriate continuous $\tilde{\omega}$ such that
$\tilde{\omega}(\epsilon) \to 0$ as $\epsilon \to 0$.
\end{proof}
Let $\mathcal{D}_0(L)$ denote the linear span of the set
$\{F_{\lambda, \theta_s}: \lambda >0, s\geq 0, \theta \in
\varTheta\}$. By Lemma \ref{i1lm} and (\ref{i0}) it follows  that
\begin{equation}
  \label{i6}
  \mathcal{C}_0 \subset \overline{\mathcal{D}_0(L)},
\end{equation}
i.e., $\mathcal{C}_0$ is contained in the closure of
$\mathcal{D}_0(L)$ in the norm of $C_{\rm b}(\widehat{\Gamma})$.
Hence, $\mathcal{D}_0(L)$ is a dense subset of the Banach space
$\mathcal{C}$, see (\ref{i0}). Define
\begin{equation}
  \label{i2}
  \|F\|_L = \|F\| + \|LF\|, \qquad F\in \mathcal{D}_0(L),
\end{equation}
where -- as above -- $\|\cdot\|$ is the norm of $C_{\rm
b}(\widehat{\Gamma})$.
\begin{definition}
  \label{N4df}
By the domain of the Kolmogorov operator $L$, denoted by
$\mathcal{D}(L)$, we mean the closure of $\mathcal{D}_0(L)$ in the
graph-norm introduced in (\ref{i2}).
\end{definition}
\begin{lemma}
  \label{Bo1pn}
The operator $(L, \mathcal{D}(L))$ is closed and densely defined in
$\mathcal{C}$. Its resolvent set contains $(0,+\infty)$ and
$\mathcal{C}_0 \subset \mathcal{D}(L)$.
\end{lemma}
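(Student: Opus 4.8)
The plan is to deduce all four assertions from one structural property of $L$: dissipativity with respect to the supremum norm $\|\cdot\|$ of $C_{\rm b}(\widehat{\Gamma})$. Note first that, since $L$ is the concrete operator (\ref{S1}) acting on functions, it is automatically well defined and linear on the linear span $\mathcal{D}_0(L)$ of the genuine bounded continuous functions $F_{\lambda,\theta_s}$, so no consistency question among linear relations arises. Moreover, by Lemma \ref{Di1lm} we have $LF_{\lambda,\theta_s}=\lambda F_{\lambda,\theta_s}-F^{\theta_s}$, whose right-hand side lies in $\mathcal{C}$ by (\ref{i}) and $F^{\theta_s}\in\mathcal{C}_0$; hence $L:\mathcal{D}_0(L)\to\mathcal{C}$. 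It therefore remains only to control the norm of $\lambda F-LF$.

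First I would establish the positive maximum principle: for real $F\in\mathcal{D}_0(L)$ with $\sup_{\hat{\gamma}}F(\hat{\gamma})=\|F\|\geq 0$, any maximiser $\hat{\gamma}_0$ satisfies $(LF)(\hat{\gamma}_0)\leq 0$. Term by term this is transparent from (\ref{S1}): at a maximiser, removing a particle cannot increase $F$, so the departure part $\sum m(\hat{x})[F(\hat{\gamma}_0\setminus\hat{x})-F(\hat{\gamma}_0)]\leq 0$ because $m\geq 0$; adjoining a particle of age $0$ cannot increase $F$, so the arrival part $\int_X[F(\hat{\gamma}_0\cup(x,0))-F(\hat{\gamma}_0)]\chi(dx)\leq 0$ because $\chi\geq 0$; and increasing the age of any single particle cannot increase $F$, so each right derivative $\frac{\partial}{\partial\alpha}F(\hat{\gamma}_0)\leq 0$ and the aging part $\sum_{(x,\alpha)\in\hat{\gamma}_0}\frac{\partial}{\partial\alpha}F(\hat{\gamma}_0)\leq 0$. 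The delicate point is that $\widehat{\Gamma}$ is not compact, so a maximiser need not exist a priori. Here I would use that every $F\in\mathcal{D}_0(L)$ is built from the functions $F^{\theta_s}$, whose underlying $g$ has compact spatial support $\Lambda$, together with the lower bound $\bar c\,g(x,0)\leq g(x,\alpha)$ from (\ref{JJ13a}), which forces $F^{\theta_s}\to 0$ as the number of particles in $\Lambda$ grows; combined with the compactness of the age space $(\mathds{R}_{+},r)$ from Proposition \ref{Qh1pn}, this confines the maximisation to configurations with a bounded number of particles supported in $\Lambda$, a compact subset of $\widehat{\Gamma}$, so the supremum (and, applied to $-F$, the infimum) is attained. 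Granting this and applying the principle to whichever of $\pm F$ realises $\|F\|$ gives, at the corresponding extremiser, $(\lambda F-LF)(\hat{\gamma}_0)\geq\lambda\|F\|$, hence the dissipativity estimate $\|\lambda F-LF\|\geq\lambda\|F\|$ for all $\lambda>0$ and $F\in\mathcal{D}_0(L)$. I expect this attainment/sign analysis for the aging term to be the main obstacle.

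Dissipativity then yields closability of $(L,\mathcal{D}_0(L))$. Indeed, if $F_n\in\mathcal{D}_0(L)$, $\|F_n\|\to 0$ and $\|LF_n-H\|\to 0$, then applying $\|(\alpha-L)(F_n+\alpha^{-1}G)\|\geq\|\alpha F_n+G\|$ for $G\in\mathcal{D}_0(L)$, letting $n\to\infty$ to obtain $\|G-H-\alpha^{-1}LG\|\geq\|G\|$, then $\alpha\to+\infty$ to obtain $\|G-H\|\geq\|G\|$, and finally using the $\|\cdot\|$-density of $\mathcal{D}_0(L)$ from (\ref{i6}) to extend this to all $G\in\mathcal{C}$, the choice $G=H$ forces $H=0$. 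Consequently the graph-norm closure of Definition \ref{N4df} defines a genuine closed operator $(L,\mathcal{D}(L))$, and passing to graph-norm limits preserves the bound $\|\lambda F-LF\|\geq\lambda\|F\|$ on all of $\mathcal{D}(L)$. The operator is densely defined because $\mathcal{D}_0(L)\subset\mathcal{D}(L)$ is already $\|\cdot\|$-dense by (\ref{i6}). The inclusion $\mathcal{C}_0\subset\mathcal{D}(L)$ follows from Lemma \ref{i1lm}: each $F^{\theta_s}$ is the graph-norm limit, as $\lambda\to+\infty$, of $\lambda F_{\lambda,\theta_s}\in\mathcal{D}_0(L)$, and linear combinations exhaust $\mathcal{C}_0$.

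Finally I would show that $(0,+\infty)$ lies in the resolvent set. Fix $\lambda>0$. The lower bound $\|\lambda F-LF\|\geq\lambda\|F\|$ makes $\lambda-L$ injective with $\|(\lambda-L)^{-1}\|\leq\lambda^{-1}$ on its range, and, combined with the closedness of $L$, it makes that range closed: if $(\lambda-L)F_n\to g$, the bound shows $\{F_n\}$ is Cauchy, $F_n\to F$, whence $LF_n=\lambda F_n-(\lambda-L)F_n\to\lambda F-g$ and closedness gives $(\lambda-L)F=g$. By Lemma \ref{Di1lm} the range contains every $F^{\theta_s}$, hence by linearity all of $\mathcal{C}_0$, which is $\|\cdot\|$-dense in $\mathcal{C}$ by (\ref{i0}). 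A closed subspace containing a dense set is the whole space, so $\lambda-L$ is onto and therefore boundedly invertible, i.e.\ $\lambda$ belongs to the resolvent set of $L$. This establishes all the assertions of the lemma.
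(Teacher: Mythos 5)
Your overall architecture is sound and genuinely different from the paper's. The paper never mentions dissipativity or a maximum principle: it reads everything off the explicit resolvent identity of Lemma \ref{Di1lm} and the pointwise bound (\ref{Do8a}) ($R_\lambda(L)F^{\theta_s}=F_{\lambda,\theta_s}$, $0<F_{\lambda,\theta_s}<1/\lambda$), takes closedness as immediate from the graph-norm-closure construction of Definition \ref{N4df}, and gets $\mathcal{C}_0\subset\mathcal{D}(L)$ from Lemma \ref{i1lm}, exactly as you do in your last two paragraphs. Your closability argument and your closed-range/dense-range proof of surjectivity of $\lambda-L$ are correct and actually more explicit than the paper's. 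The problem is that everything hinges on the dissipativity estimate, and your proof of it does not go through.

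The gap is the attainment of the supremum, which you flag as "the main obstacle" but then assert via a compactness reduction that is false. Take $\theta_1,\theta_2\in\Theta$ whose spatial supports are disjoint compacts $\Lambda_1,\Lambda_2$, and set $F=F_{\lambda,\theta_1}-F_{\lambda,\theta_2}\in\mathcal{D}_0(L)$. The two summands depend on disjoint groups of particles, so $\sup F=\sup F_{\lambda,\theta_1}-\inf F_{\lambda,\theta_2}=F_{\lambda,\theta_1}(\varnothing)-0$; but $\inf F_{\lambda,\theta_2}=0$ is approached only along configurations whose particle number in $\Lambda_2$ tends to infinity (by (\ref{JJ13b}) and dominated convergence, $F_{\lambda,\theta_2}(\hat{\gamma}_n)\to 0$ when $n$ particles sit where $g_2(\cdot,0)>0$) and is never attained since $F_{\lambda,\theta_2}>0$ everywhere by (\ref{Do8a}). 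The same applies to $-F$, so neither $\sup F$ nor $\sup(-F)$ is attained, the extremising sequences leave every set of configurations with boundedly many particles, and your "whichever of $\pm F$ realises $\|F\|$" dodge has nothing to apply to. Passing to approximate maximisers does not immediately repair this: at a point where $F$ is within $\epsilon$ of its supremum you only control each individual difference $F(\hat{\gamma}\setminus\hat{x})-F(\hat{\gamma})\le\epsilon$, while the departure and aging parts of (\ref{S1}) sum over \emph{all} particles of $\hat{\gamma}$, so no bound $(LF)(\hat{\gamma})\le\epsilon'$ follows without extra uniform control. Dissipativity is in fact true, but the clean route to it is not a maximum principle: it is the observation that $F^{\theta}\mapsto F^{\theta}_t$ is implemented by the Markov transition kernel (\ref{Q11}) and hence extends to a positive contraction on the span of the $F^{\theta_s}$, giving $\|\lambda F_{\lambda,\theta_s}\|\le\|F^{\theta_s}\|$ and its analogue for linear combinations; with that substitution the rest of your argument stands.
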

\begin{proof}
In view of (\ref{i}) and (\ref{i6}), the $\|\cdot\|$-closure of
$\mathcal{D}(L)$ is $\mathcal{C}$. The closedness of $(L,
\mathcal{D}(L))$ is immediate, and the inclusion $\mathcal{C}_0
\subset \mathcal{D}(L)$ follows by the second part of Lemma
\ref{i1lm}.
 By (\ref{Di25}) it follows that the
resolvent of $L$, denoted $R_\lambda(L)$, has the property
\begin{equation*}
  R_\lambda (L) F^{\theta_s} = F_{\lambda,\theta_s}, \qquad \lambda >0, \
  \quad s\geq 0, \quad \theta \in \Theta,
\end{equation*}
by which and (\ref{Do8a}) we also have that the operator norm of
$R_\lambda(L)$ satisfies $\|R_\lambda (L) \| \leq 1/\lambda$ as
$F^{\theta_s}$ form a dense subset of $\mathcal{C}$. This completes
the whole proof.
\end{proof}
\subsection{Solving the Kolmogorov equation}

The result obtained in Lemma \ref{Bo1pn} allows one to solve the
Kolmogorov equation (\ref{KE}) in the following sense.
\begin{theorem}
  \label{i1tm}
Let $(L, \mathcal{D}(L))$ and $\mathcal{C}$ be as in Lemma
\ref{Bo1pn}. Then, for each $F\in \mathcal{D}(L)$, there exists a
unique continuously differentiable map $[0,+\infty)\ni t \mapsto F_t
\in \mathcal{D}(L) \subset \mathcal{C}$, which solves (\ref{KE})
with $F_0 = F$. In particular, for $F=F^{\theta_s}$, with
$F^{\theta_s}$ as in (\ref{Q8}), $s\geq 0$ and $\theta\in \Theta$,
the solution has the following explicit form, cf. (\ref{Di7})
\begin{equation}
  \label{i7}
  F_t (\hat{\gamma}) = \exp\left[\int_0^t
  \left(\int_X \theta_s (x,\alpha) \chi(d x) \right) d\alpha \right]
  F^{\theta_{t+s}}(\hat{\gamma}).
\end{equation}
\end{theorem}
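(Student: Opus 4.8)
The plan is to deduce the result from the Hille--Yosida theorem. Lemma \ref{Bo1pn} establishes exactly the hypotheses that are needed: the operator $(L, \mathcal{D}(L))$ is closed and densely defined in $\mathcal{C}$, its resolvent set contains $(0, +\infty)$, and $\|R_\lambda(L)\| \leq 1/\lambda$ for every $\lambda > 0$. These are precisely the conditions of the Hille--Yosida theorem for contraction semigroups, so $L$ generates a $C_0$-semigroup $\{S(t)\}_{t \geq 0}$ on $\mathcal{C}$ with $\|S(t)\| \leq 1$.

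First I would obtain existence and uniqueness from the standard semigroup theory. For $F \in \mathcal{D}(L)$, set $F_t := S(t) F$. Then $t \mapsto F_t$ is a continuously differentiable map into $\mathcal{C}$, it takes values in $\mathcal{D}(L)$, and it satisfies $\frac{d}{dt} F_t = L S(t) F = S(t) L F$ together with $F_0 = F$; this is the required solution of (\ref{KE}). For uniqueness, if $G_t$ is another continuously differentiable $\mathcal{D}(L)$-valued solution with $G_0 = F$, then the usual argument shows that $s \mapsto S(t-s) G_s$ has vanishing derivative on $[0,t]$ (since $\frac{d}{ds} S(t-s) G_s = - S(t-s) L G_s + S(t-s) L G_s = 0$), hence is constant, whence $G_t = S(t) F = F_t$.

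It remains to identify $S(t) F^{\theta_s}$ with the explicit expression (\ref{i7}), that is, with $F_t^{\theta_s}$ defined through (\ref{Di7}). The key point is that the resolvent has already been computed explicitly: by Lemmas \ref{Di1lm} and \ref{Bo1pn} one has $R_\lambda(L) F^{\theta_s} = F_{\lambda, \theta_s}$, and (\ref{Do8}) exhibits $F_{\lambda, \theta_s}$ as the vector-valued Laplace transform $\int_0^{+\infty} e^{-\lambda t} F_t^{\theta_s}\, dt$. On the other hand, semigroup theory gives $R_\lambda(L) F^{\theta_s} = \int_0^{+\infty} e^{-\lambda t} S(t) F^{\theta_s}\, dt$. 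Thus the two $\mathcal{C}$-valued maps $t \mapsto F_t^{\theta_s}$ and $t \mapsto S(t) F^{\theta_s}$ share the same Laplace transform for every $\lambda > 0$. Both are norm-continuous---the former by Proposition \ref{Ju2pn} together with the flow identity (\ref{Di7u}), the latter by strong continuity of the semigroup---so by uniqueness of the vector-valued Laplace transform they coincide for all $t \geq 0$, which yields (\ref{i7}).

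I expect the only delicate point to be the passage from pointwise to norm-level statements in this last step: although (\ref{Di9}) already verifies the equation pointwise in $\hat{\gamma}$, the identification through the resolvent requires $t \mapsto F_t^{\theta_s}$ to be continuous in the norm of $\mathcal{C}$, which is exactly what Propositions \ref{Ju2pn} and \ref{Ju3pn} were designed to supply. Alternatively, these two propositions show directly that $t \mapsto F_t^{\theta_s}$ is a norm-continuously differentiable classical solution with $\frac{d}{dt} F_t^{\theta_s} = L F_t^{\theta_s}$ and $F_0^{\theta_s} = F^{\theta_s} \in \mathcal{D}(L)$, so the uniqueness part proved above already forces $F_t^{\theta_s} = S(t) F^{\theta_s}$ without appeal to Laplace transforms.
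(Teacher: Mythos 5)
Your proposal is correct and follows essentially the same route as the paper: invoke Hille--Yosida via Lemma \ref{Bo1pn} to obtain the contraction semigroup $S(t)$, take $F_t = S(t)F$ with uniqueness from standard semigroup theory, and identify $S(t)F^{\theta_s}$ with the explicit expression by checking that $t\mapsto F^{\theta_s}_t$ is itself a classical solution (the paper cites the computation (\ref{Di9}), with Propositions \ref{Ju2pn} and \ref{Ju3pn} supplying the norm-level continuity exactly as you note). Your Laplace-transform identification via $R_\lambda(L)F^{\theta_s}=F_{\lambda,\theta_s}$ is a harmless variant of the same argument.
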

\begin{proof}
 By the celebrated Hille-Yosida theorem, see, e.g, \cite[page 8]{Pazy}, $(L, \mathcal{D}(L))$ is the
 generator of a $C_0$-semigroup of bounded linear operators $S(t):
 \mathcal{C}\to \mathcal{C}$ such that $\|S(t)\|= 1$ and the
 solution  in question is $F_t = S(t) F$, the uniqueness of which is
 also a standard fact, see \cite[Theorem 1.3, page 102]{Pazy}. The
 validity of (\ref{i7}) follows by the calculations as in
 (\ref{Di9}).
\end{proof}

\section{The Result}
\subsection{Poisson measures and transition functions}

We begin by recalling, see Proposition \ref{N2pn}, that the class of
functions $\mathcal{F}_\Theta$, see (\ref{N10j}), is separating,
i.e., if $\mu_1(F) = \mu_2(F)$ for all $F\in \mathcal{F}_\Theta$,
then $\mu_1=\mu_2$, that holds for each pair $\mu_1,\mu_2 \in
\mathcal{P}(\widehat{\Gamma})$.  Next, for $t\geq 0$, $\mu\in
\mathcal{P}(\widehat{\Gamma})$ and $\theta\in \Theta$, we define
$\mu^t\in \mathcal{P}({\widehat{\Gamma}})$ by the relation
\begin{equation}
  \label{Q7}
  \mu^t (F^\theta) = \mu(F^{\theta_t}).
\end{equation}
Recall that, for a positive Radon measure $\varrho$ on
$\widehat{X}$, the Poisson measure $\pi_\varrho$ with intensity
measure $\varrho$ is defined in (\ref{N7j}). For $t\geq 0$, we then
introduce the Poisson measure $\pi_t=\pi_{\varrho_t}$ by defining
its intensity measure
\begin{equation}
  \label{Q9}
  \varrho_t (d \hat{x}) = \mathds{1}_{[0,t)} (\alpha) \exp\left(
  - M(\hat{x})\right) \chi( d x) d \alpha,
\end{equation}
where $\chi$ is the same as in (\ref{S1}), $d \alpha$ is the
Lebesgue measure on $\mathds{R}_{+}$ and $M$ is as in (\ref{Q8}).
Note that $\pi_0 (\{\varnothing\}) =1$ since $\pi_0 (F^\theta)=1$
for all $\theta$. Then we set, see (\ref{Conv}),
\begin{equation}
  \label{Q10}
  \mu_t = \pi_t \star \mu^t, \qquad t\geq 0.
\end{equation}
Let $\delta_{\hat{\gamma}}$ be the Dirac measure centered at a given
$\hat{\gamma}\in \widehat{\Gamma}$. Then
\begin{equation}
  \label{Q11}
  p^{\hat{\gamma}}_t  := \pi_t \star \delta^t_{\hat{\gamma}}
\end{equation}
is a transition function, cf. \cite[page 156]{EK}. Indeed,
$p^{\hat{\gamma}}_t\in \mathcal{P}(\widehat{\Gamma})$,
$p^{\hat{\gamma}}_0 = \delta_{\hat{\gamma}}$ and the measurability
of the map $(t,\hat{\gamma}) \mapsto p^{\hat{\gamma}}_t
(B)\in\mathds{R}$, $B \in \mathcal{B}(\widehat{\Gamma})$, follows by
the measurability of $(t,\hat{\gamma}) \mapsto
\delta_{\hat{\gamma}}^t (B)\in\mathds{R}$ and the continuity of $t
\mapsto \pi_t(B)\in\mathds{R}$. In view of the separating property
of $\mathcal{F}_{\varTheta}$, see Proposition \ref{N2pn}, item
(iii), the flow property of $\{p^{\hat{\gamma}}_t\}_{t\geq 0}$ can
be obtained by showing that
\begin{equation}
  \label{Q12}
  p^{\hat{\gamma}}_{t+s} (F^\theta) = \int_{\widehat{\Gamma}_*}  p^{\hat{\gamma}'}_t
  (F^\theta)  p^{\hat{\gamma}}_s ( d \hat{\gamma}'), \qquad t,
  s\geq 0, \
   \ \theta \in \varTheta.
\end{equation}
By (\ref{Conv}) we have
\begin{gather*}
p_t^{\hat{\gamma}'}(F^\theta) = \exp\left( \int_X \int_0^t
\theta_\alpha (x,0) d \alpha \chi(dx)\right)
\delta^t_{\hat{\gamma}'} (F^\theta) \\[.2cm] = \exp\left( \int_X
\int_0^t \theta_\alpha (x,0) d \alpha \chi(dx)\right) F^{\theta_t}
(\hat{\gamma}'),
\end{gather*}
see also (\ref{Q7}) and (\ref{Q8}). By the latter formula and
 (\ref{Q9}), (\ref{Q11}) we then get
\begin{eqnarray*}
{\rm RHS(\ref{Q12})}& = &\exp\left( \int_X \int_0^t \theta_\alpha
(x,0) d \alpha \chi(dx) +  \int_X \int_0^s \theta_{t+\alpha }(x,0) d
\alpha \chi(dx) \right) \delta^s_{\hat{\gamma}} (F^{\theta_t})\qquad \\[.2cm]
& = & \exp\left( \int_X \int_0^{t+s} \theta_\alpha (x,0) d \alpha
\chi(dx) \right) F^{\theta_{t+s}}(\hat{\gamma})  =  {\rm
LHS(\ref{Q12})}.
\end{eqnarray*}
As is known, cf. \cite[Theorem 1.2.page 157]{EK}, the transition
function (\ref{Q11}) determines a Markov process, $\mathcal{X}$,
with values in $\widehat{\Gamma}$, the finite-dimensional
distributions of which are given by the following formula
\begin{gather}
\label{G1} P(\mathcal{X}(s_1)\in B_1, \dots , \mathcal{X}(s_n)\in
B_n)\\[.2cm] \nonumber = \int_{\widehat{\Gamma}} \int_{B_1} \cdots \int_{B_{n-1}}
p_{s_n - s_{n-1}}^{\hat{\gamma}_{n-1}}(B_n)
p_{s_{n-1}-s_{n-2}}^{\hat{\gamma}_{n-2}}(d \hat{\gamma}_{n-1})
\\[.2cm]  \nonumber
\times \cdots \times p_{s_{2}-s_{1}}^{\hat{\gamma}_{1}}(d
\hat{\gamma}_{2}) p_{s_{1}}^{\hat{\gamma}}(d \hat{\gamma}_{1}) \mu
(d\hat{\gamma}),
\end{gather}
holding for all $n\in \mathds{N}$,  $0< s_1 \leq s_2 \leq \cdots
\leq s_n$ and $B_i \in \mathcal{B}(\widehat{\Gamma})$. Here $\mu\in
\mathcal{P}(\widehat{\Gamma})$ is the initial distribution of
$\mathcal{X}$. Our aim is to show that such a process is unique up
to modifications.

\subsection{The statement}

The process determined by (\ref{G1}) describes the stochastic
evolution of the population which we consider. To verify whether it
is the only one, we have to specify which processes of this kind can
be associated to the model defined by the Kolmogorov operator
(\ref{S1}). As is standard, the corresponding specification is made
by their martingale property, see \cite[Chgapter 4]{EK}.
\begin{definition}
  \label{Jundf}
Let $\mathcal{X}$ be a measurable process on some probability space
$(\Omega, \mathfrak{F}, P)$ with values in $\widehat{\Gamma}$.  Let
also $\{\mathfrak{F}_t\}_{t\geq 0}$ be a filtration such that
$\mathcal{X}(t)$ and
\[
\int_0^t G(\mathcal{X}(u)) du
\]
are $\mathfrak{F}_t$-measurable for all $t$ and $G\in
B(\widehat{\Gamma})$. We say that $\mathcal{X}$ is a solution of the
martingale problem for $(L, \mathcal{D}(L))$ if for each $F\in
\mathcal{D}(L)$,
\[
\mathcal{M}(t) := F(\mathcal{X}(t)) - \int_0^t (L F)(\mathcal{X}(u))
d u
\]
is a $\mathfrak{F}_t$-martingale. If there exists a solution of the
martingale problem for $(L, \mathcal{D}(L))$ and uniqueness holds,
we say that the problem is well-posed. In the same way, we define
the martingale problem for $(L, \mathcal{D}(L), \mu)$ if the initial
distribution $\mu\in \mathcal{P}(\widehat{\Gamma})$ is specified.
\end{definition}
The process related to the transition function (\ref{Q11}) solves
the martingale problem for $(L, \mathcal{D}(L))$. Its uniqueness
will be shown by proving that all other solutions have the same
finite-dimensional marginals, i.e., they coincide with those defined
in (\ref{G1}). We are going also to show that the solution is
temporarily ergodic.
\begin{definition}
  \label{T1df}
 Let the martingale problem for $(L,\mathcal{D}(L),\mu)$  be
 well-posed. Then $\mu\in \mathcal{P}(\widehat{\Gamma})$ is said to be a
 stationary distribution if for each $n$ and  $0<s_1 < s_2 < \cdots <s_n$, the $n$-dimensional
 marginals introduced in (\ref{G1}) corresponding to $t+s_1,
 \dots , t+ s_n$ are
 independent of $t\geq 0$.
\end{definition}
If $P$ is as in (\ref{G1}), then $\mu$ is stationary if and only if
\begin{equation*}
  \mu = \int_{\widehat{\Gamma}} p_{s}^{\hat{\gamma}} \mu
(d\hat{\gamma}),
\end{equation*}
holding for all $t>0$. Now we can formulate our result.
\begin{theorem}
  \label{1tm}
The martingale problem for $(L,\mathcal{D}(L))$ is well-posed in the
sense of Definition \ref{Jundf}. Its solution is defined by
finite-dimensional marginals, see (\ref{G1}), with the transition
function defined in (\ref{Q11}). If - in addition to Assumption
\ref{Jass} - the departure function satisfies $m(\hat{x}) \geq m_0
>0$, holding for all $\hat{x}$ and some $m_0$, then there exists a unique stationary distribution
$\mu=\pi_\varrho$, which is the Poisson measure with intensity
measure
\begin{equation}
  \label{Pi}
\varrho(d\hat{x}) = \exp\left( - M(\hat{x})\right) \chi( d x) d
\alpha.
\end{equation}
Moreover, in this case the solution $\mathcal{X}$ of the martingale
problem for $(L, \mathcal{D}(L), \mu)$ is temporarily ergodic in the
following sense. Let  $\mu_t \in \mathcal{P} (\widehat{\Gamma})$ be
the law of $\mathcal{X}(t)$, $t\geq0$. Then $\mu_t \Rightarrow
\pi_\varrho$ as $t\to +\infty$.
\end{theorem}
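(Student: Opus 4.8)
The plan is to split the statement into four tasks: existence of a solution of the martingale problem with finite-dimensional laws (\ref{G1}); uniqueness; construction and uniqueness of the stationary law $\pi_\varrho$; and the convergence $\mu_t\Rightarrow\pi_\varrho$. The backbone of everything is the single identity
\begin{equation*}
p^{\hat\gamma}_t(F^\theta) = \exp\big(\varrho_t(\theta)\big)\,F^{\theta_t}(\hat\gamma) = F^\theta_t(\hat\gamma),
\end{equation*}
which I would obtain by combining (\ref{Q11}), the Poisson formula (\ref{N7j}), the convolution rule (\ref{N11j}) and the definition (\ref{Q7}) with (\ref{Di7}) and (\ref{Q9}). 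This says that the transition operators $T(t)F(\hat\gamma):=\int F\,dp^{\hat\gamma}_t$ agree, on the generating family $\mathcal{F}_\Theta$, with the $C_0$-semigroup $S(t)$ of Theorem \ref{i1tm}; since both are bounded and $\mathcal{C}_0$ is dense in $\mathcal{C}$, we get $T(t)=S(t)$ on $\mathcal{C}$. The flow property (\ref{Q12}) being already established, the measures (\ref{G1}) are the finite-dimensional laws of a Markov process whose transition semigroup has generator $(L,\mathcal{D}(L))$; existence of a martingale-problem solution then follows from the standard semigroup/martingale correspondence, e.g.\ \cite[Chapter 4]{EK}.

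For uniqueness --- which I expect to be the main obstacle --- the plan is to reduce it to uniqueness for the forward (Fokker--Planck) equation and to solve the latter via the resolvent, as announced in the introduction. If $\mathcal{X}$ is any solution with one-dimensional laws $\mu_t$, taking expectations in the defining martingale gives the weak forward equation $\mu_t(F)=\mu_0(F)+\int_0^t\mu_s(LF)\,ds$ for every $F\in\mathcal{D}(L)$ (the integrand is bounded, so this is legitimate). Multiplying by $e^{-\lambda t}$, integrating in $t$ and using Fubini yields $\eta_\lambda\big((\lambda-L)F\big)=\mu_0(F)$, where $\eta_\lambda(\cdot):=\int_0^\infty e^{-\lambda t}\mu_t(\cdot)\,dt$. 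Because $(0,\infty)$ lies in the resolvent set and $R_\lambda(L)F^\theta=F_{\lambda,\theta}$ (Lemmas \ref{Bo1pn} and \ref{Di1lm}), inserting $F=R_\lambda(L)F^\theta$ gives $\eta_\lambda(F^\theta)=\mu_0(F_{\lambda,\theta})$, an expression determined by $\mu_0$ alone. Since $t\mapsto\mu_t(F^\theta)$ is continuous (again by the forward equation, as $F^\theta=F^{\theta_0}\in\mathcal{C}_0\subset\mathcal{D}(L)$) and its Laplace transform is now fixed for all $\lambda>0$, each $\mu_t(F^\theta)$ is uniquely determined; the separating property of $\mathcal{F}_\Theta$ (Proposition \ref{N2pn}(iii)) then forces $\mu_t$ to be unique. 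Uniqueness of the one-dimensional marginals for every initial law upgrades, via \cite[Theorem 4.4.2]{EK}, to uniqueness of all finite-dimensional marginals and to the Markov property, so the solution must be the one given by (\ref{G1}).

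For the stationary distribution I would first note that $m(\hat x)\ge m_0>0$ gives $M(x,\alpha)\ge m_0\alpha$, so $\int_0^\infty e^{-M(x,\alpha)}\,d\alpha\le 1/m_0$ and the intensity (\ref{Pi}) is a genuine Radon measure; hence $\pi_\varrho$ is a well-defined element of $\mathcal{P}(\widehat{\Gamma})$. To check stationarity it suffices, by separation, to verify $\int p^{\hat\gamma}_s(F^\theta)\,\pi_\varrho(d\hat\gamma)=\pi_\varrho(F^\theta)$, which by the identity of the first paragraph and (\ref{N7j}) reduces to $\varrho_s(\theta)+\varrho(\theta_s)=\varrho(\theta)$. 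This in turn follows from $\theta_s(x,\alpha)e^{-M(x,\alpha)}=\theta(x,\alpha+s)e^{-M(x,\alpha+s)}$ (read off from (\ref{Q8})) and the splitting $\int_0^\infty=\int_0^s+\int_s^\infty$ after the substitution $\beta=\alpha+s$. For uniqueness of the invariant law, any stationary $\mu$ satisfies $\mu(F^\theta)=\exp(\varrho_t(\theta))\,\mu(F^{\theta_t})$ for all $t$; I would let $t\to\infty$, using $\varrho_t(\theta)\to\varrho(\theta)$ and the pointwise convergence $F^{\theta_t}(\hat\gamma)\to1$. The latter I would establish from (\ref{JJ10}), (\ref{JJ8a}) and (\ref{N11jj}): these give $g_t(\hat x)\le e^{-m_0 t}g(x,0)/c_0$ with $c_0:=(1-e^{-J_\theta})/J_\theta$, whence $\sum_{\hat x\in\hat\gamma}g_t(\hat x)\le e^{-m_0 t}\Psi_0(\hat\gamma)/c_0\to0$, where $\Psi_0(\hat\gamma):=\sum_{\hat x\in\hat\gamma}g(x,0)<\infty$ because $g(\cdot,0)$ is compactly supported and $\hat\gamma$ is locally finite with finite marks. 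Dominated convergence ($0<F^{\theta_t}\le1$) then yields $\mu(F^{\theta_t})\to1$, hence $\mu(F^\theta)=e^{\varrho(\theta)}=\pi_\varrho(F^\theta)$, and separation gives $\mu=\pi_\varrho$.

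Finally, for temporal ergodicity I would start from an arbitrary initial law and use (\ref{Q10}): $\mu_t=\pi_t\star\mu^t$, so by (\ref{N11j}), (\ref{N7j}) and (\ref{Q7}),
\begin{equation*}
\mu_t(F^\theta)=\exp\big(\varrho_t(\theta)\big)\,\mu(F^{\theta_t}).
\end{equation*}
The two limits already isolated, $\varrho_t(\theta)\to\varrho(\theta)$ and $\mu(F^{\theta_t})\to1$, give $\mu_t(F^\theta)\to e^{\varrho(\theta)}=\pi_\varrho(F^\theta)$ for every $\theta\in\Theta$; since $\mathcal{F}_\Theta$ is convergence determining (Proposition \ref{N2pn}(iv)), this is exactly $\mu_t\Rightarrow\pi_\varrho$. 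The delicate points throughout are the forward-equation uniqueness argument and the invocation of \cite[Theorem 4.4.2]{EK} to pass from one-dimensional to finite-dimensional laws; the remaining steps are computations with the explicit formulae already assembled in the paper.
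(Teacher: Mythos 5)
Your proposal is correct and follows essentially the same route as the paper: existence via the explicit transition function (\ref{Q11}), uniqueness by reducing to the Fokker--Planck equation and pinning down the Laplace transform $\int_0^\infty e^{-\lambda s}\mu_s(F^\theta)\,ds=\mu_0(F_{\lambda,\theta})$ through the resolvent identity (\ref{Di25}) before invoking the Ethier--Kurtz one-dimensional-to-finite-dimensional upgrade, and stationarity/ergodicity via the convergence-determining property of $\mathcal{F}_\Theta$. The only differences are presentational (direct Fubini instead of integration by parts to reach the Laplace identity, and a more explicit verification of $\varrho_s(\theta)+\varrho(\theta_s)=\varrho(\theta)$ and of $\mu(F^{\theta_t})\to 1$ than the paper's terse treatment around (\ref{Li})).
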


\section{The Proof}

The proof of Theorem \ref{1tm} is divided into the following steps:
\begin{itemize}
\item[(a)] Proving uniqueness.
\item[(b)] Showing the stationarity and ergodicity if
$m( \hat{x}) \geq m_* >0$.
\end{itemize}
The realization of (a) is based on the Fokker-Planck equation for
$L$, which is a weak version of the forward Kolmogorov equation.

\subsection{The Fokker-Planck equation}

The Fokker-Planck equation is in a sense dual to the Kolmogorov
equation (\ref{KE}), see \cite{FPER} for a general results on such
equations. It describes the evolution of states of our model and
reads
\begin{equation}
  \label{FPE}
  \mu_t (F) = \mu_0 (F) + \int_0^t \mu_s ( LF) d s, \qquad \mu_0 \in \mathcal{P}(\widehat{\Gamma}).
\end{equation}
\begin{definition}
  \label{N4adf}
By a solution of (\ref{FPE}) we understand a map $\mathds{R}_{+}\ni
t \mapsto \mu_t \in \mathcal{P}(\widehat{\Gamma})$ possessing the
following properties: (a) for each $F\in C_{\rm
b}(\widehat{\Gamma})$, the map $\mathds{R}_{+}\ni t \mapsto \mu_t
(F) \in \mathds{R}$ is measurable; (b) the equality in (\ref{FPE})
holds for all  $F\in \mathcal{D}(L)$.
\end{definition}
Note that, for each solution $\mu_t$ and $F\in \mathcal{D}(L)$, the
map $\mathds{R}_{+}\ni t \mapsto \mu_t (F)$ is Lipschitz-continuous.
Hence, it is almost everywhere differentiable, and its derivative is
$\mu_t (LF)$. Therefore
\begin{equation*}
\mu_{t_2} (F) = \mu_{t_1} (F) + \int_{t_1}^{t_2} \mu_s ( LF) d s,
\qquad 0\leq t_1 < t_2.
\end{equation*}
It turns out that, for our model, one can construct solutions of
(\ref{FPE}) explicitly, which we are going to realize now.
\begin{lemma}
  \label{Q3lm}
For each $\mu_0 \in \mathcal{P}(\widehat{\Gamma})$, the map
$t\mapsto \mu_t$ defined in (\ref{Q10}) is a unique solution of
(\ref{FPE}).
\end{lemma}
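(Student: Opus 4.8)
The plan is to establish existence and uniqueness by separate arguments. Existence will come from recognizing that the measures in (\ref{Q10}) are precisely the dual action of the Kolmogorov semigroup $\{S(t)\}_{t\geq 0}$ of Theorem \ref{i1tm} on the initial state; uniqueness will be obtained from the Laplace transform in $t$ combined with the resolvent of $L$ furnished by Lemma \ref{Bo1pn}.

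For existence, the first step is the key identity $\mu_t(F^\theta)=\mu_0\big(S(t)F^\theta\big)$. Writing $\mu_t=\pi_t\star\mu^t$ and using the product rule (\ref{N11j}) gives $\mu_t(F^\theta)=\pi_t(F^\theta)\,\mu^t(F^\theta)$; evaluating the Poisson factor by (\ref{N7j}) with the intensity (\ref{Q9}) and the marked factor by (\ref{Q7}), and comparing the result with the explicit Kolmogorov solution (\ref{Di7}), yields exactly $\mu_t(F^\theta)=\mu_0(F^\theta_t)=\mu_0(S(t)F^\theta)$ (here $\mu_0=\mu$, since $\pi_0=\delta_\varnothing$ and $\mu^0=\mu$). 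The same computation, now invoking the flow property (\ref{DI}) and the explicit form (\ref{i7}), gives $\mu_t(F^{\theta_s})=\mu_0(S(t)F^{\theta_s})$ for every generator $F^{\theta_s}$ of $\mathcal{C}_0$; by linearity and $\|\cdot\|$-continuity of both sides this extends to $\mu_t(G)=\mu_0(S(t)G)$ for all $G\in\mathcal{C}$. To verify (\ref{FPE}), fix $F\in\mathcal{D}(L)$: then $s\mapsto S(s)F$ is a $\mathcal{C}$-valued $C^1$ map with derivative $S(s)LF$, so applying the bounded functional $\mu_0$ and integrating gives
\[
\mu_t(F)=\mu_0\big(S(t)F\big)=\mu_0(F)+\int_0^t \mu_0\big(S(s)LF\big)\,ds=\mu_0(F)+\int_0^t \mu_s(LF)\,ds,
\]
where the last equality uses the extended identity with $G=LF\in\mathcal{C}$. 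This is (\ref{FPE}), and property (a) of Definition \ref{N4adf} follows from the continuity of $t\mapsto\pi_t$ and $t\mapsto\mu^t$ built into (\ref{Q10}).

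For uniqueness, let $t\mapsto\nu_t$ be any solution of (\ref{FPE}) with $\nu_0=\mu_0$, and fix $\lambda>0$. Since each $\nu_t$ is a probability measure, $t\mapsto\nu_t(G)$ is bounded for $G\in C_{\rm b}(\widehat{\Gamma})$, so the Laplace transform $\widehat{\nu}_\lambda(G):=\int_0^{+\infty}e^{-\lambda t}\nu_t(G)\,dt$ defines a bounded linear functional. Transforming (\ref{FPE}) in $t$ (using the rule for the Laplace transform of $\int_0^t(\cdot)\,ds$) gives, for each $F\in\mathcal{D}(L)$, the relation $\widehat{\nu}_\lambda\big((\lambda-L)F\big)=\nu_0(F)$. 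By Lemma \ref{Bo1pn} the interval $(0,+\infty)$ lies in the resolvent set, so $(\lambda-L)$ maps $\mathcal{D}(L)$ bijectively onto $\mathcal{C}$; writing $F=R_\lambda(L)G$ we obtain $\widehat{\nu}_\lambda(G)=\nu_0\big(R_\lambda(L)G\big)$ for all $G\in\mathcal{C}$. The right-hand side depends only on $\nu_0=\mu_0$, so $\widehat{\nu}_\lambda$ and $\widehat{\mu}_\lambda$ coincide on $\mathcal{C}$, in particular on every $F^\theta$. Since $F^\theta\in\mathcal{C}_0\subset\mathcal{D}(L)$, both $t\mapsto\nu_t(F^\theta)$ and $t\mapsto\mu_t(F^\theta)$ are (Lipschitz-)continuous, so equality of their Laplace transforms for all $\lambda>0$ forces $\nu_t(F^\theta)=\mu_t(F^\theta)$ for every $t\geq 0$ and $\theta\in\Theta$. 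As $\mathcal{F}_\Theta$ is separating by Proposition \ref{N2pn}, this yields $\nu_t=\mu_t$ for all $t$.

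I expect the uniqueness step to be the main obstacle rather than existence. The delicate points there are: closing the identity $\widehat{\nu}_\lambda((\lambda-L)F)=\nu_0(F)$ from the generating set $\mathcal{D}_0(L)$ to the whole graph-norm domain $\mathcal{D}(L)$ of Definition \ref{N4df}; justifying the Laplace-transform manipulation (which is licit because $|\nu_t(G)|\leq\|G\|$ and $t\mapsto\nu_t(G)$ is measurable); and using that $R_\lambda(L)$ surjects onto $\mathcal{C}$, so that the determining formula $\widehat{\nu}_\lambda(G)=\nu_0(R_\lambda(L)G)$ holds on the full separating class $\mathcal{F}_\Theta$. The existence half, by contrast, is essentially the algebraic matching of (\ref{Di7})--(\ref{i7}) against the product formula (\ref{N11j}).
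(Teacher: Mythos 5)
Your proposal is correct and follows essentially the same route as the paper: the core of existence is the identity $\mu_t(F^{\theta_s})=\mu_0(F^{\theta_s}_t)$ obtained by matching the Poisson factor (\ref{N7j})--(\ref{Q9}) and the product rule (\ref{N11j}) against (\ref{Di7}), and uniqueness is the Laplace transform of (\ref{FPE}) combined with the resolvent identity $LF_{\lambda,\theta}=\lambda F_{\lambda,\theta}-F^\theta$ and the separating property of $\mathcal{F}_\Theta$. The only cosmetic difference is that the paper verifies (\ref{FPE}) by a direct Fubini computation on the functions $F_{\lambda,\theta}$ rather than phrasing the argument through the semigroup $S(t)$ and its $C^1$ orbits, and it derives the transform relation by integration by parts and letting $t\to+\infty$ rather than by transforming the integral equation directly.
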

\begin{proof}
For each $\theta\in \Theta$,  the map $t\mapsto \mu^t(F^\theta)=
\mu_0(F^{\theta_t})$ is continuous (by the dominated convergence
theorem) and hence measurable. By (\ref{N11j}) we then have
\begin{eqnarray*}
 & & \mu_t (F^\theta) =  \pi_t (F^\theta)\mu^t(F^\theta) = \pi_t
 (F^\theta)\mu_0(F^{\theta_t}) \\[.2cm] \nonumber & & \quad  =  \exp\left(
\int_0^t \left[ \int_X \theta (x,\alpha)e^{-M(x,\alpha)} \chi( d x)
\right] d \alpha \right) \mu_0(F^{\theta_t}).
\end{eqnarray*}
Thus, the map $t\mapsto \mu_t(F^\theta)$ is continuous and hence
measurable. Then the measurability of $t\mapsto \mu_t(F)$ for all
$F\in C_{\rm b} (\widehat{\Gamma})$ follows by claim (ii) of
Proposition \ref{N2pn}. Now we turn to proving the equality in
(\ref{FPE}) for $F=F_{\lambda, \theta}$, $\theta\in \varTheta$, see
Definition \ref{N4df}. By (\ref{Q10}) and (\ref{Di7}) for $s, t\geq
0$ and $\theta\in \varTheta$, we have
\begin{gather*}
 \mu_t (F^\theta_s) = \exp\left(\int_0^s \left( \int_X
\theta_\alpha (x,0) \chi(d x)\right) d \alpha \right) \pi_t
(F^{\theta_s}) \mu^t
(F^{\theta_s}) \\[.2cm] \nonumber = \exp\left(\int_0^s \left( \int_X \theta_\alpha
(x,0) \chi(d x)\right) d \alpha + \int_0^t \left( \int_X
\theta_{s+\alpha} (x,0) \chi(d x)\right) d \alpha  \right)
\mu_0(F^{\theta_{s+t}}) \\ \nonumber  = \exp\left(\int_0^{s+t}
\left( \int_X \theta_\alpha (x,0) \chi(d x)\right) d \alpha \right)
\mu_0(F^{\theta_{s+t}}) = \mu_0 (F^\theta_{s+t}).
\end{gather*}
Then by Fubini's theorem and the latter fact we get
\begin{eqnarray*}
& & \mu_t (F_{\lambda, \theta}) - \mu_0 (F_{\lambda, \theta}) =
\int_0^{+\infty} e^{-\lambda s} \left[\mu_t(F^\theta_s) -
\mu_0(F^\theta_s)  \right] d s\\[.2cm] \nonumber & & \ \   = \int_0^{+\infty} e^{-\lambda s}
\mu_0 (F^\theta_{t+s} - F^\theta_s) d s = \int_0^{+\infty}
e^{-\lambda s} \int_0^t \frac{\partial}{\partial u} \mu_0
(F^\theta_{s+u} ) d s d u \\[.2cm] \nonumber & & \ \ = \int_0^{+\infty}
e^{-\lambda s} \int_0^t \frac{\partial}{\partial s} \mu_u
(F^\theta_{s} ) d s d u = \int_0^t \mu_u \left(\int_0^{+\infty}
e^{-\lambda s} \frac{\partial}{\partial s} F^\theta_s d s \right) d
u \\[.2cm] \nonumber & & \ \ = \int_0^t \mu_u  (L
F_{\lambda,\theta}) d u ,
\end{eqnarray*}
where we have used also (\ref{Di25a}). Now we prove uniqueness by
applying arguments similar to those used in \cite[Lemma 2.11]{CK}.
Assume that a map $t\mapsto \mu_t$ satisfies condition (a)
Definition \ref{N4adf} and $F,G \in C_{\rm b}(\widehat{\Gamma})$ are
such that
\begin{equation*}
  \mu_t (F) - \mu_0(F) = \int_0^t \mu_s (G) d s,
\end{equation*}
holding for all $t\geq 0$. Then the map $t\mapsto \mu_t(F)$ is
almost everywhere differentiable and
\[
d \mu_t (F) = \mu_t(G) d t.
\]
Then integrating by parts we get
\begin{gather*}
- \lambda \int_0^t e^{-\lambda s} \mu_s (F) d s = e^{-\lambda t}
\mu_t (F) - \mu_0(F) - \int_0^t e^{-\lambda s} \mu_s (G) d s,
\end{gather*}
which yields
\[
\mu_0 (F) =  e^{-\lambda t} \mu_t (F) + \int_0^t e^{-\lambda s}
\left[ \lambda \mu_s(F) - \mu_s (G)\right] ds,
\]
holding for all $t,\lambda >0$. Passing here to the limit $t\to
+\infty$, for $F= F_{\lambda , \theta}$ and $G = L F_{\lambda ,
\theta}$, see (\ref{FPE}), we arrive at
\begin{equation}
  \label{Di28}
\mu_0 (F_{\lambda , \theta}) = \int_0^{+\infty} e^{-\lambda s} \mu_s
(\lambda F_{\lambda , \theta} - L F_{\lambda , \theta}) d s =
\int_0^{+\infty} e^{-\lambda s}\mu_s (F^\theta) d s,
\end{equation}
see (\ref{Di25}). Assume now that (\ref{FPE}) has two solutions,
$\mu_t$ and $\tilde{\mu}_t$, satisfying the same initial condition
$\mu_t|_{t=0}=\tilde{\mu}_t|_{t=0} = \mu_0$. By (\ref{Di28}) the
Laplace transforms of both maps $t \mapsto \mu_t (F^\theta)$ and $t
\mapsto \tilde{\mu}_t (F^\theta)$ coincide, which yields $\mu_t
(F^\theta)= \tilde{\mu}_t (F^\theta)$ holding for each $t$ and all
$F^\theta$, $\theta \in \Theta$. Then the uniqueness in question
follows by Proposition \ref{N2pn}. This completes the whole proof.
\end{proof}

\subsection{Completing the proof}

The existence of a solution of the martingale problem for
$(L,\mathcal{D}(L))$ was shown by the very construction of the
finite-dimensional marginals of $\mathcal{X}$ in (\ref{G1}). To
prove uniqueness we use the following fact, see \cite[Proposition
4.2, page 184]{EK}. Given $\mu\in \mathcal{P}(\widehat{\Gamma})$,
let $\mathcal{X}$ and $\mathcal{X}'$ be solutions of the martingale
problem for $(L,\mathcal{D}(L),\mu)$ whose one-dimensional
marginals, $\mu_t$ and $\mu'_t$, coincide for all $t\geq0$. Then all
their finite-dimensional marginals coincide and hence the problem is
well-posed. Clearly, both $\mu_t$ and $\mu'_t$ solve the
Fokker-Planck equation with the initial condition $\mu$. Then they
coincide by Lemma \ref{Q3lm}. This yields well-posedness.

Now we show the stated ergodicity. If $m$ satisfies $m(\hat{x})\geq
m_0
>0$, then $M(x,\alpha) \geq m_0 \alpha$, see (\ref{Q8}), which for $\theta$ as in (\ref{Q8}) yields
\begin{equation*}
\forall t >0 \qquad |\theta_t (\hat{x})| \leq  e^{- tm_0}.
\end{equation*}
By the continuity of the map $t \mapsto \mu(F^{\theta_t})$ we then
get
\begin{eqnarray}
  \label{Li}
  \mu_t (F^\theta)& = & \exp\left( \int_0^t \left[ \theta_\alpha (x,0) \chi(d x)\right] d \alpha\right)
\mu(F^{\theta_t}) \\[.2cm] \nonumber
  & \to &\exp\left( \int_{0}^{+\infty} \left[ \int_X \theta_\alpha (x,0) \chi(d x)\right]d \alpha
  \right), \quad t \to +\infty.
\end{eqnarray}
By claim (iv) of Proposition \ref{N2pn} this yields $\mu_t
\Rightarrow \pi_\varrho$, see (\ref{Pi}), holding for each initial
$\mu\in \mathcal{P}(\widehat{\Gamma})$. Clearly, $\mu_t =
\pi_\varrho$ if $\mu = \pi_\varrho$, which means that $\pi_\varrho$
is a stationary state. If there exists another stationary state, say
$\mu'$, then (\ref{Li}) fails to hold for $\mu=\mu'$, which
contradicts the convergence just established. This completes the
proof of Theorem \ref{1tm}.

\section*{Acknowledgements}
The research of both authors was financially supported by National
Science Centre, Poland, grant 2017/25/B/ST1/00051, that is cordially
acknowledged by them.

\section*{Appendix}

Here we prove that $\tilde{r}$ introduced in (\ref{Qh}) satisfies
(\ref{QH}). During the whole proof, we deal with the function
$\ell(\alpha) = \alpha +1/\alpha \geq 2 =\ell(1)$, $\alpha>0$, which
is decreasing for $\alpha\leq 1$ and increasing for $\alpha\geq 1$.

First, consider the case of $\underline{\alpha_2=0}$, where
$\tilde{r}(\alpha_1, \alpha_2)=\omega(\alpha_1)$. For $\alpha_i \leq
1$, $i=1,3$, it follows that $\omega(\alpha_i)=\alpha_i$ and
$\tilde{r}(\alpha_1, \alpha_3) = |\alpha_3-\alpha_1|$. Hence,
(\ref{QH}) turns into
\[
\alpha_1 \leq |\alpha_3-\alpha_1| +\alpha_3,
\]
which obviously holds true. For $\alpha_3>1$,  we have the following
two possibilities: (a) $\alpha_3 - \alpha_1 \leq \alpha_1 +
1/\alpha_3$; (b) $\alpha_3 - \alpha_1 > \alpha_1 + 1/\alpha_3$. In
(a), we get
\begin{gather}
  \label{Qh2}
  \omega(\alpha_1) = \alpha_1 \leq \alpha_3 - \alpha_1 + 1/\alpha_3 =
\tilde{r}(\alpha_1, \alpha_3) + \omega(\alpha_3),
\end{gather}
which yields the inequality in question for this case since
$2\alpha_1 \leq \ell(\alpha_3)$. Note that the equality in
(\ref{Qh2}) is possible only if $\alpha_1=\alpha_3=1$. In (b), we
have $\alpha_1\leq \alpha_1 + 2/\alpha_3$, which completes the proof
of (\ref{QH}) for this case.  For $\alpha_1>1$ and $\alpha_3 \leq
1$, we have the following possibilities: (a) $\alpha_1 -\alpha_3
\leq \alpha_3 + 1/\alpha_1$; (b) $\alpha_1 -\alpha_3 > \alpha_3 +
1/\alpha_1$. In (a), we have
\begin{gather*}
\omega(\alpha_1) = 1/\alpha_1 \leq \alpha_1 = \alpha_1 - \alpha_3 +
\alpha_3,
\end{gather*}
which yields (\ref{QH}) in this case. In case (b), we have
\[
\omega(\alpha_1) = 1/\alpha_1 \leq \alpha_3 + 1/\alpha_1 + \alpha_3.
\]
It remains to consider the cases: (i) $\alpha_1 \geq \alpha_3 > 1$;
(ii) $1 <\alpha_1 < \alpha_3 $. For $\alpha_1 - \alpha_3 \leq
1/\alpha_1 + 1/\alpha_3$, in (i) we have
\[
\omega(\alpha_1) = 1/\alpha_1 \leq 1/\alpha_3 \leq \alpha_1 -
\alpha_3 +1/\alpha_3,
\]
which proves (\ref{QH}) for this case. For $\alpha_1 - \alpha_3 >
1/\alpha_1 + 1/\alpha_3$, in (i) we have
\[
\omega(\alpha_1) = 1/\alpha_1 \leq 1/\alpha_1 + 1/\alpha_3+
1/\alpha_3,
\]
which again yields (\ref{QH}). For $\alpha_3 - \alpha_1 \leq
1/\alpha_1 + 1/\alpha_3$, in (ii) we have to prove $1/\alpha_1 \leq
\alpha_3-\alpha_1 + 1/\alpha_3$, which is equivalent to
$\ell(\alpha_1) \leq \ell(\alpha_3)$. The latter follows by
$\alpha_3 \geq \alpha_1>1$ as $\ell$ is increasing. For $\alpha_3 -
\alpha_1 > 1/\alpha_1 + 1/\alpha_3$, the proof of (\ref{QH}) f is
immediate.

Now we consider the case  $\underline{0<\alpha_1 < \alpha_2}$. If
$\underline{\alpha_2\leq 1}$ and $\alpha_3=0$, then
$\alpha_2-\alpha_1 \leq \omega(\alpha_1) +\omega(\alpha_2)= \alpha_1
+\alpha_2$. For $\alpha_3\in (0,1]$, it follows that
$\tilde{r}(\alpha_3, \alpha_i)=|\alpha_3 - \alpha_i|$. Then
(\ref{QH}) turns into the triangle inequality for $|\cdot|$. The
same is true also for $\alpha_3 >1$ such that $\alpha_3 -1/\alpha_3
\leq 2 \alpha_1$. For $2\alpha_1 <\alpha_3 -1/\alpha_3 \leq 2
\alpha_2$, the right-hand side of (\ref{QH}) is $\alpha_3-\alpha_2 +
\alpha_1 + 1/\alpha_3$. Then $\alpha_2-\alpha_1 \leq {\rm
RHS}(\ref{QH})$ turns into $2(\alpha_2-\alpha_1)\leq
\ell(\alpha_3)$, which holds since $2(\alpha_2-\alpha_1)\leq  2 <
\ell(\alpha_3)$ for $\alpha_3>1$. For $2\alpha_2 <\alpha_3
-1/\alpha_3$, the right-hand side of (\ref{QH}) is $\alpha_1 +
\alpha_2+ 2/\alpha_3$, which is bigger than $\alpha_2-\alpha_1$.

Consider now $\underline{0< \alpha_1  \leq 1 < \alpha_2}$ and
$\underline{\alpha_2 - 1/\alpha_2 \leq 2 \alpha_1}$.  The latter
means that $\tilde{r}(\alpha_2,\alpha_1)= \alpha_2 - \alpha_1$. For
$\alpha_3=0$, the right-hand side of (\ref{QH}) is $\alpha_1
+1/\alpha_2$, and the latter turns into $\alpha_2 - 1/\alpha_2 \leq
2 \alpha_1$, which holds in this case. The same is true for
$2\alpha_3 \leq \alpha_2 - 1/\alpha_2$. For $\alpha_2-1/\alpha_2
\leq 2\alpha_3\leq 2\alpha_1$, the right-hand side of (\ref{QH}) is
$\alpha_2-\alpha_3 + \alpha_1-\alpha_3$, which is bigger than
$\alpha_2-\alpha_1$ as $\alpha_3\leq \alpha_1$. For $\alpha_1 \leq
\alpha_3 \leq 1$, the right-hand side of (\ref{QH}) is
$\alpha_2-\alpha_3 + \alpha_3-\alpha_1 = {\rm LHS}(\ref{QH})$, Next,
consider $1<\alpha_3\leq \alpha_2$, where $\alpha_3-1/\alpha_3 \leq
\alpha_2-1/\alpha_2 \leq 2\alpha_1$. For $\alpha_2-1/\alpha_2 \leq
\ell(\alpha_3)$, the right-hand side of (\ref{QH}) is
$\alpha_2-\alpha_3+ \alpha_3-\alpha_1 = {\rm RHS}(\ref{QH})$. The
case of $\alpha_2-1/\alpha_2 > \ell(\alpha_3)>2$ is impossible since
$\alpha_2 -1/\alpha_2\leq 2 \alpha_1 \leq 2$. For $\alpha_3 >
\alpha_2$ such that $\alpha_3 -1/\alpha_3 \leq 2 \alpha_1$ and
$\alpha_3 -1/\alpha_3 \leq \ell(\alpha_2)$, we have
$\tilde{r}(\alpha_3,\alpha_1)= \alpha_3 - \alpha_1$ and
$\tilde{r}(\alpha_3,\alpha_2)= \alpha_3 - \alpha_2$. Then the
right-hand side of (\ref{QH}) is $\alpha_3-\alpha_1 +\alpha_3
-\alpha_2 = 2\alpha_3 - (\alpha_2+\alpha_1)$, which is bigger than
$\alpha_2 - \alpha_1$. The case of $\alpha_3 -1/\alpha_3 >
\ell(\alpha_2)$ is impossible for $\alpha_3-1/\alpha_3 \leq 2
\alpha_1$. For $\alpha_3 > \alpha_2$ such that $\alpha_3 -1/\alpha_3
> 2 \alpha_1$ and $\alpha_3 -1/\alpha_3 \leq \ell(\alpha_2)$, we
have $\tilde{r}(\alpha_3,\alpha_1) = \alpha_1 + 1/\alpha_3$ and
$\tilde{r}(\alpha_3,\alpha_2) = \alpha_3 -\alpha_2$. Then the
right-hand side of (\ref{QH}) is $\ell(\alpha_3)-\alpha_2 +
\alpha_1$, which yields (\ref{QH}) in the form $2(\alpha_2-\alpha_1)
\leq \ell(\alpha_3)$. By $\alpha_2 - 1/\alpha_2 \leq 2\alpha_1$,
$\alpha_2>1$, we have that $2(\alpha_2-\alpha_1) \leq 2
\sqrt{1+\alpha_1^2}$, whereas $\alpha_3 -1/\alpha_3>2\alpha_1$
yields $\ell(\alpha_3)> \ell(\alpha_*)= 2 \sqrt{1+\alpha_1^2}$,
which proves (\ref{QH}) in this case. Here $\alpha_*$ is the
positive solution of $\alpha-1/\alpha = 2 \alpha_1$. For $\alpha_3 >
\alpha_2$ such that $\alpha_3 -1/\alpha_3
> 2 \alpha_1$ and $\alpha_3 -1/\alpha_3 > \ell(\alpha_2)$, we
have $\tilde{r}(\alpha_3,\alpha_1) = \alpha_1 + 1/\alpha_3$ and
$\tilde{r}(\alpha_3,\alpha_2) = 1/ \alpha_3 + 1/\alpha_2$. Then
(\ref{QH}) turns to
\[
\alpha_2 - \alpha_1 \leq \alpha_1 + 2/\alpha_3 + 1/\alpha_2,
\]
which holds as $\alpha_2-1/\alpha_2 \leq 2 \alpha_1$.

Consider now $\underline{0< \alpha_1  \leq 1 < \alpha_2}$ and
$\underline{\alpha_2 - 1/\alpha_2 > 2 \alpha_1}$. The latter means
that $\tilde{r}(\alpha_1,\alpha_2)=\alpha_1 + 1/\alpha_2$. For
$\alpha_3 \in [0,\alpha_1]$, we have that
$\tilde{r}(\alpha_1,\alpha_3)=\alpha_1-\alpha_3$ and
$\tilde{r}(\alpha_2,\alpha_3)=\alpha_3+1/\alpha_2$. Hence,
(\ref{QH}) turns into equality. For $\alpha_3 \in (\alpha_1, 1]$
such that $\alpha_2-1/\alpha_2 > 2\alpha_3$, we have the right-hand
side of (\ref{QH}) in  the following form $\alpha_3 -\alpha_1 +
\alpha_3 + 1/\alpha_2$, which is bigger that $\tilde{r}(\alpha_1,
\alpha_2)$ since $\alpha_1 < \alpha_3$. For $\alpha_3 \in (\alpha_1,
1]$ such that $\alpha_2-1/\alpha_2 \leq 2\alpha_3$, we have that the
right-hand side of (\ref{QH}) is $\alpha_3 -\alpha_1 + \alpha_2
-\alpha_3 = \alpha_2 - \alpha_1 > \tilde{r}(\alpha_1, \alpha_2)$.
Consider now $\alpha_3> 1$ such that $\alpha_3 -1/\alpha_3 \leq 2
\alpha_1$, which means that $\alpha_3< \alpha_2$ and
$\tilde{r}(\alpha_1, \alpha_3) = \alpha_3 - \alpha_1$. For
$\tilde{r}(\alpha_2, \alpha_3) = \alpha_2 -\alpha_3$, the right-hand
side of (\ref{QH}) is $\alpha_2 -\alpha_3 +\alpha_3-\alpha_1\geq
\tilde{r}(\alpha_2, \alpha_3)$. For $\tilde{r}(\alpha_2, \alpha_3) =
1/\alpha_3 +1/\alpha_2$, the right-hand side of (\ref{QH}) is
$\alpha_3 -\alpha_1 +1/\alpha_2 + 1/\alpha_3$; hence, (\ref{QH})
turns into $2\alpha_1 \leq \ell(\alpha_3)$, which holds since
$\alpha_1\leq 1$ and $\ell(\alpha_3)\geq 2$. For $\alpha_3 >1$ such
that $\alpha_3 -1/\alpha_3 > 2 \alpha_1$, we have that
$\tilde{r}(\alpha_1, \alpha_3) = \alpha_1 + 1/\alpha_3$. Then
(\ref{QH}) turns into
\begin{equation}
  \label{QH1}
1/\alpha_2 \leq 1/\alpha_3 + \tilde{r}(\alpha_2 , \alpha_3),
\end{equation}
which clearly holds for $\alpha_3\leq \alpha_2$, and also for
$\alpha_3
>\alpha_2$, where for $\tilde{r}(\alpha_2 , \alpha_3) =
\alpha_3-\alpha_2$ it turns into $\ell(\alpha_2) \leq
\ell(\alpha_3)$ -- which is true as $\ell(\alpha)$ is increasing for
$\alpha>1$. For $\tilde{r}(\alpha_2 , \alpha_3) = 1/\alpha_3
+1/\alpha_2$, the validity (\ref{QH1}) is immediate.

Let us consider now the case of $\underline{1<\alpha_1  < \alpha_2}$
and $\underline{\ell(\alpha_1)\leq \alpha_2 - 1/\alpha_2}$, where
$\tilde{r}(\alpha_2 , \alpha_1) = 1/\alpha_1 + 1/\alpha_2$. For
$\alpha_3 \leq 1$ such that $2\alpha_3 \leq \alpha_1 - 1/\alpha_1$,
we have $\tilde{r}(\alpha_3 , \alpha_i) = \alpha_3 + 1/\alpha_i$,
$i=1,2$. Then (\ref{QH}) obviously holds. For $\alpha_3 \leq 1$
satisfying $\alpha_1 - 1/\alpha_1 <2\alpha_3 \leq \alpha_2 -
1/\alpha_2$, we have $\tilde{r}(\alpha_3 , \alpha_1) = \alpha_1 -
\alpha_3$ and $\tilde{r}(\alpha_3 , \alpha_2) = \alpha_3
+1/\alpha_2$. Hence, (\ref{QH}) turns into equality in this case.
The remaining case $\alpha_2 - 1/\alpha_2 <2\alpha_3 \leq 2$ is
impossible since $\alpha_2 - 1/\alpha_2 \geq \ell(\alpha_1) > 2$.
For $\alpha_3>1$ such that $2\alpha_3 \leq \alpha_1 - 1/\alpha_1$,
we have $\tilde{r}(\alpha_3 , \alpha_i) = 1/\alpha_3 + 1/\alpha_i$,
$i=1,2$. Then (\ref{QH}) obviously holds. For $\alpha_3\in (1,
\alpha_1]$ satisfying $\alpha_1 - 1/\alpha_1< 2\alpha_3 \leq
\alpha_2 - 1/\alpha_2$, (\ref{QH}) turns into
\[
1/\alpha_1 + 1/\alpha_2 \leq \alpha_1 - \alpha_3 +1/\alpha_2 +
1/\alpha_3,
\]
which holds for $\alpha_3\leq \alpha_1$ as the function $\alpha
-1/\alpha$ is increasing. For $\alpha_3 \in (\alpha_1, \alpha_2]$
such that $\alpha_3-1/\alpha_3 \leq \ell(\alpha_1) < \ell(\alpha_3)
< \alpha_2-1/\alpha_2$, the right-hand side of (\ref{QH}) is
$\alpha_3 - \alpha_1 + 1/\alpha_3 +1/\alpha_2$, and hence the latter
turns into $\ell(\alpha_1) < \ell(\alpha_3)$. For $\alpha_3 \in
(\alpha_1, \alpha_2]$ satisfying $\alpha_3-1/\alpha_3 \leq
\ell(\alpha_1)$ and $\alpha_2-1/\alpha_2 \leq \ell(\alpha_3)$, we
have (\ref{QH}) in the form $1/\alpha_2 +1/\alpha_1 \leq \alpha_2
-\alpha_3 +\alpha_3 -\alpha_1$, which holds as $\ell(\alpha_1) <
\alpha_2-1/\alpha_2$. For $\alpha_3 \in (\alpha_1, \alpha_2]$
satisfying $\ell(\alpha_1)<\alpha_3-1/\alpha_3$ and
$\alpha_2-1/\alpha_2 \leq \ell(\alpha_3)$, the right-hand side of
(\ref{QH}) is $1/\alpha_1 + 1/\alpha_3 + \alpha_2 - \alpha_3$ which
is bigger than $\tilde{r}(\alpha_1 , \alpha_2)$ as $\alpha -
1/\alpha$ is increasing. For $\alpha_3>\alpha_2$ such that $\alpha_3
-1/\alpha_3 \leq \ell(\alpha_2)$, the right-hand side of (\ref{QH})
is $1/\alpha_1 + 1/\alpha_3 + \alpha_3 -\alpha_2$. Hence, (\ref{QH})
holds as $\ell(\alpha_2) < \ell(\alpha_3)$. For $\alpha_3
-1/\alpha_3 > \ell(\alpha_2)$,  (\ref{QH}) turns into $1/\alpha_1 +
1/\alpha_2\leq  1/\alpha_1 + 1/\alpha_3 + 1/\alpha_3 +1/\alpha_2$.

Finally, let us consider the case of $\underline{1<\alpha_1  <
\alpha_2}$ and $\underline{\ell(\alpha_1)> \alpha_2 - 1/\alpha_2}$,
where $\tilde{r}(\alpha_2 , \alpha_1) = \alpha_2-\alpha_1$. For
$\alpha_3\leq 1$ such that $2\alpha_3 \leq \alpha_1-1/\alpha_1$, it
follows that $\tilde{r}(\alpha_i,\alpha_3)= \alpha_3 +1/\alpha_i$,
$i=1,2$. Then (\ref{QH}) turns into $\alpha_2 - 1/\alpha_2 \leq
\ell(\alpha_1) + 2 \alpha_3$, which evidently holds in this case.
For $\alpha_1 - 1/\alpha_1 < 2 \alpha_3 \leq \alpha_2 - 1/\alpha_2$,
we have that $\tilde{r}(\alpha_1 , \alpha_3) = \alpha_1-\alpha_3$
and $\tilde{r}(\alpha_2 , \alpha_3) = \alpha_3 + 1/\alpha_2$. Then
(\ref{QH}) turns into $\alpha_2 - 1/\alpha_2 \leq 2 \alpha_1$, which
is the case for $2\alpha_1 > \ell(\alpha_1) > \alpha_2 -
1/\alpha_2$. For $\alpha_3\leq 1$ satisfying $2\alpha_3 > \alpha_2 -
1/\alpha_2$, it follows that $\tilde{r}(\alpha_i,\alpha_3)= \alpha_i
-\alpha_3$, $i=1,2$. Then (\ref{QH}) turns into $2 \alpha_3 \leq 2
\alpha_1$. which is obviously the case. For $\alpha_3 \in
(1,\alpha_1]$ such that $\ell(\alpha_3) \leq \alpha_1-1/\alpha_1$,
we have that $\tilde{r}(\alpha_i,\alpha_3)= 1/\alpha_i +1/\alpha_3$,
$i=1,2$. Then (\ref{QH}) amounts to $\alpha_2 - 1/\alpha_2 \leq
\ell(\alpha_1) + 2 /\alpha_3$, which obviously holds. For
$\alpha_1-1/\alpha_1<\ell(\alpha_3)\leq  \alpha_2-1/\alpha_2$, we
have that $\tilde{r}(\alpha_1,\alpha_3)= \alpha_1 -\alpha_3$ and
$\tilde{r}(\alpha_2,\alpha_3)= 1/\alpha_2 +1/\alpha_3$. Then
(\ref{QH}) amounts to $\alpha_1 - \alpha_3 + \alpha_1 +
1/\alpha_3\geq \alpha_2 - 1/\alpha_2$, which is the case for
$\alpha_1 - \alpha_3 + \alpha_1 + 1/\alpha_3\geq \alpha_1 +
1/\alpha_3 \geq \ell(\alpha_1) > \alpha_2 -1/\alpha_2$. For
$\alpha_2-1/\alpha_2<\ell(\alpha_3)$, we have
$\tilde{r}(\alpha_i,\alpha_3)= \alpha_i - \alpha_3$, $i=1,2$. Then
(\ref{QH}) is $\alpha_2 - \alpha_1 \leq \alpha_2-\alpha_3 + \alpha_1
- \alpha_3$, which obviously holds as $\alpha_3 \leq \alpha_1$. Now
we consider $\alpha_3 \in (\alpha_1, \alpha_2]$. For $\alpha_3
-1/\alpha_3 \leq \ell(\alpha_1)$, we have
$\tilde{r}(\alpha_1,\alpha_3)= \alpha_3 -\alpha_1$ and
$\tilde{r}(\alpha_2,\alpha_3)= \alpha_2 -\alpha_3$, which yields
equality in (\ref{QH}). Recall that $\alpha_2 - 1/\alpha_2 <
\ell(\alpha_1)$; hence, $\alpha_3 - 1/\alpha_3 > \ell(\alpha_1)$ is
impossible for $\alpha_3 \leq \alpha_2$. It remains to consider
$\alpha_3 > \alpha_2$. For $\alpha_3 -1/\alpha_3\leq
\ell(\alpha_1)$, we have that $\tilde{r}(\alpha_i,\alpha_3)=
\alpha_3 -\alpha_i$, $i=1,2$. Then (\ref{QH}) takes the form
$\alpha_2 - \alpha_1 \leq 2 \alpha_3 - \alpha_1 - \alpha_2$, which
obviously holds in this case. For $\ell(\alpha_1)< \alpha_3
-1/\alpha_3\leq \ell(\alpha_2)$, (\ref{QH}) amounts to $2\alpha_2
\leq \ell(\alpha_3) +  \ell(\alpha_1)$, which holds as
$\ell(\alpha_1)> \alpha_2 - 1/\alpha_2$ (assumed) and
$\ell(\alpha_3)> \alpha_2 + 1/\alpha_2$ for $\alpha_3>\alpha_2$. For
$\alpha_3 -1/\alpha_3> \ell(\alpha_2)$, we have that
$\tilde{r}(\alpha_3, \alpha_i) = 1/\alpha_3 + 1/\alpha_i$, $i=1,2$.
Then (\ref{QH}) turns into
\[
\alpha_2 - \alpha_1 \leq 2/\alpha_3 + 1/\alpha_2 +1/\alpha_1,
\]
which holds since $\ell(\alpha_1) > \alpha_2 -1/\alpha_2$. This
completes the whole proof.

\end{document}